\newtheorem{thm}{Theorem}[section]
\newtheorem*{thm*}{Theorem}
\newtheorem{cor}[thm]{Corollary}
\newtheorem*{cor*}{Corollary}
\newtheorem{prop}[thm]{Proposition}
\newtheorem*{prop*}{Proposition}
\newtheorem{lem}[thm]{Lemma}
\newtheorem*{thmA}{Theorem A}
\newtheorem*{thmB}{Theorem B}
\newtheorem*{thmC}{Theorem C}
\newtheorem*{thmE}{Theorem E}
\newtheorem*{corD}{Corollary D}
\newtheorem*{thmF}{Theorem F}
\theoremstyle{definition}
\newtheorem{definition}[thm]{Definition}
\newtheorem{rmk}[thm]{Remark}
\newtheorem*{rmk*}{Remark}
\newtheorem*{convention*}{Convention}
\def\lquotient#1#2{%
\makeatletter
\lower.6ex\hbox{$#1$}\backslash\raise.3ex\hbox{$#2$}%
\makeatother
}
\def\rquotient#1#2{%
\makeatletter
\raise.6ex\hbox{$#1$}/\lower.2ex\hbox{$#2$}%
\makeatother
}
\newcommand{\bbR}{{\mathbb R}}
\newcommand{\bbZ}{{\mathbb Z}}
\newcommand{\ra}{\rightarrow}
\newcommand{\hra}{\hookrightarrow}
\newcommand{\subjclass}[2][2010]{%
  \let\@oldtitle\@title%
  \gdef\@title{\@oldtitle\footnotetext{#1 \emph{Mathematics subject classification.} #2}}%
}
\newcommand{\keywords}[1]{%
  \let\@@oldtitle\@title%
  \gdef\@title{\@@oldtitle\footnotetext{\emph{Key words and phrases.} #1.}}%
}
\newcommand{\Address}{{
  \bigskip
  \small

   \textsc{Department of Mathematics, University of Vienna,
Oskar-Morgenstern-Platz 1, 1090 Vienna, Austria.}\par\nopagebreak
  \textit{E-mail address}: \texttt{alexandre.martin@univie.ac.at}

}}
\newdimen\bibindent
\title{\textbf{On the cubical geometry of Higman's group}}  
\author{Alexandre Martin}
\date{}
\subjclass{Primary 20F65; Secondary 20F28}
\keywords{Higman group, CAT(0) cube complexes, automorphism group, hopfian groups, co-hopfian groups, Tits alternative}
\begin{document}
\maketitle

\begin{abstract} 
We investigate the cocompact action of Higman's group on a CAT(0) square complex associated to its standard presentation. We show that this action is in a sense intrinsic, which allows for the use of geometric techniques to study the endomorphisms of the group, and show striking similarities  with mapping class groups of hyperbolic surfaces, outer automorphism groups of free groups and linear groups over the integers. We compute explicitly the automorphism group and outer automorphism group of Higman's group, and show that the group is both hopfian and co-hopfian. We actually prove a stronger rigidity result about the endomorphisms of Higman's group: Every non-trivial morphism from the group to itself is an automorphism. We also study the geometry of the action and prove a surprising result: Although the CAT(0) square complex acted upon contains uncountably many flats, the Higman group does not contain subgroups isomorphic to $\bbZ^2$. Finally, we show that this action possesses features reminiscent of negative curvature, which we use to prove a refined version of the Tits alternative for Higman's group.
\end{abstract}

\medskip

Higman's group was constructed in \cite{HigmanGroup} as the first example of a finitely presented infinite group without non-trivial finite quotients. It is  given by the following presentation: 

$$H:= \langle a_i, i \in \bbZ / 4 \bbZ ~|~ a_ia_{i+1}a_i^{-1}= a_{i+1}^{2}, i \in \bbZ / 4 \bbZ  \rangle.$$

If some partial results about Higman's group are known, the structure of the group and its geometry have remained mostly elusive. The group is known to split as an amalgamated product over several non-abelian free subgroups, and such splittings can be used to derive various properties of the group. For instance, the group has finite asymptotic dimension by work of Bell--Dranishnikov \cite{BellDranishnikovHurewicz}, a property that implies the Novikov conjecture and the coarse Baum--Connes conjecture for the group \cite{YuNovikov, YuCoarseBaumConnes}. Such splittings have also been used to show that the Higman group admits in a sense many quotients. Schupp developed small cancellation tools to show that the group is SQ-universal \cite{SchuppHigmanSQ}, that is, every countable group embeds in a quotient of $H$. Minasyan--Osin   used these splittings to show that the group is acylindrically hyperbolic \cite{MinasyanOsinTrees}, a property which has gained a lot of attention in recent years due to the wide 
variety of groups to 
which it applies (see \cite{OsinAcylindricallyHyperbolic} and examples therein). 

Since its construction, the Higman group has mostly been used as a source of counterexamples in group theory. For instance, Platonov--Tavgen used it to construct a counterexample related to a question of Grothendieck on profinite completions \cite{PlatonovTavgenGrothendieck}. The Higman group has also gained attention in recent years following work of Gromov on sofic groups \cite{GromovSoficGroups}, in relation with the Surjunctivity Conjecture \cite{GottschalkSurjunctivity}. The question of whether every group is sofic is still open and is a topic of active research. As residually finite groups are known to be sofic, a potential counterexample has to be searched for among the rare instances of non-residually finite groups at hand, and Higman's group appears in the list of possible candidates, perhaps even more so due to its status of ``pathological'' group.

The goal of this article is to show that, far from being part of a \textit{mus\'ee des horreurs} of group theory, Higman's group possesses a rich structure and a beautiful geometry. The structure of the group turns out to show intriguing similarities with mapping class groups of hyperbolic surfaces, outer automorphisms of free groups, and linear groups over the integers, three classes of groups whose intertwined geometry has been a driving force in geometric group theory in recent years.\\

The main object of study of this article is the  action of Higman's group on a CAT(0) square complex naturally associated to its standard presentation. The action on that complex is cocompact but not proper: Stabilisers of vertices are isomorphic to the solvable Baumslag--Solitar group $BS(1,2)$ and stabilisers of edges are infinite cyclic. In particular, the complex is not locally finite, as links of vertices and edges are infinite. Although such an action was already known to exist (see for instance \cite[Example II.12.17.(6)]{BridsonHaefliger}), this article is, to the author's knowledge, the first to investigate its geometry in a systematic way. 

In this article,  we show that a completely new phenomenon occurs: While the aforementioned actions of $H$ on trees all depended on a choice of a splitting associated to its standard presentation, the action of $H$ on this CAT(0) square complex $X$ turns out to be \textit{intrinsic}, in that it can be constructed directly from the group without any reference to a presentation, as we now explain. First notice that the $1$-skeleton of $X$ can be made into a directed graph by orienting each edge towards the unique vertex in which the edge stabiliser is undistorted. Now let $\Gamma$ be the simplicial graph whose vertices are maximal subgroups of $H$ isomorphic to the Baumslag--Solitar group $BS(1,2)$, and such that two vertices are joined by an edge precisely when the two associated subgroups have a non-trivial intersection. The action of $H$ on itself by conjugation induces a left action of $H$ on $\Gamma$. Let $\Gamma_{\mathrm{dist}}$ be the subgraph of $\Gamma$ with the same vertex set, and such that an edge of $\Gamma$ belongs to $\Gamma_{\mathrm{dist}}$  if and only if it corresponds to two subgroups $P$, $P'$ such that the intersection $P\cap P'$ is distorted in exactly of the subgroups $P$, $P'$. An edge of $\Gamma_{\mathrm{dist}}$ between subgroups $P, P'$ is oriented towards the unique vertex in which $P'\cap P$ is undistorted.

\begin{thmA}\label{thm:main_intrinsic}
 The directed graph $\Gamma_{\mathrm{dist}}$ with its $H$-action is equivariantly isomorphic to the directed $1$-skeleton of the CAT(0) square complex $X$.
\end{thmA}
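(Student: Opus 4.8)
The strategy is to produce an explicit $H$-equivariant bijection between the vertex set of $\Gamma_{\mathrm{dist}}$ and the vertex set of $X$, then check that it carries edges to edges in an orientation-preserving way. Since the action of $H$ on $X$ is cocompact with vertex stabilisers isomorphic to $BS(1,2)$, there are finitely many orbits of vertices, and each orbit corresponds to a conjugacy class of such a stabiliser. The first step is therefore to understand the vertex stabilisers of $X$ concretely: from the standard presentation, $X$ has (up to the $H$-action) four vertex orbits, whose stabilisers are the subgroups $P_i := \langle a_{i-1}, a_i\rangle \cong BS(1,2)$ generated by consecutive generators (with the relation $a_{i-1}a_i a_{i-1}^{-1}=a_i^2$). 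I would first establish that these four subgroups, and their $H$-conjugates, are exactly the \emph{maximal} subgroups of $H$ isomorphic to $BS(1,2)$ — this is the crux of the whole identification and I expect it to be the main obstacle. Maximality should follow from the malnormality-type properties of the vertex groups with respect to the edge groups in the underlying graph-of-groups decomposition, together with the observation that any $BS(1,2)$ subgroup acts on the tree of one of the splittings either elliptically (hence conjugate into a $P_i$) or with an unexpected fixed-point behaviour that can be ruled out because $\bbZ^2$ does not embed (or, more elementarily, because $BS(1,2)$ is not free and cannot act freely on a tree). One then needs that two distinct vertices of $X$ always have distinct stabilisers, which follows because the action on $X$ is, in the relevant sense, ``rigid'': the fixed-point set of $P_i$ in $X$ is a single vertex. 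This last point can be extracted from the CAT(0) geometry — a $BS(1,2)$ subgroup fixing two vertices would fix the geodesic between them, forcing it to fix an edge, but edge stabilisers are infinite cyclic, a contradiction.

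Granting that the vertices of $\Gamma_{\mathrm{dist}}$ are precisely the conjugates of the $P_i$, equivariance of the bijection $v \mapsto \mathrm{Stab}_H(v)$ is immediate from its definition. The second step is to match edges. Two vertices $v, w$ of $X$ are joined by an edge iff $\mathrm{Stab}_H(v) \cap \mathrm{Stab}_H(w)$ contains the corresponding edge stabiliser, which is infinite cyclic; conversely I must show that if $P, P'$ are two distinct conjugates of vertex groups with $P \cap P' \neq \{1\}$, then the vertices they fix in $X$ are adjacent. For this I would again use the tree actions: a nontrivial element $g \in P \cap P'$ fixes both vertices, hence fixes the geodesic segment between them in $X$; if that segment had length $\geq 2$ it would pass through an intermediate vertex $u$ whose stabiliser is a third conjugate $P''$, and one analyses the configuration $g \in P \cap P'' $ and $g \in P'' \cap P'$ inside the appropriate graph-of-groups to derive a contradiction with distinctness — essentially, edge groups of $X$ are maximal cyclic and ``unlinked'' in a way that prevents chains. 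So adjacency in $\Gamma$ corresponds exactly to distance one in $X$.

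Finally, I must handle the passage from $\Gamma$ to the \emph{subgraph} $\Gamma_{\mathrm{dist}}$ and the orientations. For an edge of $X$ between $v$ and $w$, its stabiliser $C \cong \bbZ$ sits inside $P = \mathrm{Stab}_H(v)$ and $P' = \mathrm{Stab}_H(w)$, and since $P \cap P'$ is infinite cyclic and contains $C$, it equals $C$ up to finite index; the point is that $C$ is undistorted in exactly one of $P, P'$ — namely in the vertex at which the edge is oriented by the recipe described before Theorem A — because in $BS(1,2)=\langle t,x \mid txt^{-1}=x^2\rangle$ the subgroup $\langle x \rangle$ is exponentially distorted while $\langle t\rangle$ (and any conjugate of it) is undistorted, and the two ends of an edge of $X$ realise these two cases. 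Conversely, an edge of $\Gamma$ between non-adjacent-in-$X$ vertices (if any survived the previous step, which they should not) or an edge where $P\cap P'$ is undistorted in both, or distorted in both, is excluded. So edges of $\Gamma_{\mathrm{dist}}$ correspond precisely to edges of $X$, and the orientation of $\Gamma_{\mathrm{dist}}$ towards the undistorted vertex matches the orientation of the directed $1$-skeleton of $X$ by construction. Combining the three steps gives the desired equivariant isomorphism of directed graphs.
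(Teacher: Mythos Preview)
Your overall architecture is right, but Step~2 contains a genuine error that undermines the edge-matching. You claim that if two maximal $BS(1,2)$ subgroups $P,P'$ intersect non-trivially then the corresponding vertices of $X$ are adjacent, and you sketch an argument ruling out a geodesic of length $\geq 2$ between them. This is false: the action is only \emph{weakly} acylindrical, with stabilisers of vertices at distance $\geq 3$ intersecting trivially, but distance-$2$ pairs can and do share non-trivial elements. Concretely, in $G_{v_{i,i+1}}=\langle a_i,a_{i+1}\mid a_ia_{i+1}a_i^{-1}=a_{i+1}^{2}\rangle$ one has $a_i^{-1}a_{i+1}^{2}a_i=a_{i+1}$, so $a_{i+1}$ fixes both the edge $e_{i+1}$ and its translate $a_i^{-1}e_{i+1}$; the far endpoints $v_{i+1,i+2}$ and $a_i^{-1}v_{i+1,i+2}$ are at distance $2$ in $X$ yet their stabilisers both contain $a_{i+1}$. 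More generally, the fixed-point set of a non-trivial elliptic element is either a single vertex or a star of edges around one central vertex, all oriented towards their degree-$1$ endpoints; thus $\Gamma$ genuinely has edges between vertices at distance $2$, and your proposed contradiction (``edge groups are maximal cyclic and unlinked'') does not materialise.

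This is exactly why the paper passes to the subgraph $\Gamma_{\mathrm{dist}}$: the distortion condition is not a redundancy but the mechanism that separates distance-$1$ pairs from distance-$2$ pairs. When the fixed vertices are adjacent, the edge is oriented towards one endpoint, so $P\cap P'$ (the edge group) is undistorted in exactly one of $P,P'$; when they are at distance $2$, both edges of the connecting path point away from the central vertex, so the common cyclic subgroup sits as the ``stable letter'' side in both stabilisers and is undistorted in each. The correct dichotomy is therefore: distance $1$ $\Leftrightarrow$ distorted in exactly one; distance $2$ $\Leftrightarrow$ undistorted in both. Once you replace your Step~2 by this statement (and prove it via the description of fixed-point sets just given), the rest of your outline goes through; in particular your orientation-matching in Step~3 is already the right idea. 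Your approach to Step~1 via Bass--Serre trees is plausible but more circuitous than the paper's: there one argues directly on $X$ that the image of the distorted generator of $BS(1,m)$ must be elliptic, and then uses the star-shaped description of fixed-point sets to find a global fixed vertex.
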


In a sense, the graph $\Gamma$ is a completely canonical object associated to $H$,  as it simply is the graph encoding the intersection of maximal subgroups of $H$ of the form $BS(1,2)$, with the natural action of $H$ on it by conjugation. From a geometric viewpoint though, the graph $\Gamma_{\mathrm{dist}}$, although equivariantly quasi-isometric to $\Gamma$, possesses clear advantages over $\Gamma$. First of all, due to the presence of many distorted subgroups in Baumslag--Solitar groups, the flag simplicial complex generated by $\Gamma$ has simplices in every dimension and the action is no longer cocompact. Moreover, the graph $\Gamma_{\mathrm{dist}}$ is the $1$-skeleton of a CAT(0) square complex, a particularly rich and well controlled geometry, while the graph $\Gamma$ does not seem to have a good combinatorial geometry naturally associated to it.

There is a priori no reason to believe that such a graph should be \textit{robust}, in the sense that an automorphism of $H$ would induce an automorphism of $X$. 
A parallel can be made here with the case of a Coxeter group $W$ acting on its Coxeter complex $X_W$. Such a complex also appears as the universal cover of a simplex of groups, and stabilisers of vertices  correspond to conjugates of the maximal special subgroups of $W$. In general though, an isomorphism of $W$ does not induce an automorphism of $X_W$, and the associated rigidity problem for Coxeter groups has been extensively studied (see for instance \cite{CharneyDavisCoxeter}).  

It should perhaps come as a surprise then that the graph $\Gamma_{\mathrm{dist}}$, and thus the square complex $X$, do present such a robust behaviour, as an automorphism of $H$ does indeed induce an automorphism of $X$. In a sense, this robustness makes $\Gamma_{\mathrm{dist}}$ closer to be a \textit{curve complex} for the Higman group, although these complexes turn out to have different geometries, as will be explained below. Such robustness allows for the use of geometric techniques to study the automorphisms, and more generally the endomorphisms, of the group. The most spectacular consequence of this approach is the following rigidity theorem for the endomorphisms of $H$: 

\begin{thmB}\label{thm:main_endo}
 A \textit{non-trivial} morphism from the Higman group to itself is an automorphism. In particular, Higman's group is both hopfian and co-hopfian.
\end{thmB}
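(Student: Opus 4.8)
The plan is to bootstrap Theorem B from the robustness of the square complex $X$. By the discussion preceding Theorem B (and Theorem A), an \emph{automorphism} of $H$ induces an automorphism of the directed graph $\Gamma_{\mathrm{dist}}\cong X^{(1)}$; but to handle arbitrary non-trivial endomorphisms one cannot directly invoke this, so the first step is to understand what a non-trivial morphism $\varphi\colon H\to H$ does to the canonical family of subgroups isomorphic to $BS(1,2)$. The key point is that these subgroups are detected \emph{algebraically} — they are the maximal subgroups of a certain isomorphism type, sitting inside $H$ in a prescribed way — so I would first show that $\varphi$ maps (conjugates of) vertex stabilisers of $X$ into vertex stabilisers, and edge stabilisers into edge stabilisers, possibly after noting that the image cannot collapse such a subgroup because $BS(1,2)$ and $\bbZ$ have no non-trivial finite quotients would not help, but rather because any proper quotient of these subgroups compatible with the relations of $H$ forces degeneracy. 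This requires knowing that $\varphi$ is injective on each such $BS(1,2)$, which is where one uses that $H$ has no non-trivial finite quotients together with the structure of normal subgroups of $BS(1,2)$ and the amalgam structure of $H$.

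Once $\varphi$ is shown to send the canonical subgroup system into itself in a way respecting intersections and distortion, it induces a $\varphi$-equivariant simplicial map $\bar\varphi\colon X\to X$ on the directed graph $\Gamma_{\mathrm{dist}}$, hence on the square complex. The second step is then purely geometric: I would argue that a $\varphi$-equivariant map of the CAT(0) square complex $X$ which preserves the directed structure must in fact be an isomorphism onto its image, and then that a proper such subcomplex cannot be invariant — this is where the features ``reminiscent of negative curvature'' mentioned in the abstract, together with cocompactness of the action, should force $\bar\varphi$ to be surjective. Surjectivity of $\bar\varphi$ on $X$ (an equivariant map that is cellularly non-degenerate and locally injective on a complex with a cocompact group action and suitable connectivity) then yields surjectivity of $\varphi$ on a generating set: the vertex stabilisers generate $H$, and if every vertex of $X$ is in the image then every $BS(1,2)$-subgroup is hit, so $\varphi$ is surjective. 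A surjective endomorphism of a finitely presented group need not be injective, but here one can finish: either by a Hopfian-type argument using that $H$ acts properly on... no — rather, by showing $\bar\varphi$ is injective as well (an injective-on-links, locally-injective equivariant endomorphism of $X$), which forces $\ker\varphi$ to act trivially on $X$, and then analyzing the kernel of the $H$-action on $X$ (which, since edge stabilisers are infinite cyclic and vertex stabilisers are $BS(1,2)$, is trivial) to conclude $\varphi$ is injective, hence an automorphism.

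The main obstacle I anticipate is the \emph{first} step: ruling out degenerate behaviour of $\varphi$ on the canonical subgroups without the crutch of residual finiteness. Concretely, one must show that a non-trivial $\varphi$ cannot kill a generator $a_i$, cannot send a $BS(1,2)$-subgroup to a cyclic or trivial subgroup, and cannot merge intersection patterns in a way that fails to be realized geometrically; and one must verify that the induced map $\bar\varphi$ really is well-defined and directed-structure-preserving, i.e. that the distortion relation is preserved under $\varphi$. This likely uses the defining relations $a_ia_{i+1}a_i^{-1}=a_{i+1}^2$ delicately — for instance, if $\varphi(a_i)$ had finite order or were trivial for some $i$, propagating the relation around the cycle $i\in\bbZ/4\bbZ$ should force a contradiction with non-triviality or with the known structure of $H$ (e.g. that $H$ has trivial center and is not virtually anything small). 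So in outline: (i) non-triviality plus the cyclic relations force $\varphi$ to be injective on each standard generator and on each canonical $BS(1,2)$; (ii) hence $\varphi$ induces a directed equivariant self-map $\bar\varphi$ of $X$; (iii) CAT(0) geometry plus cocompactness plus the negative-curvature features make $\bar\varphi$ a surjection, whence $\varphi$ is onto; (iv) a further injectivity argument on $X$ combined with the faithfulness of the $H$-action on $X$ upgrades this to an automorphism, giving both hopficity and co-hopficity.
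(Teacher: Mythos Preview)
Your steps (i) and (ii) are essentially correct and match the paper: a non-trivial $\varphi$ cannot kill any $a_i$ (if $\varphi(a_i)=1$ then the relation $a_ia_{i+1}a_i^{-1}=a_{i+1}^2$ forces $\varphi(a_{i+1})=\varphi(a_{i+1})^2$, hence $\varphi(a_{i+1})=1$ by torsion-freeness, and one propagates around the cycle), each special subgroup is sent into a unique special subgroup, and one obtains an equivariant, orientation-preserving combinatorial map $\varphi_*:X\to X$ which is an isomorphism on each square. The paper does not, and does not need to, show that $\varphi$ is injective on each $BS(1,2)$.

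The genuine gap is in your steps (iii)--(iv). Your plan to obtain surjectivity of $\varphi_*$ from ``CAT(0) geometry plus cocompactness plus negative-curvature features'' is not a workable strategy here: $X$ contains uncountably many flats and is not hyperbolic, so there is no negative-curvature machinery to invoke, and equivariance alone is circular (transitivity of the $\varphi(H)$-action on squares would give surjectivity of $\varphi_*$, but that is exactly surjectivity of $\varphi$). Likewise ``injectivity on links hence $\ker\varphi$ acts trivially'' is not available, since you have not shown $\varphi_*$ is locally injective --- you only know it is an isomorphism on each square.

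The idea you are missing, and which replaces all of (iii)--(iv), is much more elementary. Since $H$ acts \emph{freely and transitively on the squares} of $X$, post-composing $\varphi$ with a single inner automorphism carries the image of the fundamental square $C_0$ back to $C_0$ itself; the induced map is then an orientation-preserving automorphism of $C_0$, i.e.\ a power of the cyclic rotation, so after further composing with an element of the obvious $\bbZ/4\bbZ$ one may assume $\varphi_*$ fixes $C_0$ pointwise. At this point no geometry is needed: one has $\varphi(a_i)=a_i^{n_i}$ for some non-zero integers $n_i$, and a direct computation with Britton's Lemma in each $BS(1,m_i)=\langle a_i,a_{i+1}\rangle$ (applied to the image of the trivial word $a_ia_{i+1}a_i^{-1}a_{i+1}^{-m_i}$) forces first $n_i>0$ and then $n_i=1$ for every $i$. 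Thus $\varphi$ differs from the identity by an inner automorphism composed with a cyclic permutation of the generators, hence is an automorphism; hopficity and co-hopficity follow immediately. The point is that the rigidity is ultimately algebraic, coming from the exponential growth in the Baumslag--Solitar relations, not from large-scale geometric surjectivity arguments on $X$.
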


Some other classes of groups are known to be both hopfian and co-hopfian. Recall that the Hopf property follows from residual finiteness for finitely generated groups. Residual finiteness was proved by Grossman for mapping class groups and outer automorphism groups of free groups \cite{GrossmanRF},  and follows from a theorem of Malcev for linear groups \cite{MalcevRF}. The co-Hopf property was proved by Ivanov--McCarthy for mapping class groups \cite{IvanovMcCarthyCoHopf}, by Bridson--Vogtmann for $Out(F_n)$ \cite{BridsonVogtmannAutomorphismsOut}, and by Prasad for $SL_n(\bbZ)$, $n \geq 3$ \cite{PrasadLatticesCoHopf}. In another direction, torsion-free one-ended hyperbolic groups are known to be both hopfian and co-hopfian by work of Sela \cite{SelaCoHopfHyperbolic, SelaHopfHyperbolic}, who also obtained deep results about the structure of their endomorphisms.

The following theorem completes Theorem B by giving a full description of the automorphism group of $H$, and pushes the analogy with the likes of mapping class groups even further. It states that, in addition to the inner automorphisms, automorphisms of the group only come from the ``obvious symmetries''.

\begin{thmC}\label{thm:main_auto}
The automorphism group of Higman's group splits as the semi-direct product 
 $$Inn(H) \rtimes  \bbZ / 4\bbZ,  $$
 where $Inn(H) \cong H$ is the inner automorphism group of $H$ and $\bbZ / 4\bbZ$ is the subgroup generated by the automorphism of $H$ permuting cyclically the standard generators. In particular, $Out(H)$ is isomorphic to $\bbZ / 4\bbZ$.
\end{thmC}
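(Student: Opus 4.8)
The plan is to use Theorem A to turn automorphisms of $H$ into automorphisms of the CAT(0) square complex $X$, to analyse these via the quotient $X/H$, and to read off $Aut(H)$. Every $\phi\in Aut(H)$ permutes the maximal $BS(1,2)$-subgroups of $H$, preserving non-trivial intersections and the distortion of these intersections, hence induces an automorphism of the directed graph $\Gamma_{\mathrm{dist}}$; since the squares of $X$ are also recognisable from the algebra of $H$ (a square is pinned down by the four maximal $BS(1,2)$-subgroups it meets, which form a directed $4$-cycle in $\Gamma_{\mathrm{dist}}$ whose consecutive intersections are infinite cyclic and constitute a conjugate of the standard tuple $(a_0,a_1,a_2,a_3)$), Theorem A promotes $\phi$ to an automorphism $\hat\phi$ of the square complex $X$, equivariant in the sense that $\hat\phi(h\cdot x)=\phi(h)\cdot\hat\phi(x)$. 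This gives a homomorphism $\Psi\colon Aut(H)\to Aut(X)$ sending $\mathrm{conj}_h$ to the action of $h$. The map $\Psi$ is injective: if $\hat\phi=\mathrm{id}$ then $\phi$ fixes the vertex with stabiliser $\langle a_i,a_{i+1}\rangle$ and the edge with stabiliser $\langle a_i\rangle$, so $\phi(\langle a_i\rangle)=\langle a_i\rangle$ and $\phi(a_i)=a_i^{\pm1}$; feeding this into the defining relations $a_ia_{i+1}a_i^{-1}=a_{i+1}^2$ and using that in $BS(1,2)=\langle b,t\mid tbt^{-1}=b^2\rangle$ the subgroup $\langle b\rangle$ is not $t^{-1}$-invariant, an elementary computation forces every sign to be $+1$, so $\phi=\mathrm{id}$.

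Since $\hat\phi$ is $\phi$-equivariant, it descends to an automorphism $\bar\phi$ of the directed square complex $X/H$. Now $X/H$ is a single square whose four edges carry the groups $\langle a_1\rangle,\langle a_2\rangle,\langle a_3\rangle,\langle a_0\rangle$, each oriented towards the endpoint in which it is undistorted; because $a_{i+1}$ is the distorted generator of $\langle a_i,a_{i+1}\rangle\cong BS(1,2)$ but the undistorted generator of $\langle a_{i+1},a_{i+2}\rangle$, the four vertices form a directed $4$-cycle, so $Aut(X/H)\cong\bbZ/4\bbZ$, generated by the rotation $r$. The cyclic permutation $\tau\colon a_i\mapsto a_{i+1}$ is a well-defined automorphism of $H$ (the presentation is cyclically symmetric), and $\Psi(\tau)$ descends to $r$. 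Hence, for a suitable $k\in\bbZ/4\bbZ$, the automorphism $\psi:=\tau^{-k}\circ\phi$ satisfies $\bar\psi=\mathrm{id}$.

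It remains to prove that $\bar\psi=\mathrm{id}$ forces $\psi\in Inn(H)$, which is the heart of the argument. As $\hat\psi$ preserves every $H$-orbit of vertices, edges and squares, together with all orientations, we may post-compose $\psi$ with an inner automorphism (which does not disturb the descent to $X/H$) to assume $\hat\psi$ fixes the vertex $v_0$ with stabiliser $P_{01}:=\langle a_0,a_1\rangle$, so that $\psi$ restricts to an automorphism of $P_{01}$. One then invokes the structure of $Aut(BS(1,2))$: no automorphism inverts the undistorted generator $t$ (compare the indices of the nested subgroups $\langle b\rangle\supsetneq t\langle b\rangle t^{-1}$ before and after applying the automorphism), and an automorphism fixing $t$ must be of the form $b\mapsto b^{\pm2^n}$. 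Combining this with the fact that $\hat\psi$ preserves the edge-orbits incident to $v_0$ — which forces $\psi$ to send $\langle a_1\rangle$ to a $P_{01}$-conjugate of itself — one adjusts $\psi$ by an inner automorphism of $P_{01}$ (still fixing $v_0$) so that $\psi$ fixes $a_0$ and $a_1$, hence fixes $P_{01}$ pointwise. Then $\hat\psi$ fixes the link of $v_0$, in particular the standard square $\sigma_0$ with trivial stabiliser, pointwise; walking around $\sigma_0$ to the vertices with stabilisers $P_{12},P_{23},P_{30}$ and repeating the argument — each time the impossibility of inverting the undistorted generator of the relevant $BS(1,2)$-subgroup removes the remaining sign ambiguity — shows that $\psi$ also fixes $a_2$ and $a_3$. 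Since $H=\langle a_0,a_1,a_2,a_3\rangle$, the adjusted $\psi$ is trivial, so the original $\psi$ is inner.

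Putting the pieces together, every $\phi\in Aut(H)$ equals $\tau^k\circ\mathrm{conj}_h$, so $Aut(H)=\langle Inn(H),\tau\rangle$; since $\tau\,\mathrm{conj}_h\,\tau^{-1}=\mathrm{conj}_{\tau(h)}$ we get $Inn(H)\trianglelefteq Aut(H)$, and if $\tau^k\in Inn(H)$ then by injectivity of $\Psi$ its image $r^k\in Aut(X/H)$ is trivial, whence $4\mid k$. Therefore $Aut(H)=Inn(H)\rtimes\langle\tau\rangle\cong Inn(H)\rtimes\bbZ/4\bbZ$, and $Inn(H)\cong H$ because $H$ has trivial centre (being a non-degenerate amalgam over a free group, equivalently a torsion-free acylindrically hyperbolic group); in particular $Out(H)\cong\bbZ/4\bbZ$. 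The main obstacle is the rigidity step of the third paragraph: extracting from ``$\hat\psi$ descends to the identity on $X/H$'' that $\psi$ is inner. This requires the precise computation of $Aut(BS(1,2))$ — especially the asymmetry between its distorted and undistorted cyclic subgroups — a careful bookkeeping of how $\hat\psi$ acts on the (infinite) links of vertices, and the propagation of rigidity around the fundamental square; the remaining steps are routine once Theorem A is available.
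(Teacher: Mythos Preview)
Your overall strategy matches the paper's: realise each automorphism of $H$ as an automorphism of $X$ via Theorem~A, reduce modulo an inner automorphism and a power of the cyclic shift $\tau$ to an automorphism $\psi$ whose induced map fixes the fundamental square pointwise, and then prove $\psi=\mathrm{id}$. The difference lies entirely in this last rigidity step. The paper (Lemma~\ref{lem:identity_square}) does it in one stroke: once $\hat\psi$ fixes the fundamental square one has $\psi(a_i)\in\langle a_i\rangle$, say $\psi(a_i)=a_i^{n_i}$, and applying $\psi$ to the relation $a_ia_{i+1}a_i^{-1}=a_{i+1}^{2}$ together with Britton's Lemma in $\langle a_i,a_{i+1}\rangle\cong BS(1,2)$ forces first $n_i>0$ and then $n_i=1$ for every $i$. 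Your route through the structure of $\mathrm{Aut}(BS(1,2))$ and a walk around the square reaches the same conclusion but is more circuitous; in particular the sentence ``one adjusts $\psi$ by an inner automorphism of $P_{01}$ \ldots\ so that $\psi$ fixes $a_0$ and $a_1$'' overstates what inner automorphisms of $P_{01}$ can achieve---they only normalise you to $\psi(a_0)=a_0$, $\psi(a_1)=a_1^{\pm1}$, the sign being killed only at the \emph{next} vertex $P_{12}$ (where $a_1$ becomes the undistorted generator), as you yourself say two lines later. This is a presentational wrinkle rather than a genuine gap. It is also worth noting that the paper states Lemma~\ref{lem:identity_square} for arbitrary non-trivial \emph{endomorphisms}, not just automorphisms, and this is exactly what delivers Theorem~B simultaneously; your argument invokes bijectivity (``$\psi$ restricts to an automorphism of $P_{01}$'') and so does not immediately give the stronger endomorphism rigidity.
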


Such splittings or results of a similar nature are known to hold for mapping class groups of hyperbolic surfaces by results of Ivanov \cite{IvanovAutomorphisms} and McCarthy \cite{McCarthyAutomorphisms}, and for linear groups over the integers by work of Hua--Reiner \cite{HuaReinerAutomorphismsGLn}. Bridson--Vogtmann showed that the outer automorphism group of $Out(F_n)$ is trivial \cite{BridsonVogtmannAutomorphismsOut}. 

Note that all the tools developed in this article work equally for other squares of  Baumslag--Solitar groups $BS(1,n)$, $n>1$. It is thus possible to modify the presentation to get rid of the obvious symmetries and construct groups with a very constrained set of endomorphisms. 

\begin{corD}
 Let $G$ be the group with presentation 
 $$\langle a,b,c,d ~|~ aba^{-1}= b^2, bcb^{-1}=c^3, cdc^{-1}=d^5, dad^{-1}=a^7\rangle. $$
 Then every non-trivial morphism from $G$ to itself is an inner automorphism.
\end{corD}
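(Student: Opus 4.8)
The plan is to transpose the entire argument of the article from $H$ to $G$, the only genuinely new input being that $G$ admits no ``obvious symmetries''. The starting point is the observation, already mentioned above, that $G$ is the fundamental group of a square of groups of exactly the same combinatorial shape as the one defining $H$: the underlying square is the same, the central $2$-cell carries the trivial group, the four edge groups are the infinite cyclic groups $\langle b\rangle,\langle c\rangle,\langle d\rangle,\langle a\rangle$, and the four vertex groups are now
$$\langle a,b\mid aba^{-1}=b^2\rangle\cong BS(1,2), \qquad \langle b,c\mid bcb^{-1}=c^3\rangle\cong BS(1,3),$$
$$\langle c,d\mid cdc^{-1}=d^5\rangle\cong BS(1,5), \qquad \langle d,a\mid dad^{-1}=a^7\rangle\cong BS(1,7),$$
arranged cyclically. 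I would let $X_G$ be the universal cover of this complex of groups; it is a CAT(0) square complex on which $G$ acts cocompactly, with vertex stabilisers the conjugates of the four $BS(1,n)$'s and infinite cyclic edge stabilisers. In each vertex group $BS(1,n)=\langle x,y\mid xyx^{-1}=y^n\rangle$ the ``denominator'' subgroup $\langle y\rangle$ is exponentially distorted (since $x^k y x^{-k}=y^{n^k}$) while the ``numerator'' subgroup $\langle x\rangle$ is undistorted (it maps isomorphically onto the $\bbZ$-quotient); consequently, exactly as for $H$, each of the four edge subgroups of the square is distorted in precisely one of its two vertex groups, the $1$-skeleton of $X_G$ can be oriented as in the introduction, and one can form the intrinsic directed graph $\Gamma_{\mathrm{dist}}$ attached to $G$ (vertices: the maximal subgroups of $G$ isomorphic to some solvable Baumslag--Solitar group $BS(1,n)$, $n>1$; oriented edges: non-trivial intersections distorted in exactly one endpoint). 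Since every construction and every proof in the article uses only that the vertex groups are solvable Baumslag--Solitar groups $BS(1,n)$ with $n>1$ and that the edge groups are infinite cyclic, all the results carry over to $G$; in particular the analogue of Theorem A holds, and, crucially, the analogue of Theorem B: \emph{every non-trivial morphism $\varphi\colon G\to G$ is an automorphism}.

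It therefore suffices to prove that $Aut(G)=Inn(G)$, and for this I would reproduce the proof of Theorem C. That proof shows, via the robustness of $\Gamma_{\mathrm{dist}}$ (equivalently, of $X_G$) under automorphisms, that $Aut(G)$ is the extension of $Inn(G)$ by the group of automorphisms of $G$ induced by automorphisms of the defining square of groups, i.e. by the label-preserving combinatorial symmetries of the square; so $Aut(G)\cong Inn(G)\rtimes D$, where $D$ is the group of such symmetries. Now any element of $D$ permutes the four vertex groups by group isomorphisms; but $BS(1,2)$, $BS(1,3)$, $BS(1,5)$ and $BS(1,7)$ are pairwise non-isomorphic — their abelianisations are $\bbZ$, $\bbZ\oplus\bbZ/2\bbZ$, $\bbZ\oplus\bbZ/4\bbZ$ and $\bbZ\oplus\bbZ/6\bbZ$ respectively — so every element of $D$ fixes each vertex of the square, and a symmetry of the square fixing all four vertices and preserving all the structure is the identity. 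Hence $D$ is trivial, $Out(G)=1$, and $Aut(G)=Inn(G)$.

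Putting the two halves together: a non-trivial morphism $G\to G$ is an automorphism by the analogue of Theorem B, hence an inner automorphism since $Aut(G)=Inn(G)$; this is the statement of the corollary.

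I expect the work here to be bookkeeping rather than new ideas: one must go through the article and verify that every distortion estimate, normal-form manipulation and combinatorial computation originally performed inside $BS(1,2)$ remains valid when the exponent $2$ is replaced by $3$, $5$ or $7$, and when several such vertex groups with different exponents interact along the edges of $X_G$. The one load-bearing fact is the dichotomy ``numerator subgroup undistorted / denominator subgroup exponentially distorted'' in $BS(1,n)$, which holds for every $n>1$; the point requiring the most care is to check that nowhere in the original argument is the $\bbZ/4\bbZ$-symmetry of Higman's presentation used in an essential way — precisely because it is absent here, and its absence is exactly what forces $Out(G)$ to be trivial.
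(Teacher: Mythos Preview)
Your proposal is correct and follows exactly the paper's approach; indeed the paper has already carried out the entire argument for arbitrary Higman-like groups $H_{(m_i)}$ (see the Convention after the definition of $H_{(m_i)}$ and Proposition~\ref{prop:endo_auto}), so no ``transposing'' or re-verification is needed --- the corollary is the immediate specialisation to $(m_i)=(2,3,5,7)$, where the group $K$ of cyclic symmetries satisfying $m_i=m_{i+k}$ for all $i$ is visibly trivial. Your abelianisation computation distinguishing the four $BS(1,n)$'s is the same one the paper invokes in the proof of Proposition~\ref{prop:endo_auto}.
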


The rich geometry of the Higman group and the resulting rigidity phenomena, in addition to being completely unforeseen, are quite astonishing. While the analogies between mapping class groups, $Out(F_n)$, and lattices in Lie groups are well documented,
 the Higman group seems at first to be of a completely different nature. This contrast shows in particular in the definition of the CAT(0) square complex $X$, thought as the analogue of the curve complex for $H$. All the former groups are (outer) automorphism groups of some appropriate objects and the relevant actions are  built using some rich combinatorial structure associated to these objects (actions on curves of a surface, on free splittings of $F_n$, etc.). By contrast, the Higman group is not known to be realised as the automorphism group of some ``simple'' mathematical object whose combinatorics could be used to recover such an action. The fact that the action  of the group on itself by conjugation alone could contain such a rich 
geometry and could display such similarities with well-known classes of groups is somehow mesmerising, and raises the question of whether Higman's group can be realised as the (outer) automorphism group of some interesting combinatorial object.\\

The action of the Higman group on $X$ can also be used to study the subgroups of $H$.  We show that the Higman group satisfies the following refined version of the Tits alternative: 

\begin{thmE}
 A non-cyclic subgroup of Higman's group is either contained in a vertex stabiliser, and hence embeds in $BS(1,2)$,  or contains a non-abelian free subgroup.
\end{thmE}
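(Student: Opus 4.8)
The plan is to exploit the action of $H$ on the CAT(0) square complex $X$ together with the structure of vertex and edge stabilisers, and to play off the tree structure coming from the amalgamated product splitting. Let $\Lambda \leq H$ be a non-cyclic subgroup. First I would analyse how $\Lambda$ sits with respect to the action on $X$. If $\Lambda$ fixes a point of $X$, then, since $X$ is CAT(0) and the action is cocompact with the point being either a vertex or lying in an edge or square, a fixed point forces $\Lambda$ into a vertex stabiliser (edge and square stabilisers being themselves contained in vertex stabilisers); then $\Lambda \hookrightarrow BS(1,2)$ and we are in the first case of the alternative. So I may assume $\Lambda$ has no global fixed point on $X$.

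Next I would use the action of $H$ on one of its Bass--Serre trees $T$ coming from the amalgamated product splitting over non-abelian free subgroups. The subgroup $\Lambda$ acts on $T$; if $\Lambda$ contains a hyperbolic isometry whose axis it does not fix setwise together with another independent hyperbolic element, a ping-pong argument on $T$ immediately yields a non-abelian free subgroup, and we are done. The remaining cases are: (i) $\Lambda$ fixes a point of $T$, i.e. $\Lambda$ is conjugate into an edge or vertex group of the splitting; or (ii) $\Lambda$ fixes an end or a line of $T$ (the ``parabolic'' or ``lineal'' cases). In case (i), $\Lambda$ lies in a free group or in a subgroup of the form $BS(1,2) *_{\bbZ} BS(1,2)$ (the vertex groups of the splitting), and I would then either find a free subgroup directly (free groups of rank $\geq 2$ satisfy the conclusion) or recurse into a smaller complex / invoke the structure of $BS(1,2)$ and its amalgams; a non-cyclic subgroup of such an amalgam that is not conjugate into a factor again produces a free group by ping-pong on the corresponding Bass--Serre tree. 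In case (ii), fixing an end of $T$ means $\Lambda$ maps onto a subgroup of $\bbZ$ with kernel contained in an edge stabiliser (a non-abelian free group); either the kernel is non-cyclic, and then it contains $F_2$, or $\Lambda$ is (cyclic)-by-(subgroup of $\bbZ$), hence solvable, and I must rule this out or show it embeds in $BS(1,2)$.

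The cleanest route, which I would actually follow, is to combine the above with the more refined features of the $X$-action highlighted earlier in the paper: the stabilisers of vertices are $BS(1,2)$, stabilisers of edges are cyclic, and — crucially — the paper asserts that $H$ contains no $\bbZ^2$ and that the $X$-action has ``features reminiscent of negative curvature.'' I would use the latter to show that any subgroup of $H$ acting on $X$ without a fixed point and without an invariant line either contains an element acting hyperbolically with strong enough contraction properties (a rank-one or WPD-type element), so that a standard ping-pong with a second independent such element gives $F_2$; or else $\Lambda$ stabilises an invariant line or end of $X$, in which case I reduce to understanding the stabiliser of such an invariant object, which by the no-$\bbZ^2$ result and the structure of edge/vertex stabilisers must be virtually cyclic, contradicting non-cyclicity (here I would note $H$ is torsion-free, so virtually cyclic means cyclic).

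The main obstacle I expect is the lineal / parabolic case: ruling out a non-cyclic subgroup $\Lambda$ that neither contains $F_2$ nor is conjugate into a vertex stabiliser, i.e. a subgroup stabilising an end or a line in $X$ (equivalently, acting on some Bass--Serre tree with a fixed end). Handling this requires showing that the ``parabolic'' stabilisers of such boundary objects are controlled — that they are themselves conjugate into vertex stabilisers of $X$, or are cyclic — which is exactly where the intrinsic/acylindricity-type properties of the $X$-action, and the absence of $\bbZ^2$, must be brought to bear. Everything else (the fixed-point case, the ping-pong producing $F_2$ from two independent hyperbolics, and the torsion-freeness bookkeeping) is routine given the earlier results in the paper.
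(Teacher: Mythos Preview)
Your proposal is not a proof but a menu of strategies, and the branch you yourself flag as the ``main obstacle'' --- the lineal/parabolic case --- is left unresolved. Concretely: you never justify that a subgroup without two independent hyperbolic elements on $X$ (or on a Bass--Serre tree $T$) must stabilise a line or an end; you never justify that stabilisers of such boundary objects in $X$ are virtually cyclic; and your appeal to ``rank-one or WPD-type'' elements is unsupported --- indeed the paper shows explicitly (Proposition~\ref{prop:many_flats}) that some hyperbolic elements of $H$ are \emph{not} contracting, so you cannot simply assume enough rank-one behaviour to run ping-pong. Your Bass--Serre tree paragraph also contains an error: if $\Lambda$ fixes an end of $T$, the kernel of the translation-length homomorphism is an increasing union of edge stabilisers along a ray, not a subgroup of a single edge stabiliser, so ``kernel contained in a non-abelian free group'' does not follow.

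The paper's argument is organised around a different dichotomy and avoids all of this. One splits on whether $G$ contains a non-trivial \emph{elliptic} element of $X$. If it does, a bespoke ping-pong (Proposition~\ref{prop:free_group}, proved via disc diagrams and the combinatorial Gau\ss--Bonnet theorem) shows that two elliptic elements with disjoint fixed-point sets already generate a group containing $F_2$; combined with a Helly-type argument this gives Corollary~\ref{cor:Tits_elliptic}: $G$ fixes a vertex or contains $F_2$. If instead $G$ is \emph{purely loxodromic}, then $G$ acts freely on $X$ and one invokes the Sageev--Wise Tits alternative for groups acting freely on CAT(0) cube complexes: $G$ is virtually abelian or contains $F_2$. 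The no-$\bbZ^2$ result (Proposition~\ref{prop:no_Z2}) then kills the virtually abelian option for non-cyclic $G$. Your sketch never isolates the purely loxodromic case and never invokes (or replaces) Sageev--Wise, which is precisely the tool that dispatches the awkward residual cases you were worried about.
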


In particular, the group belongs to the now extensive list of groups satisfying the Tits alternative, which includes linear groups \cite{TitsAlternative}, mapping class groups \cite{McCarthyTitsAlternative, IvanovAutomorphisms}, and outer automorphism groups of free groups \cite{BestvinaFeighnHandelTitsI,BestvinaFeighnHandelTitsII}.

In proving such a theorem, it is necessary to understand the abelian subgroups of $H$. As it turns out, a very surprising phenomenon occurs: 

\begin{thmF}
The CAT(0) square complex $X$ contains uncountably many flats and does not have the Isolated Flats Property. However, none of the flats is periodic, and $H$ does not contain $\bbZ^2$ as a subgroup.
\end{thmF}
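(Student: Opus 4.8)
I would establish the four assertions in turn, the last two carrying the weight. For the uncountably many flats, I would produce flat planes by hand inside $X$. The structural input — recorded earlier — is that every edge of $X$ lies in infinitely many squares, since the edge-stabilizers are infinite cyclic of infinite index in the vertex-stabilizers $\cong BS(1,2)$, and that the square-stabilizers are trivial. A flat plane will be a combinatorial $\bbZ^2$-grid of squares, built outward: each time a new square must be glued across an edge there are infinitely many compatible choices, and one checks these choices can be made so that the resulting grid maps to $X$ by a local isometry; since a geodesic line in the grid then maps to a local — hence global — geodesic of $X$, the grid is an isometrically embedded flat. Distinct choice-sequences give distinct flats, and varying the choices already along a single line produces $2^{\aleph_0}$ of them; the construction can moreover be arranged so that two of the resulting flats agree on an entire half-plane. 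As $X$ is two-dimensional, each such flat is maximal, so there are uncountably many $H$-orbits of maximal flats; since a cocompact action with isolated flats admits only finitely many orbits of maximal flats, the Isolated Flats Property fails — equivalently, two flats sharing a half-plane have unbounded coarse intersection, which already contradicts the isolation condition.

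For the absence of $\bbZ^2$, suppose $Q=\langle g,h\rangle\cong\bbZ^2\le H$. Every isometry of a CAT(0) cube complex is semisimple (Haglund), so each of $g,h$ is elliptic — hence fixes a vertex and lies in a vertex-stabilizer $\cong BS(1,2)$ — or hyperbolic, with a combinatorial axis. If both are elliptic then, commuting elliptic isometries of a complete CAT(0) cube complex having a common fixed point, $Q$ fixes a point of $X$; as $H$ is torsion-free and square-, resp. edge-, stabilizers are trivial, resp. infinite cyclic, $Q$ lies in a vertex-stabilizer — impossible, since $BS(1,2)$ has no $\bbZ^2$. So one generator, say $g$, is hyperbolic; then $h$ preserves $\mathrm{Min}(g)\cong Y\times\bbR$ and respects this splitting, commuting with the translation $g$. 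Running through the possibilities for the two coordinates of $h$ and invoking the flat strip theorem inside $Y$, one reduces — after possibly replacing $h$ by a product $g^{m}h^{n}$ — to one of two cases: either $Q$ centralizes a nontrivial element of an edge-stabilizer of $X$, or $Q$ stabilizes a flat plane $F\subseteq X$ on which it acts cocompactly, i.e., $F$ is periodic.

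The heart of the matter is to exclude these two cases, using that the relevant stabilizers are too thin. In the edge-stabilizer case one computes the centralizer in $H$ of a conjugate of a power of some $a_i$: working with the splittings of $H$ (and of its vertex groups of the form $BS(1,2)*_{\bbZ}BS(1,2)$) as graphs of $BS(1,2)$-groups over cyclic edge-groups, and with the centralizer computations inside $BS(1,2)$ (the centralizer of the distorted generator is locally cyclic, that of the undistorted generator infinite cyclic), one shows this centralizer is locally cyclic — so it contains no $\bbZ^2$. For the periodic flat, the decisive observation is that every hyperplane $\hat h$ of $X$ is a tree on which $\mathrm{Stab}_H(\hat h)$ acts with trivial edge-stabilizers (square-stabilizers of $X$ are trivial) and infinite cyclic vertex-stabilizers (edge-stabilizers of $X$); hence $\mathrm{Stab}_H(\hat h)$ is a free product of infinite cyclic groups, and in particular contains no $\bbZ^2$. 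If $F$ were periodic, a generator of $\mathrm{Stab}_H(F)$ acting as a translation of $F$ fixes setwise every hyperplane in one of the two transverse families crossing $F$; acting hyperbolically on such a tree $\hat h_0$, it has infinite cyclic centralizer in $\mathrm{Stab}_H(\hat h_0)$, and combining this with the commutation relations in $\mathrm{Stab}_H(F)$ and with the asymmetry between the distorted and undistorted edge-subgroups recorded by the orientation of Theorem A — which dictates how the trees $\hat h$ assemble — produces a contradiction. This shows $H$ contains no $\bbZ^2$, and the non-periodicity of all flats follows at once, since a cocompact isometric action of a subgroup of the torsion-free group $H$ on $\bbE^2$ would yield, via the standard fact that torsion-free polycyclic groups of Hirsch length at least two contain $\bbZ^2$, a $\bbZ^2\le H$.

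The main obstacle is precisely the exclusion of the periodic flat (and, to a lesser extent, the centralizer computation): the action of $H$ on $X$ is very far from proper — all cell-stabilizers are infinite, and hyperplanes are infinite-valence trees — so the Flat Torus Theorem and the familiar semisimplicity machinery do not apply to $Q$ directly. Everything must instead be wrung out of the fine combinatorics, namely the free-product-of-cyclics structure of hyperplane-stabilizers together with the distortion data encoded in the orientation of Theorem A, and the difficulty is in making these two ingredients interlock tightly enough to eliminate every commuting pair.
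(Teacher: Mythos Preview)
Your construction of flats is essentially the paper's: one builds grids square by square, using the infinitely many choices along each edge, checks the result is a local isometry, and varies the choices along a ray to obtain uncountably many flats sharing a half-flat. Fine.

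The no-$\bbZ^2$ argument, however, has a genuine gap and rests on a misconception. You assert that the Flat Torus Theorem does not apply to $Q$ directly, and then spend a paragraph on an ad hoc reduction via $\mathrm{Min}(g)\cong Y\times\bbR$, the flat strip theorem, and a centralizer computation in edge-stabilisers. In fact the Flat Torus Theorem \emph{does} apply: once one knows $Q$ is purely loxodromic (and the paper obtains this in one line from Corollary~\ref{cor:Tits_elliptic}, since a $\bbZ^2$ can neither fix a vertex nor contain $F_2$), the action of $Q$ on $X$ is free, semisimple, and metrically proper by Bridson's theorem on polyhedral complexes with finitely many shapes, so the Flat Torus Theorem yields a periodic combinatorial flat $F$ immediately. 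Your detour through the edge-centraliser case is unnecessary.

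The real issue is your elimination of the periodic flat. You observe correctly that each hyperplane $\hat h$ is a tree and that $\mathrm{Stab}_H(\hat h)$ is a free product of infinite cyclic groups, hence free. You then take $g\in Q$ preserving a hyperplane $\hat h_0$ and note its centraliser in $\mathrm{Stab}_H(\hat h_0)$ is cyclic. But the other generator of $Q$ does \emph{not} lie in $\mathrm{Stab}_H(\hat h_0)$ --- it translates $\hat h_0$ to a parallel hyperplane --- so this centraliser bound gives no contradiction. You then invoke ``the asymmetry recorded by the orientation of Theorem~A'' without saying what the mechanism is; you yourself flag that ``the difficulty is in making these two ingredients interlock'', and indeed they do not interlock in any way you have explained.

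The paper's argument is both shorter and complete, and uses a different idea. To each ordered pair $(C,C')$ of adjacent squares one attaches the exponent $n_{C,C'}$ with $C'=ha_{C,C'}^{n_{C,C'}}C_0$. The content of Lemma~\ref{lem:4_cycle} (a direct application of Britton's Lemma in $BS(1,m)$) is that around any vertex of $X$ the four labels satisfy $n_{C_1,C_2}=-n_{C_3,C_4}$ and one of $n_{C_2,C_3},\,n_{C_1,C_4}$ is divisible by $m^{|n_{C_1,C_2}|}$. Now if $Q$ acts cocompactly on a combinatorial flat $F$, the labels $|n_{C,C'}|$ over adjacent squares of $F$ take only finitely many values; choose a pair realising the maximum and apply the lemma at the appropriate endpoint of their common edge to produce a strictly larger label inside $F$. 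That is the whole argument: the Baumslag--Solitar relation forces the labels to grow geometrically across the flat, which is incompatible with periodicity. This is the missing idea in your sketch.
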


To our knowledge, this is the first example of a group acting cocompactly on a CAT(0) space containing isometrically embedded flats, none of which is periodic. In the case of proper actions, the analogous \textit{flat closing conjecture} \cite[Section $6.B_3$]{GromovAsymptotic} is still open in general.

As was pointed out to us by V. Guirardel, one can recover the fact that a subgroup of $H$ is either metabelian or contains a non-abelian free subgroup directly through the action of $H$ on the Bass--Serre tree associated to one of the standard splittings of $H$ \cite{HigmanGroup}. Indeed, this can be done by using the fact that a group acting non-trivially on a simplicial tree either contains a copy of $F_2$ or contains a finite index subgroup fixing a point at infinity. However, ruling out the possibility of a $\bbZ^2$ subgroup fixing a point at infinity in this case would require finer ad hoc arguments to overcome the poorly behaved geometry of such an action. By contrast, the very well controlled geometry of the action of $H$ on $X$ provides us with a very natural approach: The basic properties of the action would force such a $\bbZ^2$ subgroup to act freely on $X$, and the Flat Torus Theorem thus transforms the problem of the non-existence of a $\bbZ^2$ subgroup into a problem about the non-existence of periodic flats in $X$.

Note also that Theorem E would be rather straightforward, should the complex $X$ be hyperbolic and the action of $H$ be acylindrical. While Theorem F tells us that this is far from being the case, it is worth seeing this remark in light of the situation for generalisations of the Higman group. Indeed, as noticed by Higman himself \cite{HigmanGroup}, the Higman group can be generalised to \textit{polygons} of Baumslag--Solitar groups $BS(1,2)$ on $n\geq 4$ generators. In another article \cite{MartinHigmanAcylindrical}, we will show that, for $n \geq 5$, the universal covers of the associated polygons of groups are hyperbolic and that the associated \textit{generalised Higman groups} act acylindrically on their universal covers. Thus, Higman's group can be thought as some sort of limit case in a large family of groups. While negative curvature disappears when reaching $n=4$, the geometry of the action studied in this article indicates that some features of dynamics in  negative curvature do survive this limiting process (see for instance Proposition \ref{prop:free_group}). \\

The article is organised as follows. Section \ref{sec:preliminaries} contains preliminary material about simple squares of groups and the action of $H$ on the associated CAT(0) square complex, and states the combinatorial Gau\ss-Bonnet theorem. In Section \ref{sec:graph}, we introduce the intersection graph and prove Theorem A. In Section \ref{sec:morphisms}, we study morphisms from $H$ to itself  by looking at the induced maps on $X$, and prove Theorems B, C, and D. In Section \ref{sec:action}, we study the action of subgroups of $H$ on $X$ and prove Theorems E and F.\\

\noindent \textbf{Acknowledgements.} The author thanks his colleagues at the University of Vienna, and in particular F. Berlai and M. Steenbock, for stimulating discussions on Higman's group which motivated the present work, and warmly thanks P.-E. Caprace  for useful comments on a preliminary version of this article. The author also thanks D. Allcock for pointing out an elementary proof of Lemma \ref{prop:morphismBS}, and V. Guirardel for pointing out an alternative proof of the Tits alternative for $H$. Finally, the author thanks the anonymous referees for useful comments and suggestions, and for proposing a very useful alternative characterisation of $\Gamma_{\mathrm{dist}}$.
 
 This work was partially supported by the European Research Council (ERC) grant no. 259527 of G. Arzhantseva and by the Austrian Science Fund (FWF) grant M1810-N25.

\section{Preliminaries}\label{sec:preliminaries}

\subsection{Higman's group as a square of groups}

We briefly recall some notions about simple squares of groups, and we refer the reader to \cite[Chapter II.12]{BridsonHaefliger} for more details. Everything contained in this section is fairly standard.\\ 

Let $C$ be a square with edges $e_i, i \in \bbZ / 4 \bbZ $ ordered cyclically and vertices $v_{i,i+1}:= e_i \cap e_{i+1}, i \in \bbZ / 4 \bbZ$. A \textit{simple square of groups} consists of the following data: 
\begin{itemize}
 \item for every face $\tau$ of $C$, a \textit{local group} $G_{\tau}$,
 \item for every inclusion $\tau_1 \subset \tau_2$, an injective \textit{local map} $\psi_{\tau_1, \tau_2}:G_{\tau_2} \hra G_{\tau_1}$, such that for every inclusion $\tau_1 \subset \tau_2 \subset \tau_3$, we have  
 $$ \psi_{\tau_1, \tau_2}\psi_{\tau_2,\tau_3} = \psi_{\tau_1,\tau_3}.$$
\end{itemize}

When the square of groups is \textit{developable}, that is, when the \textit{fundamental group} $G:= \underset{\leftarrow}{\lim} ~G_\tau$ is such that the natural map $G_{\tau} \ra G$ is injective for every face $\tau$ of $C$, one can define the \textit{universal cover} of the square of groups as the following square complex:

$$Y:= G \times C / \sim,$$
where two pairs $(g,x), (g',x')$ of $G \times C$ are  equivalent when $x=x'$ and $g^{-1}g' \in G_{\tau}$, $\tau$ being the unique face containing $x=x'$ in its interior. The group $G$ acts without inversion on $Y$ by left multiplication on the first factor, and we denote by $G_{\tau}$ the pointwise stabiliser of a face $\tau$ of $Y$. The action admits the square $C_0$, image of $1\times C$ in $Y$, as strict fundamental domain. For every face $\tau$ of $C$, we denote by $\tau_0$ its image in $C_0$. There exists a family of isomorphisms $f_{\tau}: G_{\tau} \ra G_{\tau_0}$, for $\tau$ a face of $C$, such that for every inclusion $\tau \subset \tau'$ in $C$, we get 
$$f_{\tau} \psi_{\tau, \tau'} = \iota_{\tau_0, (\tau')_0}f_{\tau'},$$
where $\iota_{\tau_0, (\tau')_0}: G_{(\tau')_0} \hra G_{\tau_0}$ is the inclusion of stabilisers. This allows us to identify $C$ and $C_0$.\\

A geometric criterion ensuring developability is the following \textit{non-positive curvature} condition on the square of groups. Such a criterion was originally introduced in greater generality by Gersten--Stallings to study non-positively curved triangles of groups \cite{GerstenStallings}. 

For every vertex $v_{i,i+1}$ of $C$, consider the map $f_{v_{i,i+1}}:G_{e_i} * G_{e_{i+1}} \rightarrow G_{v_{i,i+1}}$ induced by the maps $\psi_{v_{i,i+1},e_{i}}$ and $\psi_{v_{i,i+1},e_{i+1}}$, and let $l_{v_{i,i+1}}$ be the minimal free product length of an element in the kernel of that map. If $l_{v_{i,i+1}} \geq 4$ for every vertex of $C$, then the square of groups is developable and the piecewise-Euclidean metric on $Y$ obtained by identifying each square of $Y$ with a Euclidean square of side $1$ is a CAT(0) metric. Moreover, for every vertex $v$ of $Y$ in the orbit of $v_{i,i+1}$, the link of $v$ in $Y$ is isomorphic to the quotient of the Bass--Serre tree of the splitting $G_{e_i} * G_{e_{i+1}}$ under the action of the normal subgroup $\ker{f_{v_{i,i+1}}}$. In particular, the girth of the link of that vertex is $l_{v_{i,i+1}}$.\\
 
We now apply this to the case of Higman's group and other squares of solvable Baumslag--Solitar groups. We associate to positive integers $m_i, i \in \bbZ / 4 \bbZ$, a \textit{Higman-like group} $H_{(m_i)_{i \in \bbZ / 4 \bbZ}}$  given by the following presentation: 
$$H_{(m_i)_{i \in \bbZ / 4 \bbZ}} = \langle a_i, i \in \bbZ / 4 \bbZ ~|~ a_ia_{i+1}a_i^{-1}= a_{i+1}^{m_i}, i \in \bbZ / 4 \bbZ  \rangle.$$
The generators $a_i, i \in \bbZ / 4 \bbZ$, will be referred to as the \textit{standard generators}. Such groups can naturally be seen as a non-positively curved squares of groups as follows. The local group associated to the edge $e_i$ is an infinite cyclic group with a chosen generator $a_i$. The local group associated to each vertex $v_{i, i+1}$ is the Baumslag--Solitar group $BS(1,m_i)$. The local group associated to the square is trivial. Finally, the local maps $\psi_{v_{i,i+1},e_{i}}$, $\psi_{v_{i,i+1},e_{i+1}}$ send the generators $a_i, a_{i+1}$ to generators $b_i, b_{i+1}$ of $BS(1,m_i)$ satisfying the relation $b_ib_{i+1}b_i^{-1}= b_{i+1}^{m_i}$. In this case, each $l_{v_{i,i+1}}$ is $4$ and the simple square of groups is thus non-positively curved.

\begin{convention*}
In everything that follows apart from Corollaries \ref{cor:H_2_2_2_2} and \ref{cor:H_2_3_5_7}, we fix positive integers $m_i, i \in \bbZ / 4 \bbZ$. In order to lighten notations, we will simply denote by $H$ the  group $H_{(m_i)_{i \in \bbZ / 4 \bbZ}} $. Such a group will be referred to as \textit{a Higman-like group}.
\end{convention*}

The group $H$ thus acts cocompactly on the  universal cover $X$ of the associated square of groups. The complex $X$ is a CAT(0) square complex on which $H$ acts with a square as a strict fundamental domain. Moreover, the link of every vertex is a bipartite graph of girth $4$ (see Section \ref{sec:links} for further details).

\subsection{The combinatorial Gau\ss--Bonnet theorem for CAT(0) square complexes}

We now recall a combinatorial version of the Gau\ss--Bonnet Theorem introduced in \cite{McCammondWiseFansLadders}. \\

A \textit{disc diagram} $D$ over $X$ is a contractible planar square complex endowed with a combinatorial map  $D \ra X$ that is an embedding on each square. A disc diagram $D$ over $X$ is called \textit{reduced} if no two distinct squares of $D$ that share an edge are mapped to the same square of $X$. A diagram is called \textit{non-degenerate} if its boundary is homeomorphic to a circle, and \textit{degenerate} otherwise. A vertex of $D$ is an \textit{internal} vertex if its link is connected and a \textit{boundary} vertex otherwise. By the Lyndon--van Kampen theorem, every combinatorial loop of $X$ is the boundary of a reduced disc diagram. 

The \textit{curvature} of a vertex $v$ of $D$ is defined as
$$\kappa_D(v) = 2\pi - \pi \cdot \chi(\mbox{link}(v)) - n_v \frac{\pi}{2},$$
where $n_v$ is the number of squares of $D$ containing $v$.
As the link of a vertex of $X$ is a bipartite graph of girth $4$, the curvature of an internal vertex of $D$ is non-positive. The curvature of a boundary vertex is $\frac{\pi}{2}$ if it is contained in a single square of $D$, and non-positive otherwise. A boundary vertex of $D$ is called a \textit{corner} if it has non-zero curvature.

Our main tool in controlling the geometry of $X$ is a combinatorial version of the Gau\ss--Bonnet Theorem (see for instance \cite[Section V.3]{LyndonSchupp}). In this article, we use a formulation due to McCammond--Wise \cite[Theorem 4.6]{McCammondWiseFansLadders}:

\begin{thm}[Combinatorial Gau\ss--Bonnet Theorem {\cite[Theorem 4.6]{McCammondWiseFansLadders}}] For a reduced disc diagram over $X$, we have:
\begin{equation*}
~~~~~~~~~~~~~~~~~~~~~~~~~~~~~~~~~~~~~~\underset{v \mathrm{ ~vertex ~of~ }D}{\sum} \kappa_D(v) = 2\pi. ~~~~~~~~~~~~~~~~~~~~~~~~~~~~~~~~~~~~~~\qed
\end{equation*}
\label{GaussBonnet}
\end{thm}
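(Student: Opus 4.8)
The plan is to treat this as a purely combinatorial Euler-characteristic identity for the planar square complex $D$ itself: the map $D \to X$ and the reducedness hypothesis play no role in establishing the \emph{equality}, and only become relevant later, when one wants to control the \emph{sign} of the curvatures via the girth-$4$ condition on links of $X$. So I would work entirely inside $D$, write $V$, $E$, $F$ for its numbers of vertices, edges, and squares, and use that $D$ is contractible, so that $\chi(D) = V - E + F = 1$.

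First I would rewrite the defining formula by summing over all vertices:
\[
\sum_{v} \kappa_D(v) \;=\; 2\pi V \;-\; \pi \sum_{v} \chi\bigl(\mathrm{link}(v)\bigr) \;-\; \frac{\pi}{2}\sum_{v} n_v .
\]
The key observation is that the link of a vertex $v$ in a square complex is a graph whose vertices are the germs of edges of $D$ at $v$ and whose edges are the corners of squares at $v$, one for each square containing $v$; hence, writing $d_v$ for the number of edge-germs at $v$, one has $\chi(\mathrm{link}(v)) = d_v - n_v$ for \emph{every} vertex, internal or boundary. I would then plug in the two bookkeeping identities $\sum_v d_v = 2E$ (each edge has two germs) and $\sum_v n_v = 4F$ (each square has four corners), obtaining
\[
\sum_{v} \kappa_D(v) \;=\; 2\pi V - \pi(2E - 4F) - \frac{\pi}{2}\cdot 4F \;=\; 2\pi\,(V - E + F) \;=\; 2\pi\,\chi(D) \;=\; 2\pi .
\]
This is the claim. (An alternative, more hands-on route would compare the total angle $2\pi$ carried by each square with the angle deficits concentrated at internal and boundary vertices, but the Euler-characteristic computation above seems the cleanest.)

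I do not expect a serious obstacle; the only points requiring care are purely a matter of bookkeeping. One must make sure the identity $\chi(\mathrm{link}(v)) = d_v - n_v$ is applied correctly in the presence of loop-edges or squares with identified edges in $D$ — such features occur only in degenerate situations, and the counts $\sum_v d_v = 2E$ and $\sum_v n_v = 4F$ hold by the very definition of a square complex, so the formula is unaffected; and since $D$ is planar and contractible one may in any case arrange that all squares are genuinely embedded. It is also worth recording that the computation shows the combinatorial Gau\ss--Bonnet equality holds for \emph{any} disc diagram over $X$, degenerate or not, reduced or not: the hypotheses in the statement are stronger than the equality alone requires, and are included so that this identity can later be combined with the non-positivity of internal curvatures coming from the girth-$4$ links of $X$.
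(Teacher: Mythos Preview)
Your argument is correct. The paper does not actually prove this statement: it is quoted verbatim from McCammond--Wise \cite[Theorem~4.6]{McCammondWiseFansLadders} and marked with a \qed{} immediately after the statement, with no proof given. Your Euler-characteristic computation is exactly the standard derivation (and is essentially how the cited reference proves the general version), and your remark that reducedness and the map to $X$ are irrelevant for the equality itself is accurate---those hypotheses only matter later when the paper uses non-positivity of internal curvatures.
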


\section{The intersection graph associated to the Higman group}\label{sec:graph}

In this section, we prove Theorem A.

\subsection{Elementary properties of the action}

The $1$-skeleton of $X$ naturally carries some extra structure, which we now describe. \\

\textbf{Orientation.} By definition of the square of groups, we have that for every edge $e$ of $X$ with vertices $v$ and $v'$, one of the morphisms $G_e \hra G_v$, $G_e \hra G_{v'}$ is conjugated to $\langle a \rangle \hra \langle a,b | aba^{-1}=b^m \rangle$ while the other one is conjugated to  $\langle b \rangle \hra \langle a,b | aba^{-1}=b^m \rangle$, for some integer $m \geq 2$. In particular, one of the morphisms $G_e \hra G_v$, $G_e \hra G_{v'}$ is undistorted while the other  is distorted. This allows us to define an \textit{orientation} for each edge of $X$, by orienting  an edge $e$ of $X$ towards the unique vertex $v$ of $e$ such that the inclusion $G_e \hra G_v$ is undistorted. Such a choice of orientations yields an orientation of the boundary path of each square of $X$. The action of $H$ preserves this orientation. 

\begin{figure}[H]
	\begin{center}
		\scalebox{0.8}{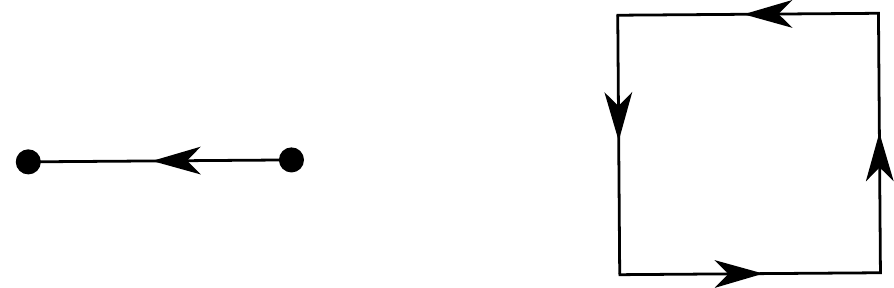}
		\caption{Left: Orientation of an edge of $X$, with the inclusion $G_e \hra G_v$ undistorted.  Right:  Orientation of the edges on the boundary of a square of $X$.}
		\label{figure}
	\end{center}
\end{figure}

\textbf{Type.} Since $H$ acts on $X$ with a square as a strict fundamental domain, there are exactly four orbits of edges.   The \textit{type} of an edge $e$ of $X$ is the unique element $i \in \bbZ/4\bbZ$ such that the stabiliser of $e$ is conjugated to $\langle a_i \rangle$. The action of $H$ on $X$ preserves the type of edges.\\

We now  give a few basic facts about the action of $H$ on $X$. For an element $h$ of $H$ acting elliptically on $X$, the \textit{fixed point set} $\mbox{Fix}(h)$ is defined as the subcomplex of $X$ spanned by faces that are pointwise fixed by $h$. Since $H$ acts freely on the squares of $X$ and the square complex $X$ is CAT(0), the fixed point sets are combinatorially convex subgraphs of $X$. 

\begin{lem}\label{lem:weak_acylindricity}
 Let $v$ be a vertex of $X$ and $e$ an edge containing it, such that the embedding $G_e \hra G_v$ is undistorted. Then for every other edge $e'$ containing $v$, we have $G_e \cap G_{e'} = \{1\}$.
\end{lem}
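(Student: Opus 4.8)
The plan is to exploit the structure of the vertex group $G_v \cong BS(1,m)$ together with the explicit description of the link of $v$ as a bipartite graph of girth $4$. Fix $v$ in the orbit of some vertex $v_{i,i+1}$, so that $G_v \cong BS(1,m_i) = \langle a, b \mid aba^{-1} = b^{m} \rangle$ with $m = m_i$. By hypothesis the inclusion $G_e \hookrightarrow G_v$ is undistorted, so after conjugating we may assume $G_e = \langle a \rangle$, the undistorted $\bbZ$ inside $BS(1,m)$ (the ``$t$-generator''). The edges of $X$ containing $v$ correspond to the vertices of $\mbox{link}(v)$, which is the quotient of the Bass--Serre tree $T$ of the splitting $G_{e_i} * G_{e_{i+1}}$ by $\ker f_{v_{i,i+1}}$; equivalently, since $G_v$ acts on this link with the two half-edges of $C_0$ at $v$ as fundamental domain, the edges at $v$ of a given type are indexed by cosets $G_v / G_{e}$ for the corresponding edge group $G_e$. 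So an arbitrary edge $e'$ at $v$ has stabiliser $G_{e'} = g\langle c\rangle g^{-1}$ for some $g \in G_v$, where $\langle c\rangle$ is one of the two ``coordinate'' cyclic subgroups $\langle a\rangle$ or $\langle b\rangle$ of $BS(1,m)$.

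With this reduction the statement becomes a purely internal computation in $BS(1,m)$: one must show that if $\langle c \rangle$ is either $\langle a \rangle$ or $\langle b \rangle$ and $g \langle c\rangle g^{-1} \neq \langle a \rangle$ (i.e.\ $e' \neq e$), then $\langle a \rangle \cap g\langle c\rangle g^{-1} = \{1\}$. I would argue via the standard normal form / the action of $BS(1,m)$ on its own Bass--Serre tree $T'$ (the tree for the ascending HNN extension $\langle b\rangle\ast_{b\sim b^m}$). The key facts are: (i) $\langle a \rangle$ is hyperbolic on $T'$, acting by translation of length $1$ on its axis, and this axis is stabilised setwise by exactly $\langle a\rangle$ (an element of $BS(1,m)$ fixing the axis pointwise is trivial, since edge stabilisers along the axis are trivial); (ii) any conjugate $g\langle b\rangle g^{-1}$ is elliptic on $T'$, fixing a single vertex; (iii) any conjugate $g\langle a\rangle g^{-1}$ is hyperbolic with its own axis, and two distinct such conjugates have axes sharing at most a bounded segment. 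A nontrivial element of $\langle a\rangle \cap g\langle c\rangle g^{-1}$ would have to be simultaneously hyperbolic along the axis of $\langle a\rangle$ and either elliptic (if $c=b$) — an immediate contradiction — or hyperbolic along a different axis (if $c = a$ and $g\langle a\rangle g^{-1}\neq\langle a\rangle$); in the latter case the intersection of the two cyclic groups would fix both axes, hence a line of edges, forcing it to be trivial. A cleaner alternative, which I would likely prefer to write up, is to use the metabelian structure directly: $BS(1,m) \cong \bbZ[\tfrac1m] \rtimes \bbZ$, where $\langle a\rangle$ is a complement $\bbZ$ and $\langle b\rangle = \bbZ \subset \bbZ[\tfrac1m]$ is in the normal abelian part. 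Then $g\langle b\rangle g^{-1} \subseteq \bbZ[\tfrac1m]$ is normal-part, and $\langle a\rangle \cap \bbZ[\tfrac1m] = \{1\}$ since $a$ projects to a generator of the quotient $\bbZ$; while $g\langle a\rangle g^{-1}$ and $\langle a\rangle$ both project isomorphically to the quotient $\bbZ$, and an element in their intersection is determined by its image there, so if the two subgroups are distinct a common nontrivial element forces $g\langle a\rangle g^{-1} = \langle a\rangle$, contradiction — one checks this using that $g = (r, k)$ with the conjugate being $\langle (r(1-m^{-1}\cdot\text{stuff}),1)\rangle$, equal to $\langle a\rangle$ iff $r \in \bbZ[\tfrac1m]$ is in the image of $(1-\sigma)$ where $\sigma$ is multiplication by $m$, i.e.\ always — so in fact one must be more careful and note that the only conjugates of $\langle a\rangle$ equal to $\langle a \rangle$ are by elements normalising it, which here is $\langle a\rangle$ itself.

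The main obstacle, and the point requiring genuine care, is precisely the case $e' \neq e$ but $G_{e'}$ still conjugate (inside $G_v$) to $\langle a \rangle$ — i.e.\ two distinct ``$a$-type'' edges at $v$. Here one cannot simply invoke an elliptic-vs-hyperbolic dichotomy; one must actually compute the normaliser/centraliser behaviour of $\langle a\rangle$ in $BS(1,m)$ and observe that $g\langle a\rangle g^{-1} = \langle a\rangle$ forces $e' = e$ (because the correspondence between edges at $v$ of type $i$ and cosets $G_v/G_e$ is injective, so distinct edges give distinct, hence properly intersecting-in-$\{1\}$, conjugates). I expect this to come down to the elementary fact that in $BS(1,m)$ the centraliser of $a$ is $\langle a\rangle$ and any two distinct $\langle a\rangle$-conjugates intersect trivially, which is the content cited as Lemma \ref{prop:morphismBS}-type input on $BS(1,m)$; once that is in hand the geometric translation to edges of $X$ at $v$ is routine via the strict fundamental domain and the identification of $\mbox{link}(v)$ with the Bass--Serre-tree quotient described in the Preliminaries.
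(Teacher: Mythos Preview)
Your reduction and the two-case split (edge $e'$ of $a$-type versus $b$-type) is exactly the paper's approach, and both of your proposed arguments (Bass--Serre tree for $BS(1,m)$, metabelian structure $\bbZ[\tfrac1m]\rtimes\bbZ$) are valid routes to the two claims. The paper's execution is considerably leaner: for the $b$-type case it simply abelianises the relation $gb^kg^{-1}=a^l$ to force $l=0$, and for the $a$-type case it just cites malnormality of $\langle a\rangle$ from the Bass--Serre tree, without any of the coset/normaliser bookkeeping you sketch. Your metabelian discussion of the $a$-type case wanders (the claim that $g\langle a\rangle g^{-1}=\langle a\rangle$ ``always'' is false---it holds iff $r=0$---and the preceding sentence about a common nontrivial element forcing equality of the two cyclic subgroups is not a general principle but needs the explicit computation you start and abandon); you would do better to drop that paragraph entirely and stick with the Bass--Serre/malnormality argument you outline first.
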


\begin{proof}
 The lemma amounts to proving the following. Let $\langle a,b | aba^{-1}=b^m \rangle$ be a presentation for the Baumslag--Solitar group $BS(1,m)$, $m\geq 2$. Then for every $g \in BS(1,m) \setminus \langle a \rangle$, we  have $g\langle a \rangle g^{-1} \cap \langle a \rangle = \{1\}$, and for every $g \in BS(1,m)$, we  have $g\langle b \rangle g^{-1} \cap \langle a \rangle = \{1\}$. 
 
 The first equality follows from the malnormality of the cyclic subgroup generated by the stable letter of $BS(1,m)$, which can be seen from the action of the group on its Bass--Serre tree.
 For the second equality, notice that an equality of the form $g b^k g^{-1} = a^l $ in $BS(1,m)$ yields $a^l= 1$ in $\langle a \rangle$ after abelianising. Thus $l=0$ and $g b^k g^{-1} = a^l = 1$ in $BS(1,m)$.
\end{proof}

\begin{cor}[weak acylindricity]\label{cor:acylindricity}
 For every two vertices $v,v'$ at combinatorial distance at least $3$ in $X$, we have $G_v \cap G_{v'}=\{1\}.$
\end{cor}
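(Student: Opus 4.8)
The plan is to derive this directly from Lemma~\ref{lem:weak_acylindricity}, using only the orientation on edges together with the fact that fixed-point sets of elliptic elements are combinatorially convex subgraphs of $X$. Let $g \in G_v \cap G_{v'}$; we want $g = 1$. Since $g$ fixes both $v$ and $v'$ and $\mathrm{Fix}(g)$ is a combinatorially convex subgraph of $X$, it contains a combinatorial geodesic $v = v_0, v_1, \dots, v_n = v'$, where $n = d(v,v') \ge 3$; in particular $g$ fixes each vertex $v_i$ and each edge $e_i := v_{i-1}v_i$ pointwise, so $g \in G_{e_i}$ for every $i$.

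Next I would localise the argument at the second edge $e_2$ of this geodesic. By definition of the orientation, exactly one of the two inclusions $G_{e_2} \hookrightarrow G_{v_1}$ and $G_{e_2} \hookrightarrow G_{v_2}$ is undistorted, according to which endpoint $e_2$ points towards. If $G_{e_2} \hookrightarrow G_{v_1}$ is the undistorted one, apply Lemma~\ref{lem:weak_acylindricity} at the vertex $v_1$ with the undistorted edge $e_2$ and the other edge $e_1$ (which contains $v_1$ and is distinct from $e_2$, since the geodesic does not backtrack): this yields $G_{e_1} \cap G_{e_2} = \{1\}$, hence $g \in G_{e_1} \cap G_{e_2} = \{1\}$. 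If instead $G_{e_2} \hookrightarrow G_{v_2}$ is undistorted, apply the lemma at $v_2$ with $e_2$ and the edge $e_3$; here $e_3$ exists precisely because $n \ge 3$, contains $v_2$, and is distinct from $e_2$, so again $g \in G_{e_2} \cap G_{e_3} = \{1\}$. In either case $g = 1$, which gives the corollary.

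The only point requiring care --- and the reason the hypothesis is ``distance at least $3$'' rather than $2$ --- is the configuration in which, at the interior vertex of a length-$2$ geodesic, both incident edges point away from it; there Lemma~\ref{lem:weak_acylindricity} has no grip, and indeed the conclusion genuinely fails (for instance for two edges of the ``$b$-type'' sharing a vertex inside a copy of $BS(1,m)$). Having an extra edge $e_3$ along the geodesic resolves exactly this case, since $e_2$ must point towards one of its two endpoints and that endpoint still lies on the geodesic, so Lemma~\ref{lem:weak_acylindricity} applies there. No serious obstacle remains beyond tracking the orientation and checking that the two edges fed to the lemma are genuinely distinct, both of which are immediate.
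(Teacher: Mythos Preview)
Your proof is correct and follows essentially the same approach as the paper: both arguments use that $\mathrm{Fix}(g)$ is a connected (indeed combinatorially convex) subgraph to produce a path of at least three edges $e_1,e_2,e_3$ through $g$-fixed vertices, then observe that the undistorted endpoint of $e_2$ must be one of the interior vertices $v_1,v_2$, where Lemma~\ref{lem:weak_acylindricity} applied to $e_2$ and the adjacent edge $e_1$ or $e_3$ forces $g=1$. Your write-up is more explicit about the orientation case split and about why distance $2$ does not suffice, but the underlying argument is the same.
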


\begin{proof}
 Recall that fixed point sets are connected subgraphs of $X$. If an element $h$ of $H$ stabilises a sequence $e_1, e_2, e_3$ of three adjacent edges, then for some $i \in \{1,3\}$ we have $G_{e_2} \cap G_{e_i} = \{1\}$ by Lemma \ref{lem:weak_acylindricity} and thus $h$ is trivial.
\end{proof}

\begin{cor}\label{cor:Fix}
 Let $h$ be a non-trivial element of $H$ acting elliptically on $X$. Then the fixed point set $\mbox{Fix}(h)$ is either a single vertex or a reunion of edges contained in the star of a single vertex, pointing towards its degree $1$ vertices, as depicted in Figure \ref{fig:fix}. In the latter case, the angle at every two edges of $\mbox{Fix}(h)$ is a non-zero multiple of $\pi$. \qed
 \end{cor}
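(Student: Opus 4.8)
The plan is to combine Lemma 2.10 (weak acylindricity) with the structure of vertex stabilisers and the CAT(0) geometry of $X$. First I would recall that $\mathrm{Fix}(h)$ is a combinatorially convex subgraph of $X$; in particular it is connected, and since $X$ is CAT(0) and two-dimensional, being convex it is itself a CAT(0) square complex with no squares (a tree, in fact), but I actually only need that it is a connected subgraph. The first step is to rule out the possibility that $\mathrm{Fix}(h)$ contains a path of three consecutive edges: this is exactly Corollary 2.11 (weak acylindricity), which forces $h$ to be trivial. Hence $\mathrm{Fix}(h)$ has combinatorial diameter at most $2$, so if it is not a single vertex it is a union of edges all containing a common vertex $v$ — i.e. a subgraph of the star of $v$.

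The second step is to determine the orientation of these edges and the angles between them. So suppose $\mathrm{Fix}(h)$ contains at least one edge, and pick $v$ as above. I claim every edge $e \subset \mathrm{Fix}(h)$ at $v$ must be oriented \emph{away} from $v$ (towards its other, valence-one endpoint in $\mathrm{Fix}(h)$). Indeed, if $e$ were oriented towards $v$, then the inclusion $G_e \hookrightarrow G_v$ would be undistorted, and Lemma 2.10 would give $G_e \cap G_{e'} = \{1\}$ for every other edge $e'$ at $v$; since $h \in G_e \cap G_{e'}$ for any second fixed edge $e'$, this would force $\mathrm{Fix}(h)$ to be the single edge $e$ — but then its endpoint $v$ has $e$ oriented towards it, and we may repeat the argument at the \emph{other} endpoint $w$ of $e$: there $e$ points away from $w$, so the inclusion $G_e\hookrightarrow G_w$ is distorted, i.e. $G_e$ is conjugate into the cyclic subgroup generated by a Baumslag--Solitar generator $b$ of $G_w \cong BS(1,m)$. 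This is consistent, so the genuinely excluded configuration is two fixed edges $e, e'$ at $v$ with $e$ pointing towards $v$; hence at most one edge of $\mathrm{Fix}(h)$ at $v$ can point towards $v$, and if $\mathrm{Fix}(h)$ has $\geq 2$ edges none of them points towards $v$. (The degenerate case of a single edge is covered by the picture in Figure 2, which allows the orientation to be read off consistently.)

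For the angle statement: the link of $v$ in $X$ is a bipartite graph of girth $4$ whose two sides correspond to the two edge-types adjacent to $v$ (the two Baumslag--Solitar generators of $G_v$, together with their $G_v$-translates). Two edges $e, e'$ at $v$ subtend an angle equal to $\pi/2$ times the combinatorial distance between the corresponding vertices of $\mathrm{link}(v)$; this distance is even precisely when $e$ and $e'$ lie on the same side of the bipartition, i.e. have the same type, and in that case the angle is a non-zero multiple of $\pi$. So it remains to check that any two edges of $\mathrm{Fix}(h)$ at $v$ have the same type. This follows because they are both fixed by $h$ and both point away from $v$: an edge pointing away from $v$ has its stabiliser conjugate into the $\langle b\rangle$-part of $G_v\cong BS(1,m)$, and by the computation in the proof of Lemma 2.10 (passing to the abelianisation of $BS(1,m)$), a non-trivial element fixing such an edge determines that edge's type uniquely among edges at $v$ of that orientation — equivalently, $G_v$ acts on the set of $b$-type edges at $v$ with trivial-intersection stabilisers in a way that forces the two fixed edges to be equal or of matching type; unwinding this gives that the two link-vertices lie on the same side, hence are at even distance, hence the angle is a non-zero multiple of $\pi$.

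The main obstacle I expect is the angle part: everything about the \emph{shape} of $\mathrm{Fix}(h)$ is an immediate consequence of weak acylindricity plus convexity, but pinning down that the fixed edges all have the same type — and hence that consecutive edges of $\mathrm{Fix}(h)$ meet at angle a multiple of $\pi$ rather than, say, $\pi/2$ — requires a careful look at the $BS(1,m)$ action on the link and at which cyclic subgroups of $BS(1,m)$ can have non-trivial intersection. One clean way to finish is: if two edges $e,e'$ at $v$ had \emph{different} types, the corresponding link-vertices would be at odd distance, and since the link has girth $4$ this distance is at least $1$; distance exactly $1$ means $e,e'$ span a square, so $h$ would fix a square, contradicting that $H$ acts freely on squares; distance $3$ or more would again produce, via convexity of $\mathrm{Fix}(h)$ inside the square complex, a geodesic in $\mathrm{Fix}(h)$ of combinatorial length $\geq 3$ realising that link-distance, contradicting weak acylindricity. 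Hence the distance is even and $\geq 2$, so the angle is a non-zero multiple of $\pi$, completing the proof.
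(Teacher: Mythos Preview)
Your overall strategy matches the paper's intended argument (the paper gives no explicit proof, only a \qed, treating the statement as immediate from Lemma~\ref{lem:weak_acylindricity} and Corollary~\ref{cor:acylindricity}). The shape and orientation parts are fine: convexity plus weak acylindricity bounds the diameter by $2$, and Lemma~\ref{lem:weak_acylindricity} forces every fixed edge at the centre $v$ to point away from $v$ as you say.

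The gap is in your treatment of the angle statement. Your ``clean way to finish'' does not work: link-distance in $\mathrm{link}(v)$ has nothing to do with combinatorial distance in $X$. Two edges $e,e'$ at $v$ with link-distance $3$ still share the vertex $v$, so there is no path of length $\geq 3$ in $\mathrm{Fix}(h)$ to contradict weak acylindricity. That case analysis should be discarded.

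The correct argument is much simpler and you almost state it. At a vertex $v$ in the orbit of $v_{i,i+1}$, the edges of type $i$ are exactly those pointing \emph{towards} $v$ (their stabiliser is a conjugate of $\langle a_i\rangle$, the undistorted generator), and the edges of type $i+1$ are exactly those pointing \emph{away} from $v$. This is immediate from the definition of orientation and of the square of groups; no appeal to abelianisations or to the proof of Lemma~\ref{lem:weak_acylindricity} is needed. Since you have already shown that every edge of $\mathrm{Fix}(h)$ at $v$ points away from $v$, they are all of type $i+1$, hence lie on the same side of the bipartite link, hence are at even link-distance $\geq 2$, hence the angle is a non-zero multiple of $\pi$. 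Your link-distance-$1$ argument (edges spanning a square, forcing $h$ to fix a square) is correct but superfluous once you observe that orientation determines type.
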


\begin{figure}[H]
\begin{center}
\scalebox{1.2}{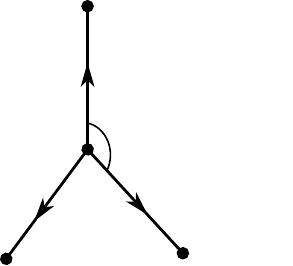}
\caption{A few oriented edges in the fixed point set of an element of $H$ stabilising an edge of $X$.}
\label{fig:fix}
\end{center}
\end{figure}

\subsection{The intersection graph}

We now prove Theorem A. In order to do that, we first need to control how solvable Baumslag--Solitar groups of the form $BS(1,m)$, $m\geq 2$ can be mapped to $H$.

\begin{lem}\label{prop:morphismBS}
 Let $m\geq 2$ be an integer and let $f:BS(1,m) \ra H$ be a non-trivial morphism. Then the image of $f$ is contained in a special subgroup of $H$. 
\end{lem}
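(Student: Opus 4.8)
The plan is to show that the image of $f$ fixes a vertex of $X$; since a subgroup of $H$ fixing a vertex $v$ is contained in the stabiliser $G_v$, which is a special subgroup, this suffices. Write $BS(1,m) = \langle t, x \mid t x t^{-1} = x^m \rangle$ and set $\alpha := f(t)$ and $\beta := f(x)$, so that the image of $f$ is $\langle \alpha, \beta \rangle$ and $\alpha \beta \alpha^{-1} = \beta^m$. We may assume $\beta \neq 1$: if $\beta = 1$ then $f$ factors through the infinite cyclic quotient $BS(1,m)/\langle\langle x \rangle\rangle$, a degenerate situation not arising in the applications of the lemma (where $f$ is injective).

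First I would check that $\beta$ acts elliptically on $X$. Every element of $H$ acts on $X$ as a semisimple isometry, $X$ being a complete CAT(0) complex with finitely many isometry types of cells on which $H$ acts cellularly (see \cite{BridsonHaefliger}). If $\beta$ were hyperbolic with translation length $\tau > 0$, then $\beta^m$ would translate along the same axis by $m\tau$; since conjugate isometries share their translation length, $\alpha \beta \alpha^{-1} = \beta^m$ would force $\tau = m\tau$, impossible for $m \geq 2$. Hence $\beta$ is elliptic, and since $\beta \neq 1$, Corollary \ref{cor:Fix} pins down $Y := \mathrm{Fix}(\beta)$ precisely: it is either a single vertex or a union of edges all emanating from one vertex. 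In either case $Y$ has a canonical vertex $v$ -- the unique vertex of valence $\geq 2$ when it has one, the only vertex when $Y$ is a point, and (ambiguously) either endpoint when $Y$ is a single edge.

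The key point is that, besides $\alpha \beta \alpha^{-1} = \beta^m$, the relation in $BS(1,m)$ also gives $x = (t^{-1} x t)^m$. Applying $f$ yields $\beta = (\alpha^{-1}\beta\alpha)^m = \alpha^{-1}\beta^m\alpha$, and therefore
$$ Y \;=\; \mathrm{Fix}(\beta) \;=\; \alpha^{-1}\bigl(\mathrm{Fix}(\beta^m)\bigr) \;\supseteq\; \alpha^{-1}\bigl(\mathrm{Fix}(\beta)\bigr) \;=\; \alpha^{-1}(Y). $$
Thus $\alpha^{-1}$ maps $Y$ into itself while carrying it isomorphically onto the subgraph $\alpha^{-1}(Y) \subseteq Y$. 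Following the canonical vertex: if $Y$ is a single vertex $v$ then immediately $\alpha^{-1}(v) = v$; if $Y$ is a star on $\geq 2$ edges with centre $v$, then $\alpha^{-1}(v)$ is the unique valence-$\geq 2$ vertex of $\alpha^{-1}(Y) \subseteq Y$, hence equals $v$; and if $Y$ is a single edge $e$, then $\alpha^{-1}(e) = e$, and since $H$ acts without inversion -- in fact preserving the orientation of edges -- $\alpha^{-1}$ fixes both endpoints of $e$. In every case $\alpha$ fixes a vertex $v$ of $Y$; and $\beta$ fixes $v$ as well, so $\langle \alpha, \beta \rangle \subseteq G_v$, a special subgroup.

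The substantive point is the interplay in the last step: ellipticity of $\beta$ together with Corollary \ref{cor:Fix} shrinks $\mathrm{Fix}(\beta)$ to essentially a single vertex, and then using the Baumslag--Solitar relation in the stronger form $\beta = \alpha^{-1}\beta^m\alpha$ rigidly constrains $\alpha$ to fix the centre of that fixed set. I expect the only genuine bookkeeping to be the small configurations -- the single-edge case, handled via the absence of inversions, and the degenerate $\beta = 1$, set aside at the outset. (An alternative, purely algebraic argument, which was pointed out to us by D. Allcock, proceeds through the amalgamated-product decomposition of $H$ and the associated Bass--Serre trees, using that the solvable group $BS(1,m)$ contains no non-abelian free subgroup; the geometric argument above is however shorter and stays within the framework developed here.)
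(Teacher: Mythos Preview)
Your proof is correct and follows essentially the same route as the paper's: show that $f(x)$ acts elliptically, use the Baumslag--Solitar relation to see that $f(t)$ carries $\mathrm{Fix}(f(x))$ into $\mathrm{Fix}(f(x)^m)$, and then invoke Corollary~\ref{cor:Fix} to pin down a common fixed vertex. The paper phrases the last step as ``$\mathrm{Fix}(f(b))$ and $\mathrm{Fix}(f(b)^m)=f(a)\,\mathrm{Fix}(f(b))$ are trees with the same centre'' rather than your explicit case analysis on the shape of $Y$, but the content is identical; your caveat about the degenerate case $\beta=1$ is likewise implicit in the paper's appeal to Corollary~\ref{cor:Fix}.
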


\begin{proof}
Let $a,b$ be generators of $BS(1,m)$ satisfying $aba^{-1}=b^m$. 
Since $X$ is a CAT(0) square complex, isometries of $X$ are either elliptic or hyperbolic by a result of Bridson \cite[Theorem A]{BridsonSemiSimple}. As $b$ generates a distorted subgroup of $BS(1,m)$, $f(b)$ is necessarily elliptic. The relation $aba^{-1}=b^m$ implies that $\mbox{Fix}(f(b))\subset \mbox{Fix}(f(b)^m) =f(a) \mbox{ Fix}(f(b))$. If $\mbox{Fix}(f(b))$ is reduced to a single vertex, this vertex is stabilised by $f(a)$. Otherwise, it follows from Corollary \ref{cor:Fix} that $\mbox{ Fix}(f(b))$ and $\mbox{Fix}(f(b)^m)=f(a) \mbox{ Fix}(f(b))$ are trees with the same centre. Thus, $f(a)$ stabilises the centre of $\mbox{Fix}(f(b))$. In both cases, $f(a)$ and $f(b)$ fix a common vertex, and the image of $f$ is thus contained in a special subgroup.
\end{proof}

\begin{cor}\label{cor:maximal_BS}
 The maximal subgroups of $H$ isomorphic to a Baumslag--Solitar group of the form $BS(1,m)$, $m\geq 2$, are exactly the special subgroups of $H$.
\end{cor}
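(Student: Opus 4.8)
The plan is to derive the corollary almost immediately from Lemma~\ref{prop:morphismBS}, after adding one small geometric observation about vertex stabilisers. Throughout, recall that the special subgroups of $H$ are exactly the stabilisers $G_v$ of vertices $v$ of $X$, and that each $G_v$ is conjugate to one of the groups $BS(1,m_i)$ with $m_i\geq 2$; in particular, every special subgroup \emph{is} a subgroup of $H$ isomorphic to a Baumslag--Solitar group of the form $BS(1,m)$, $m\geq 2$. I will read ``maximal'' as maximal in the poset of all subgroups of $H$ isomorphic to some $BS(1,m)$ with $m\geq 2$ (the integer $m$ being allowed to vary), ordered by inclusion.

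\textbf{Step 1 (no strict inclusions among vertex stabilisers).} First I would show that for every vertex $v$ of $X$, the global fixed-point set $\mathrm{Fix}(G_v):=\bigcap_{g\in G_v}\mathrm{Fix}(g)$ equals $\{v\}$. Indeed, as an intersection of convex subgraphs it is convex, hence connected, and it contains $v$; if it were strictly larger it would contain an edge $e$ incident to $v$, which would mean $G_v\subseteq G_e$. Since conversely $G_e\subseteq G_v$ for any edge $e$ incident to $v$ (an element fixing $e$ fixes its endpoints), this would force $G_v=G_e$, contradicting the fact that $G_e$ is infinite cyclic while $G_v$ is non-abelian. Consequently, if $G_v\subseteq G_w$ for vertices $v,w$, then $w\in\mathrm{Fix}(G_v)=\{v\}$, so $v=w$; hence there is no strict inclusion between distinct vertex stabilisers.

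\textbf{Step 2 (the two inclusions).} For the inclusion ``special $\Rightarrow$ maximal'': let $G_v$ be special and suppose $G_v\subseteq K$ with $K\cong BS(1,n)$, $n\geq 2$. The inclusion $K\hookrightarrow H$ is a non-trivial morphism, so Lemma~\ref{prop:morphismBS} places $K$ inside some special subgroup $G_w$; then $G_v\subseteq K\subseteq G_w$, and Step 1 gives $v=w$, whence $K=G_v$, so $G_v$ is maximal. For ``maximal $\Rightarrow$ special'': let $K$ be maximal among subgroups isomorphic to some $BS(1,m)$, $m\geq 2$. Again $K\hookrightarrow H$ is a non-trivial morphism, so Lemma~\ref{prop:morphismBS} gives $K\subseteq G_w$ for a special subgroup $G_w$; since $G_w$ is itself such a subgroup, maximality of $K$ forces $K=G_w$, so $K$ is special.

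The only non-formal ingredient is Step 1, and even there the content is light: it rests entirely on facts already recorded in the excerpt, namely that edge stabilisers are infinite cyclic and properly contained in the (non-abelian) vertex stabilisers, together with convexity/connectedness of fixed-point sets. I expect no genuine obstacle; the one point worth flagging in the write-up is that $BS(1,m)$ \emph{does} contain proper subgroups abstractly isomorphic to $BS(1,m^k)$, so maximality here is genuinely a statement about the embedding of the subgroup in $H$ and cannot be seen at the level of abstract groups alone — which is precisely why the geometric input of Step 1 (via the action on $X$) is needed, rather than a purely algebraic argument.
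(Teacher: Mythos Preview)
Your proof is correct and follows essentially the same approach as the paper: both use Lemma~\ref{prop:morphismBS} to place any $BS(1,m)$-subgroup inside a special subgroup, and then use that edge stabilisers are infinite cyclic to rule out a non-abelian subgroup sitting inside two distinct vertex stabilisers. Your Step~1 (showing $\mathrm{Fix}(G_v)=\{v\}$) simply makes explicit the uniqueness claim that the paper compresses into one sentence (``such a special subgroup is necessarily unique'' since otherwise the image would lie in an edge group).
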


\begin{proof}
Let $f:BS(1,m) \hra H$, $ m \geq 2$, be a monomorphism. By Lemma \ref{prop:morphismBS}, the image of $f$ is contained in a special subgroup of $H$. As $BS(1,m)$ cannot be mapped injectively in an edge stabiliser (which is infinite cyclic), such a special subgroup is necessarily unique.
\end{proof}

\begin{definition}[Intersection graph $\Gamma$]\label{def:graph_Gamma}
Let $\Gamma$ be the simplicial graph whose vertices are maximal subgroups of $H$ isomorphic to a Baumslag--Solitar group of the form $BS(1,m)$, $m\geq 2$, and such that two vertices are joined by an edge precisely when the two associated subgroups have a non-trivial intersection. The action of $H$ on itself by conjugation induces a left action of $H$ on $\Gamma$.
\end{definition}

By Corollary \ref{cor:maximal_BS}, the map sending a vertex $v$ of $X$ to the special subgroup $G_v$ of $H$ defines an equivariant bijection between the vertex sets  of $\Gamma$ and $X$ respectively. Moreover, it also follows from the weak acylindricity of the action (Corollary \ref{cor:acylindricity}) that this bijection extends to an equivariant quasi-isometry between the $1$-skeleton of $X$ and $\Gamma$. 

By Corollary \ref{cor:acylindricity}, the vertices of $X$ corresponding to special subgroups joined by an edge of $\Gamma$ are at distance at most $2$. The following dichotomy is an immediate consequence of Corollary \ref{cor:Fix}:

\begin{lem}\label{lem:dist}
Let $P_1, P_2$ be two special subgroups of $H$ joined by an edge of $\Gamma$ and $v_1, v_2$ the associated vertices of $X$. We have the following:
\begin{itemize}
\item The  vertices $v_1, v_2$ are at combinatorial distance  $1$ if and only if the intersection $P_1 \cap P_2$ is distorted in exactly one of the special subgroups $P_1, P_2$.
\item The vertices $v_1, v_2$ are at combinatorial distance  $2$ if and only if the intersection $P_1 \cap P_2$ is undistorted in both $P_1$ and $P_2$.\qed
\end{itemize}
\end{lem}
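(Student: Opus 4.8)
The plan is to prove Lemma \ref{lem:dist} by a careful analysis of the two edges containing a vertex, using the structure of $BS(1,m)$ and the elementary facts about fixed point sets established in Corollaries \ref{cor:acylindricity} and \ref{cor:Fix}.

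The plan is to reduce the statement to two forward implications. First note that, since $P_1\cap P_2 = G_{v_1}\cap G_{v_2}$ is non-trivial by hypothesis, Corollary \ref{cor:acylindricity} forces $d(v_1,v_2)\leq 2$; and $v_1\neq v_2$ because $P_1\neq P_2$ under the equivariant bijection between the vertices of $\Gamma$ and those of $X$. Hence $d(v_1,v_2)\in\{1,2\}$. Since the conditions ``$P_1\cap P_2$ is distorted in exactly one of $P_1,P_2$'' and ``$P_1\cap P_2$ is undistorted in both $P_1,P_2$'' are mutually exclusive, it suffices to prove the two implications: (i) if $d(v_1,v_2)=1$ then $P_1\cap P_2$ is distorted in exactly one of $P_1,P_2$; (ii) if $d(v_1,v_2)=2$ then $P_1\cap P_2$ is undistorted in both. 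The two converses then follow formally, and as a byproduct one sees that the case ``distorted in both'' never occurs.

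The common input for both cases is that fixed point sets of elements of $H$ are combinatorially convex subgraphs of $X$: thus every $g\in P_1\cap P_2$ fixes both $v_1$ and $v_2$, hence fixes the geodesic $[v_1,v_2]$ pointwise, so that $P_1\cap P_2\subseteq G_e$ for every edge $e$ lying on $[v_1,v_2]$.

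For (i), let $e$ be the edge between $v_1$ and $v_2$. The previous observation gives $P_1\cap P_2\subseteq G_e$, while conversely $G_e\subseteq G_{v_1}\cap G_{v_2}=P_1\cap P_2$; hence $P_1\cap P_2=G_e$. By the orientation property of edges of $X$, exactly one of the inclusions $G_e\hra G_{v_1}$, $G_e\hra G_{v_2}$ is undistorted and the other distorted, which is precisely the claim since $P_i=G_{v_i}$. For (ii), write the geodesic as $v_1,w,v_2$ with edges $e_1=[v_1,w]$ and $e_2=[w,v_2]$. The observation gives $\{1\}\neq P_1\cap P_2\subseteq G_{e_1}\cap G_{e_2}$. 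By Lemma \ref{lem:weak_acylindricity}, if the inclusion $G_{e_1}\hra G_w$ were undistorted we would have $G_{e_1}\cap G_{e_2}=\{1\}$; hence $G_{e_1}\hra G_w$ is distorted, i.e. $e_1$ points towards $v_1$ and $G_{e_1}\hra G_{v_1}=P_1$ is undistorted. Since $P_1\cap P_2$ is a non-trivial, hence finite-index, subgroup of the infinite cyclic group $G_{e_1}$, it is undistorted in $P_1$ as well; by symmetry it is undistorted in $P_2$.

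The main point requiring care is step (ii): one must use that every edge of $X$ carries a well-defined orientation (so that exactly one of $G_{e_1}\hra G_w$ and $G_{e_1}\hra G_{v_1}$ is undistorted), invoke weak acylindricity to exclude the wrong orientation, and then pass from ``$G_{e_1}$ undistorted in $P_1$'' to ``$P_1\cap P_2$ undistorted in $P_1$'' using that finite-index subgroups have the same distortion. Everything else is immediate from Corollaries \ref{cor:acylindricity} and \ref{cor:Fix} and the convexity of fixed point sets.
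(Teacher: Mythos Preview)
Your proof is correct and follows essentially the same approach as the paper, which simply declares the lemma an immediate consequence of Corollary~\ref{cor:Fix}. You have spelled out the details that the paper leaves implicit: in case (ii) you rederive the orientation of the edges $e_1,e_2$ via Lemma~\ref{lem:weak_acylindricity} rather than reading it off directly from the description of fixed point sets in Corollary~\ref{cor:Fix}, but this is the same argument unpacked one level deeper.
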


This yields a bi-colouring of $\Gamma$ which is clearly preserved by the action of $H$. This motivates the following definition:

\begin{definition}[Intersection graph $\Gamma_{\mathrm{dist}}$]
 Let $\Gamma_{\mathrm{dist}}$ be the subgraph of $\Gamma$ with the same vertex set, and whose edges are the edges of  $\Gamma$ between special subgroups $P_1, P_2$ such that the intersection $P_1 \cap P_2$ is distorted in exactly one of the special subgroups $P_1, P_2$. We orient an edge between $P_1$ and $P_2$ towards $P_1$ if and only if the inclusion $P_1 \cap P_2 \hra P_1$ is undistorted.
\end{definition}

The $H$-action on $\Gamma$ restricts to an action on $\Gamma_{\mathrm{dist}}$.

\begin{prop}\label{prop:canonical}
 The  map sending a vertex $v$ of $X$ to the special subgroup $G_v$ of $H$ extends to an equivariant isomorphism between the oriented $1$-skeleton of $X$ and $\Gamma_{\mathrm{dist}}$.
\end{prop}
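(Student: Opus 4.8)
The plan is to assemble Proposition~\ref{prop:canonical} from the pieces already established, checking three things: that the vertex map is an equivariant bijection, that it induces a bijection on (unoriented) edges, and that it respects orientations. The first point is essentially immediate: by Corollary~\ref{cor:maximal_BS} the assignment $v \mapsto G_v$ sends vertices of $X$ bijectively to the special subgroups of $H$, which by Definition~\ref{def:graph_Gamma} are exactly the vertices of $\Gamma$, hence of $\Gamma_{\mathrm{dist}}$; and this bijection is $H$-equivariant since $g \cdot G_v = G_{gv}$ for the conjugation action. So I would state this and move on quickly.

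For the edges, I would argue in both directions. First, suppose $e$ is an edge of $X$ with endpoints $v, v'$. Then $G_e \subset G_v \cap G_{v'}$ is a non-trivial (infinite cyclic) subgroup, so $G_v \cap G_{v'} \neq \{1\}$ and $\{G_v, G_{v'}\}$ spans an edge of $\Gamma$; moreover, by the orientation discussion preceding Figure~\ref{figure}, the inclusion $G_e \hookrightarrow G_v$ is undistorted for exactly one endpoint, say $v$, and since $G_e$ has finite index in $G_v \cap G_{v'}$ (the endpoints are at combinatorial distance $1$, and $G_v \cap G_{v'}$ fixes the edge $e$ pointwise, so equals $G_e$ — here I use weak acylindricity, Corollary~\ref{cor:acylindricity} is too weak but the point is that a group fixing two adjacent vertices fixes the edge between them), one sees that $G_v \cap G_{v'} = G_e$ is distorted in exactly one of $G_v, G_{v'}$. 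Hence $\{G_v, G_{v'}\}$ is an edge of $\Gamma_{\mathrm{dist}}$, oriented towards $v$, which matches the orientation of $e$. Conversely, if $P_1, P_2$ are special subgroups joined by an edge of $\Gamma_{\mathrm{dist}}$ with associated vertices $v_1, v_2$ of $X$, then by Lemma~\ref{lem:dist} the condition that $P_1 \cap P_2$ be distorted in exactly one of $P_1, P_2$ is precisely the condition that $v_1, v_2$ be at combinatorial distance $1$, i.e. that they span an edge of $X$; and the orientation convention for $\Gamma_{\mathrm{dist}}$ (towards $P_1$ when $P_1 \cap P_2 \hookrightarrow P_1$ is undistorted) matches the orientation convention for $X$ by the same computation as above. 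This gives a bijection on oriented edges compatible with the vertex bijection, which is $H$-equivariant because all the structures involved (the subgroups $G_v$, the intersections, distortion, the $H$-action) are preserved by conjugation.

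The main obstacle — though it is really a bookkeeping point rather than a genuine difficulty — is making sure the edge stabiliser $G_e$ is genuinely equal to $G_v \cap G_{v'}$, and not just contained in it, so that "distorted in exactly one of $P_1, P_2$" refers to the right subgroup. This follows because $H$ acts without inversion with a square as strict fundamental domain: an element fixing both endpoints of $e$ maps $e$ to an edge with the same endpoints, and since distinct edges of $X$ have distinct pairs of endpoints (there are no bigons in a CAT(0) square complex), it fixes $e$. Once this identification is in hand, Lemma~\ref{lem:dist} does all the combinatorial work, and the proposition follows by combining it with Corollary~\ref{cor:maximal_BS} and the orientation conventions; the proof is therefore quite short. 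I would close by noting explicitly that this refines the equivariant quasi-isometry between $\Gamma$ and the $1$-skeleton of $X$ observed after Corollary~\ref{cor:acylindricity} into an equivariant isomorphism of oriented graphs, which is the content of Theorem~A once one also checks (done above) that orientations correspond.
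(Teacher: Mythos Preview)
Your proposal is correct and follows essentially the same route as the paper: the vertex bijection from Corollary~\ref{cor:maximal_BS}, the forward map on edges from the orientation setup, and Lemma~\ref{lem:dist} for the converse. The paper's proof is terser---it does not spell out the identification $G_e = G_v \cap G_{v'}$ or the orientation match, both of which you take the trouble to verify---but the underlying argument is the same.
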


\begin{proof}
We already mentioned that this map is a bijection at the level of vertices. Moreover, it induces a combinatorial map $X^{(1)} \ra \Gamma_\mathrm{dist}$ by construction of $\Gamma_\mathrm{dist}$. Consider two special subgroups $P, P'$ at distance $1$ in $\Gamma_\mathrm{dist}$. By Lemma \ref{lem:dist}, the associated vertices of $X$ are at distance $1$, which concludes the proof.
\end{proof}

\section{Geometry of the morphisms}\label{sec:morphisms}

In this section, we prove Theorems B, C, and D.

It follows immediately from Proposition \ref{prop:canonical} that an automorphism $\alpha$ of $H$ induces an automorphism $\alpha_*$ of $X$. We now describe some automorphisms of $H$ and the induced automorphism of $X$. We start by describing the automorphisms of $X$ associated to inner automorphisms of $H$.

\begin{lem}
 A Higman-like group has a trivial centre. 
\end{lem}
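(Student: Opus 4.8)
The plan is to show that any central element $z$ of $H$ must act elliptically on $X$ and then derive a contradiction from the structure of fixed-point sets together with the weak acylindricity already established. Since $z$ is central, its fixed-point set $\mathrm{Fix}(z)$ (if $z$ is elliptic) is $H$-invariant: for any $h \in H$, $h \cdot \mathrm{Fix}(z) = \mathrm{Fix}(hzh^{-1}) = \mathrm{Fix}(z)$. But $H$ acts cocompactly on $X$ with a single square as strict fundamental domain, so no proper non-empty subcomplex of $X$ can be $H$-invariant; in particular $\mathrm{Fix}(z)$ would have to be all of $X$, forcing $z$ to fix every edge and hence be trivial by Lemma \ref{lem:weak_acylindricity} (an element fixing two adjacent edges, one of which is undistorted at their common vertex, is trivial).

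So the crux is to rule out the possibility that a non-trivial central $z$ acts hyperbolically. First I would note that $H$ is generated by the standard generators $a_i$, each of which is conjugate into an edge stabiliser and hence acts elliptically (an edge stabiliser fixes that edge). If $z$ is central it commutes with each $a_i$; I would then argue that an element commuting with an elliptic isometry $a_i$ must preserve $\mathrm{Fix}(a_i)$, and by Corollary \ref{cor:Fix} that fixed-point set is a bounded subcomplex (a vertex or a star of edges around a single vertex). An element preserving a non-empty bounded subcomplex of a CAT(0) space fixes its circumcentre, hence is itself elliptic. Therefore $z$ is elliptic, and we are back in the situation of the previous paragraph.

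Putting this together: assume $z \neq 1$ is central. By the argument above $z$ is elliptic, $\mathrm{Fix}(z)$ is non-empty and $H$-invariant, hence equals $X$, so $z$ fixes all edges of $X$; picking any vertex $v$ and an edge $e$ at $v$ with $G_e \hookrightarrow G_v$ undistorted, together with any other edge $e'$ at $v$, Lemma \ref{lem:weak_acylindricity} gives $z \in G_e \cap G_{e'} = \{1\}$, a contradiction. Hence $Z(H) = \{1\}$.

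The main obstacle I anticipate is the step showing $z$ is elliptic — specifically, making precise that commuting with a collection of elliptic elements whose fixed-point sets are uniformly bounded forces ellipticity. One clean way around it, avoiding any delicate CAT(0) geometry of hyperbolic isometries, is simply to use the $H$-invariance directly: whether or not $z$ is a priori elliptic, if $z \neq 1$ then its minimal set (the fixed-point set if elliptic, or the union of axes if hyperbolic) is a non-empty $H$-invariant subset of $X$; cocompactness of the action with a single square as fundamental domain forces this set to be dense, and closedness forces it to be all of $X$. If $z$ were hyperbolic its minimal set would be a union of geodesic lines, which cannot be all of a $2$-dimensional complex containing squares, so $z$ is elliptic and fixes everything, and Lemma \ref{lem:weak_acylindricity} finishes as before. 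Either route works; I would present whichever reads most cleanly given the machinery already set up, leaning on the $H$-invariance plus cocompactness as the decisive point.
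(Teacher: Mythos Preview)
Your argument that a central $z$ must be elliptic is correct and is essentially the paper's: $z$ commutes with each generator $a_i$, hence preserves the bounded set $\mathrm{Fix}(a_i)$ (bounded by Corollary~\ref{cor:Fix}), so $z$ fixes its circumcentre. The paper phrases this contrapositively --- a hyperbolic $z$ would force $\mathrm{Fix}(a_i)$ to be unbounded, contradicting weak acylindricity --- but it is the same idea.

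The elliptic case, however, has a genuine gap. Your claim that ``no proper non-empty subcomplex of $X$ can be $H$-invariant'' is false: the $1$-skeleton, any single vertex orbit, or the union of all edges of a fixed type are proper $H$-invariant subcomplexes (the fundamental domain being a single square only means every \emph{square} is in one orbit). So $H$-invariance of $\mathrm{Fix}(z)$ does not give $\mathrm{Fix}(z)=X$. Your alternative via the minimal set has the same defect, and the further assertion that ``a union of geodesic lines cannot be all of a $2$-dimensional complex'' is also unjustified: $\mathrm{Min}(z)$ splits as $Y\times\bbR$ and can perfectly well be $2$-dimensional. The repair is immediate: once $z\neq 1$ is elliptic, $\mathrm{Fix}(z)$ has diameter at most $2$ by Corollary~\ref{cor:Fix}, yet it must contain the full (unbounded) $H$-orbit of any of its vertices --- contradiction.

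The paper's treatment of the elliptic case is different and shorter: if $z$ fixes a vertex $v$ then $z\in G_v$, and since $z$ is central in $H$ it lies in the centre of $G_v\cong BS(1,m)$, which is trivial. This bypasses the $H$-invariance discussion altogether.
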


\begin{proof}
 An element in the centre defines an isometry of $X$ which is either elliptic or hyperbolic. If such an element defined a hyperbolic isometry, the fixed point set of an arbitrary elliptic isometry would be stable under the action of this hyperbolic isometry. Such a fixed point set would then be unbounded, which contradicts the weak acylindricity (Corollary \ref{cor:acylindricity}). If such an element is elliptic, it then defines an element in the centre of some $BS(1,m)$, $m\geq 2$. As such a group has a trivial centre, the result follows. 
\end{proof}

\begin{cor}
 The inner automorphism group of $H$ is isomorphic to $H$.\qed
\end{cor}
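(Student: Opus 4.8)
The plan is to deduce this immediately from the preceding lemma together with the standard description of the inner automorphism group. First I would recall that for any group $G$ the conjugation map $G \to Aut(G)$, sending $g$ to the automorphism $x \mapsto gxg^{-1}$, is a homomorphism whose image is by definition $Inn(G)$ and whose kernel is precisely the centre $Z(G)$; the first isomorphism theorem then yields $Inn(G) \cong G/Z(G)$. Applying this to $H$ and invoking the previous lemma, which asserts that a Higman-like group has trivial centre, one obtains $Inn(H) \cong H/Z(H) = H/\{1\} \cong H$.

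There is no genuine obstacle in this step: all the real work has already been carried out in establishing that $Z(H)$ is trivial, which itself rests on the weak acylindricity of the action on $X$ (Corollary \ref{cor:acylindricity}), ruling out a central hyperbolic element, and on the triviality of the centre of $BS(1,m)$, ruling out a central elliptic one. The only point worth flagging is that this identification is the one that will subsequently be combined with the analysis of how automorphisms of $H$ act on $X$ to produce the splitting $Aut(H) \cong Inn(H) \rtimes \bbZ/4\bbZ$ of Theorem C, so it is natural to record it here as a standalone consequence.
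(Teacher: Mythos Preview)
Your proposal is correct and is exactly the intended argument: the paper marks this corollary with a \qed\ and gives no further proof, precisely because it follows immediately from the preceding lemma via the standard isomorphism $Inn(G)\cong G/Z(G)$. There is nothing to add or change.
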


\begin{lem}
If $\alpha$ is the conjugation by an element $g$ of $H$, then the induced automorphism of $X$ is the action of $g$ on $X$ coming from the action of $H$ on $X$. \qed
\end{lem}

We now show that we can associate a combinatorial map $X \ra X$ to every \textit{non-trivial} morphism $H \ra H$.

\begin{prop}\label{prop:induced_map}
A non-trivial morphism $f: H \ra H$ induces, in a functorial way, a combinatorial map $f_*: X \ra X$ that restricts to an orientation-preserving isomorphism on each square of $X$.
\end{prop}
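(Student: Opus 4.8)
The plan is to build $f_*$ vertex-by-vertex, then on edges, then on squares, using at each stage the intrinsic description of $X$ provided by Theorem~A (Proposition~\ref{prop:canonical}), together with Lemma~\ref{prop:morphismBS} on morphisms of Baumslag--Solitar groups. First I would define $f_*$ on vertices. Let $v$ be a vertex of $X$, so $G_v$ is a special subgroup, hence a maximal $BS(1,m)$ subgroup of $H$ by Corollary~\ref{cor:maximal_BS}. The restriction $f|_{G_v}: G_v \to H$ is a morphism from $BS(1,m)$ to $H$; I must first check it is non-trivial. This is the first point requiring care: a priori $f$ could kill some vertex group. I would rule this out by observing that $G_v$ contains a standard generator $a_i$ up to conjugacy, and that the $a_i$ generate $H$, so if $f(G_v) = \{1\}$ for even one vertex then by applying the relations $a_i a_{i+1} a_i^{-1} = a_{i+1}^{m_i}$ and the $H$-transitivity on edge-types one forces $f$ to be trivial on all standard generators, contradicting non-triviality of $f$. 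Granting $f|_{G_v}$ non-trivial, Lemma~\ref{prop:morphismBS} gives a special subgroup containing $f(G_v)$, and by maximality (Corollary~\ref{cor:maximal_BS}) this special subgroup is \emph{unique}; define $f_*(v)$ to be the corresponding vertex of $X$. Functoriality in $f$ is immediate from uniqueness.

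Next I would extend $f_*$ to edges. Let $e$ be an edge of $X$ with endpoints $v, v'$; then $G_e = G_v \cap G_{v'}$ is infinite cyclic and non-trivial, so $f(G_e) \subseteq f(G_v) \cap f(G_{v'}) = G_{f_*(v)} \cap G_{f_*(v')}$, and this latter group is non-trivial (it contains $f(G_e) \neq \{1\}$ since $f$ is non-trivial on $G_v$ and, as $G_e$ meets the centre-free... more carefully: one checks $f|_{G_e}$ is non-trivial because $G_e$ is not in the kernel — if it were, $f(G_v)$ would be a proper quotient of $BS(1,m)$, but every proper quotient of $BS(1,m)$ is solvable-by-finite... actually the cleanest route is: $f(G_e)\neq\{1\}$ because otherwise $f(G_v)$ is a quotient of $BS(1,m)/\langle\langle a^k\rangle\rangle$ or $/\langle\langle b^k\rangle\rangle$, and one shows directly this cannot be a non-trivial subgroup of a Higman-like group consistently across all four vertex groups). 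So $G_{f_*(v)}$ and $G_{f_*(v')}$ are distinct special subgroups with non-trivial intersection, hence joined by an edge of $\Gamma$. I would then argue that this edge in fact lies in $\Gamma_{\mathrm{dist}}$: since $G_e \hookrightarrow G_v$ is undistorted for exactly one endpoint, say $v$, and $f$ sends the stable-letter-type generator to a stable-letter-type generator (here I use that a non-trivial morphism $BS(1,m) \to H$ cannot send the stable letter $a$ to an element generating a distorted, hence solvable-by-cyclic-normal, subgroup — one sees from Lemma~\ref{prop:morphismBS}'s proof that $f(a)$ must act with the right distortion behaviour relative to $f(b)$), the image edge has the distortion asymmetry. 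Thus via Proposition~\ref{prop:canonical} we get an edge of $X$ joining $f_*(v)$ and $f_*(v')$, and the orientation is preserved by construction.

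Then squares: a square $\sigma$ of $X$ has trivial stabiliser and is determined by its four oriented boundary edges forming a directed $4$-cycle in $\Gamma_{\mathrm{dist}}$ of the standard type; $f_*$ sends these to four oriented edges that again form such a cycle (because the four vertex groups map to four special subgroups with the cyclic intersection pattern inherited from the relations of $H$), and since $X$ is CAT(0) there is at most one square on a given boundary $4$-cycle, and by the structure of $X$ there is exactly one. Declare $f_*(\sigma)$ to be that square; the map is then combinatorial and restricts to an orientation-preserving isomorphism on each square. I expect the main obstacle to be the bookkeeping in the two places flagged above: (i) showing $f$ is non-trivial on every vertex group and every edge group, which requires propagating non-triviality around the $4$-cycle of defining relations of $H$ — the key leverage is that the $a_i$ generate $H$ and satisfy $a_i a_{i+1} a_i^{-1} = a_{i+1}^{m_i}$, so a vanishing somewhere forces vanishing everywhere; and (ii) checking the image edges land in $\Gamma_{\mathrm{dist}}$ rather than merely in $\Gamma$, i.e.\ that $f$ respects the distorted/undistorted dichotomy on edge inclusions, which follows by re-running the argument of Lemma~\ref{prop:morphismBS} to see that a non-trivial morphism of $BS(1,m)$ into $H$ carries the conjugacy class of the stable letter to that of a stable letter. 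Once these are in hand, functoriality, combinatoriality, and orientation-preservation are formal.
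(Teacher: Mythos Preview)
Your approach is essentially the paper's: build $f_*$ on vertices via Lemma~\ref{prop:morphismBS}, then on edges via the distortion dichotomy of Lemma~\ref{lem:dist}, then on squares. The paper records exactly these three steps as Lemmas~\ref{lem:induced_0}, \ref{lem:induced_1}, \ref{lem:induced_2}, preceded by Lemma~\ref{lem:generator_not_killed} (killing one $a_i$ kills them all), which is the clean way to handle the non-triviality issues you circle around in your parenthetical digressions about quotients of $BS(1,m)$.

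There is, however, a real gap in your square step. You write that ``the four vertex groups map to four special subgroups with the cyclic intersection pattern'', but you have not shown the four image vertices are \emph{distinct}. In the edge step you simply assert ``$G_{f_*(v)}$ and $G_{f_*(v')}$ are distinct'' without proof; and the possibility that two \emph{opposite} vertices of the square collapse is never addressed at all. If $f_*(v)=f_*(v')$ for opposite vertices then $f$ sends all of $H=\langle G_v,G_{v'}\rangle$ into a single $BS(1,m)$, and you would get a closed edge-path of length~$4$ that is degenerate rather than a $4$-cycle bounding a square. The paper rules out both collapses in Lemma~\ref{lem:induced_2} by a derived-subgroup argument: if adjacent (resp.\ opposite) vertices collapse, then $f$ maps $\langle a_{i-1},a_i,a_{i+1}\rangle$ (resp.\ all of $H$) into some $BS(1,m)$; the cyclic relations force $a_i, a_{i+1}$ into the derived subgroup, hence $f$ maps $\langle a_i,a_{i+1}\rangle$ into the \emph{abelian} torsion-free derived subgroup of $BS(1,m)$, and the relation $a_ia_{i+1}a_i^{-1}=a_{i+1}^{m_i}$ then kills $a_{i+1}$, contradicting Lemma~\ref{lem:generator_not_killed}. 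This is the missing ingredient.

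A smaller point: your obstacle~(ii) is easier than you expect. You do not need to show that $f$ sends ``stable-letter-type to stable-letter-type''. The paper simply observes that the relation $a b a^{-1}=b^m$ in $G_v$ is carried by $f$ to $f(a)f(b)f(a)^{-1}=f(b)^m$ inside $G_{f_*(v)}$, so $\langle f(b)\rangle$ is distorted there; hence $f(G_e)\subset G_{f_*(v)}\cap G_{f_*(v')}$ is distorted in $G_{f_*(v)}$, and Lemma~\ref{lem:dist} gives distance exactly~$1$ with the correct orientation. No analysis of where the stable letter lands is needed.
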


The proof of Proposition \ref{prop:induced_map} will be split into several lemmas. Before starting the proof, we mention a simple but useful remark. 

\begin{lem}\label{lem:generator_not_killed}
Let $f:H \ra H$ be a non-trivial morphism. Then $f$ does not kill any of the standard generators $a_i$.
\end{lem}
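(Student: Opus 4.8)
The goal is to show that if $f : H \to H$ is non-trivial, then $f(a_i) \neq 1$ for every standard generator $a_i$. The key structural input is the cyclic family of Baumslag--Solitar relations $a_i a_{i+1} a_i^{-1} = a_{i+1}^{m_i}$ connecting consecutive generators around the square, which will let a single vanishing $f(a_i)$ propagate to its neighbours. The plan is to argue by contradiction: suppose $f(a_i) = 1$ for some $i$, and show this forces $f$ to kill all the standard generators, hence to be trivial.

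\emph{Step 1: a killed generator kills its neighbour.} Suppose $f(a_{i}) = 1$. Apply $f$ to the relation $a_{i-1} a_{i} a_{i-1}^{-1} = a_{i}^{m_{i-1}}$: the right-hand side becomes $f(a_i)^{m_{i-1}} = 1$, and the left-hand side becomes $f(a_{i-1}) f(a_i) f(a_{i-1})^{-1} = 1$ as well, so this gives no information about $f(a_{i-1})$. Instead apply $f$ to the relation in which $a_i$ is the \emph{conjugating} letter, namely $a_i a_{i+1} a_i^{-1} = a_{i+1}^{m_i}$. This yields $f(a_{i+1}) = f(a_{i+1})^{m_i}$, i.e. $f(a_{i+1})^{m_i - 1} = 1$, so $f(a_{i+1})$ is a torsion element of $H$. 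But $H$ is torsion-free: it acts on the CAT(0) square complex $X$ with stabilisers isomorphic to the torsion-free groups $BS(1,m_i)$ (vertex stabilisers) or infinite cyclic groups (edge stabilisers), and any finite-order element would fix a point and hence inject into one of these torsion-free groups. Since $m_i - 1 \geq 1$, we conclude $f(a_{i+1}) = 1$.

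\emph{Step 2: propagate around the square and conclude.} Iterating Step 1 around the cycle $\bbZ/4\bbZ$ shows $f(a_{i+1}) = f(a_{i+2}) = f(a_{i+3}) = 1$, hence $f$ kills all four standard generators and is therefore the trivial morphism, contradicting the hypothesis. This completes the proof.

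\emph{Expected main obstacle.} The only real subtlety is the torsion-freeness of $H$, which is not spelled out as a numbered statement in the excerpt; I would justify it in one line from the developability/CAT(0) setup (an elliptic element of finite order sits inside a vertex or edge stabiliser, all of which are torsion-free, and a hyperbolic element has infinite order). One should also double-check the indexing convention in the presentation $a_i a_{i+1} a_i^{-1} = a_{i+1}^{m_i}$ to make sure $a_i$ really does appear as a conjugator in the relation indexed by $i$ — if the paper's convention were the opposite, Step 1 would instead start from the relation indexed $i+1$ or use $a_{i-1}$ as conjugator, but the cyclic propagation argument is unaffected.
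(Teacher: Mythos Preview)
Your argument is correct and is precisely the one the paper has in mind: its proof is the single sentence ``If one of the $a_i$ was killed, using the cyclic relations defining $H$ would imply that each $a_i$ is killed by $f$, and thus would be trivial,'' and you have faithfully unpacked this by applying $f$ to the relation $a_i a_{i+1} a_i^{-1}=a_{i+1}^{m_i}$ and invoking torsion-freeness of $H$ to force $f(a_{i+1})=1$. Your justification of torsion-freeness via the action on $X$ with torsion-free cell stabilisers is also the right one (and the paper itself appeals to ``as $H$ is torsion-free'' elsewhere without further comment).
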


\begin{proof}
 If one of the $a_i$ was killed, using the cyclic relations defining $H$ would imply that each $a_i$ is killed by $f$, and thus would be trivial.
\end{proof}

Let $f:H \ra H$ be a non-trivial morphism.

\begin{lem}\label{lem:induced_0}
 The morphism $f$ sends each special subgroup of $H$ to a unique special subgroup of $H$.
\end{lem}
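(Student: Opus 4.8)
The plan is to reduce the statement to Lemma \ref{prop:morphismBS} and Lemma \ref{lem:generator_not_killed}, the only genuinely new ingredient being a non-abelianness argument. Let $P$ be a special subgroup of $H$. By Corollary \ref{cor:maximal_BS} we may write $P = G_v$ for a vertex $v$ of $X$, and since $H$ acts on $X$ with the fundamental square as a strict fundamental domain, there are $g \in H$ and $j \in \bbZ/4\bbZ$ with $v = g\cdot v_{j,j+1}$, so that $P = g\,G_{v_{j,j+1}}\,g^{-1}$ is generated by the conjugates of standard generators $s := g a_j g^{-1}$ and $t := g a_{j+1} g^{-1}$, subject to $s t s^{-1} = t^{m_j}$ with $m_j \geq 2$; in particular $P \cong BS(1,m_j)$. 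By Lemma \ref{lem:generator_not_killed} we have $f(a_{j+1}) \neq 1$, hence $f(t) \neq 1$ and $f|_P : BS(1,m_j) \ra H$ is non-trivial, so Lemma \ref{prop:morphismBS} produces a special subgroup of $H$ containing $f(P)$. This settles existence.

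For uniqueness, I would first record that $f(P)$ is \emph{non-abelian}. Indeed, if $f(P)$ were abelian, the relation $s t s^{-1} = t^{m_j}$ would give $f(t) = f(t)^{m_j}$, that is $f(t)^{m_j-1} = 1$; but $f(t) \neq 1$, the group $H$ is torsion-free (a torsion element of $H$ fixes a point of $X$, hence lies in a torsion-free cell stabiliser), and $m_j - 1 \geq 1$, a contradiction.

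Next I would show that for any two \emph{distinct} vertices $w, w'$ of $X$ the intersection $G_w \cap G_{w'}$ is abelian. If the combinatorial distance between $w$ and $w'$ is at least $3$, this intersection is trivial by Corollary \ref{cor:acylindricity}. Otherwise any non-trivial $h \in G_w \cap G_{w'}$ is elliptic with $w, w' \in \mathrm{Fix}(h)$, so $\mathrm{Fix}(h)$ is not a single vertex and, by Corollary \ref{cor:Fix}, is a union of edges inside the star of a single vertex with pairwise angles a multiple of $\pi$; hence $\mathrm{Fix}(h)$ is a combinatorial geodesic of length at most $2$. Uniqueness of geodesics in the CAT(0) complex $X$ then forces the edge of $\mathrm{Fix}(h)$ issuing from $w$ to be the first edge of the geodesic $[w,w']$, independently of $h$, so $G_w \cap G_{w'}$ is contained in a single (infinite cyclic) edge stabiliser and is in particular abelian.

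Combining the last two points finishes the argument: if $f(P)$ lay in two distinct special subgroups $G_w$ and $G_{w'}$, then $f(P) \subseteq G_w \cap G_{w'}$ would be abelian, contradicting the non-abelianness of $f(P)$. Hence $f(P)$ is contained in a unique special subgroup of $H$, as claimed. The only delicate step is the geometric claim that $G_w \cap G_{w'}$ is abelian for $w \neq w'$ — more precisely, the verification via uniqueness of CAT(0) geodesics that all of its non-trivial elements fix one and the same edge; the rest is formal.
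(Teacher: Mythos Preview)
Your argument is correct and follows exactly the paper's strategy: existence via Lemma~\ref{prop:morphismBS}, and uniqueness by showing that $f(P)$ is non-abelian (torsion-freeness plus the Baumslag--Solitar relation forcing $f(t)^{m_j-1}=1$) while the intersection $G_w \cap G_{w'}$ of any two distinct special subgroups lies in an edge stabiliser and is therefore abelian. The paper simply asserts this last point (``$f(P)$ would be contained in some edge group''), so your geometric justification via Corollary~\ref{cor:Fix} and uniqueness of CAT(0) geodesics is more detailed than the original; the only imprecision is the phrase ``$\mathrm{Fix}(h)$ is a combinatorial geodesic of length at most $2$''---in general $\mathrm{Fix}(h)$ can be a star with several branches---but what you actually use, namely that the unique CAT(0) geodesic $[w,w']$ lies in $\mathrm{Fix}(h)$ and hence its first edge at $w$ is determined independently of $h$, is correct.
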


\begin{proof}
By Lemma \ref{prop:morphismBS}, for each special subgroup $P$ of $H$, the image is contained in a special subgroup $P'$ of $H$. If that special subgroup $P'$ was not unique, then  $f(P)$ would be contained in some edge group, which is abelian. But in a Baumslag--Solitar group $BS(1,m) \cong \langle a,b | aba^{-1}=b^m\rangle$, $m \geq 2$, the defining relation implies that the generator $b$ is killed by $f$ as $H$ is torsion-free, which  would then contradict Lemma \ref{lem:generator_not_killed}. 
\end{proof}

In particular, $f$ induces a map $f_*: X^{(0)} \ra X^{(0)}$. 

\begin{lem}\label{lem:induced_1}
The map $f_*: X^{(0)} \ra X^{(0)}$ extends to a combinatorial map $f_*: X^{(1)} \ra X^{(1)}$ between the directed $1$-skeletons. 
\end{lem}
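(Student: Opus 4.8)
The plan is to show that $f_*$ sends adjacent vertices of $X$ to adjacent vertices, and respects the orientation on edges, so that it extends to a combinatorial map of directed $1$-skeletons. First I would take an edge $e$ of $X$ with endpoints $v, v'$, oriented so that $G_e \hookrightarrow G_v$ is undistorted and $G_e \hookrightarrow G_{v'}$ is distorted; thus $G_e$ is infinite cyclic, generated by some conjugate of a standard generator $a_i$, and inside $G_{v'} \cong BS(1,m)$ it plays the role of the ``$b$''-generator (the one generating a distorted cyclic subgroup). By Lemma \ref{lem:induced_0}, $f(G_v)$ and $f(G_{v'})$ lie in unique special subgroups $P$, $P'$ of $H$, which are exactly the images $f_*(v)$, $f_*(v')$. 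The key point is that $G_e \leq G_v \cap G_{v'}$, so $f(G_e)$ is a subgroup of $P \cap P'$; and by Lemma \ref{lem:generator_not_killed} together with the torsion-freeness argument already used in Lemma \ref{lem:induced_0} (the $b$-generator of a $BS(1,m)$ subgroup cannot be killed without killing a standard generator), $f(G_e)$ is \emph{non-trivial}. Hence $P \cap P' \neq \{1\}$, so $P$ and $P'$ are joined by an edge of the intersection graph $\Gamma$, which via Proposition \ref{prop:canonical} means $f_*(v)$ and $f_*(v')$ are at combinatorial distance $1$ or $2$ in $X$.

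To rule out distance $2$ and simultaneously pin down the orientation, I would argue as follows. In $G_{v'} \cong BS(1,m)$, the element generating $G_e$ is distorted; since $f$ is a homomorphism, its image $f(G_e)$ is distorted in $P' = f_*(v')$. On the other side, $G_e$ is undistorted in $G_v$, but here I need more care: I would instead observe that the pair $(G_v, G_{v'})$ together with the inclusions of $G_e$ records, up to conjugacy, the standard Baumslag--Solitar relation $a b a^{-1} = b^m$ with $a$ (a generator of) the image of $G_v$ and $b$ a generator of $G_e$; applying $f$ gives a relation $f(a) f(b) f(a)^{-1} = f(b)^m$ inside $H$ with $f(b) \neq 1$. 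Now $f(b) \in P \cap P'$, $f(b)$ is distorted in $P'$, and I claim $f(b)$ is \emph{undistorted} in $P = f_*(v)$: indeed the relation exhibits $f(b)$ as conjugated inside $P$ to its own $m$-th power by an element $f(a)$ of $P$ that is itself non-trivial (again by Lemma \ref{lem:generator_not_killed} applied through the image), and in a $BS(1,m)$ the only cyclic subgroups on which such a relation can be realised with a non-trivial conjugator are the undistorted ones (the ``$a$-side''); this is the computation underlying Lemma \ref{lem:dist}. Combining: $f(b) \in P \cap P'$ is distorted in $P'$ but undistorted in $P$, so by Lemma \ref{lem:dist} the vertices $f_*(v)$ and $f_*(v')$ are at distance exactly $1$, and the corresponding edge of $\Gamma_{\mathrm{dist}}$ is oriented towards $P = f_*(v)$. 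Thus $f_*$ sends the oriented edge $(v' \to v)$ to the oriented edge $(f_*(v') \to f_*(v))$, which is precisely the statement that $f_*$ extends to a combinatorial map $X^{(1)} \to X^{(1)}$ of directed $1$-skeletons. Well-definedness and functoriality are immediate from the canonical description of $\Gamma_{\mathrm{dist}}$ in Proposition \ref{prop:canonical}.

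The main obstacle I anticipate is the orientation/distortion bookkeeping in the middle step: one must carefully track which generator of $G_e$ is the distorted one on the $G_{v'}$-side and verify that applying $f$ genuinely preserves the ``distorted in exactly one endpoint'' dichotomy rather than collapsing the edge to distance $2$. This amounts to a careful invocation of the elementary structure of $BS(1,m)$ — specifically that the malnormal ``stable letter'' subgroup is the undistorted side and that the normal subgroup generated by the other generator is the distorted side, together with the fact (via Lemma \ref{lem:generator_not_killed}) that $f$ cannot kill the relevant elements. Everything else is formal once Proposition \ref{prop:canonical} and Lemma \ref{lem:dist} are in hand.
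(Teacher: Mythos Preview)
Your overall strategy coincides with the paper's: show that $f(G_e)$ is non-trivial so that $f_*(v)$ and $f_*(v')$ are at distance at most $2$, then use distortion to pin down distance $1$ and the orientation via Lemma~\ref{lem:dist}. The part where you show $f(G_e)$ is distorted in $P' = G_{f_*(v')}$ is correct and is exactly what the paper does.

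The error is in your attempt to show directly that $f(b)$ is \emph{undistorted} in $P = G_{f_*(v)}$. You assert that ``the relation exhibits $f(b)$ as conjugated inside $P$ to its own $m$-th power by an element $f(a)$ of $P$'', but the relation $aba^{-1}=b^m$ lives in $G_{v'}$ (that is where $b$, the generator of $G_e$, is the distorted generator), so $a \in G_{v'}$ and hence $f(a) \in P'$, not $P$. Moreover, even if such a relation did hold inside some $BS(1,m')$, it would force the conjugated element to be \emph{distorted} (it lies in the kernel of the map to $\bbZ$), not undistorted; your dichotomy is reversed. Fortunately this step is unnecessary. The paper simply observes that since $G_e$ is distorted in $G_{v'}$, its image $f(G_e) \subset G_{f_*(v)} \cap G_{f_*(v')}$ is distorted in $G_{f_*(v')}$; Lemma~\ref{lem:dist} then immediately rules out distance $2$ (where the intersection would be undistorted in both) and gives distance exactly $1$. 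At distance $1$ the intersection is distorted in exactly one endpoint, so it is automatically undistorted in $G_{f_*(v)}$, and orientation preservation follows without any separate argument on the $P$-side.
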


\begin{proof}
Let $v$, $v'$ be two vertices of $X$ contained in an edge $e$. As $G_e$ is not killed by $f$ by Lemma \ref{lem:generator_not_killed}, we get that $G_{f_*(v)} \cap G_{f_*(v')}$ is non-trivial, hence $f_*(v),f_*(v')$ are at distance at most $2$ by weak acylindricity (Corollary \ref{cor:acylindricity}). Moreover, as $G_e= G_v \cap G_{v'}$ is distorted in one of the special subgroups $G_v, G_{v'}$, it follows that $f(G_v \cap G_{v'})\subset G_{f_*(v)} \cap G_{f_*(v')}$ is distorted in at least one of the special subgroups $G_{f_*(v)}$, $G_{f_*(v')}$. Thus, $f_*(v)$ and $f_*(v')$ are vertices of $X$ at distance exactly $1$ by Lemma \ref{lem:dist}, and it follows that $f_*$ induces a combinatorial map from the $1$-skeleton of $X$ to itself. Moreover, if $G_e= G_v \cap G_{v'}$ is distorted in $G_v$, then $f(G_v \cap G_{v'})\subset G_{f_*(v)} \cap G_{f_*(v')}$ is distorted in $G_{f_*(v)}$, thus $f_*$ preserves the orientation.
\end{proof}

\begin{lem}\label{lem:induced_2}
The map $f_*: X^{(1)} \ra X^{(1)}$ extends to a  combinatorial map $f_*: X \ra X$ that restricts to an isomorphism on each square of $X$.
\end{lem}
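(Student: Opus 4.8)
The plan is to extend the combinatorial map $f_*$ from the directed $1$-skeleton to all of $X$ by checking that it sends the boundary of each square of $X$ to the boundary of a square of $X$, and that this assignment is compatible on edges. By Lemma \ref{lem:induced_1}, $f_*$ already sends each directed edge of $X$ to a directed edge of $X$, preserving orientation. So let $s$ be a square of $X$, with boundary cycle consisting of four vertices $v_0, v_1, v_2, v_3$ and four edges $e_0, \dots, e_3$. I want to show that the images $f_*(v_0), \dots, f_*(v_3)$ are the four vertices of a (necessarily unique) square of $X$, and then declare $f_*$ to map $s$ isomorphically onto that square. The orientation-preserving claim on squares will then be immediate from the orientation-preserving claim on edges (Lemma \ref{lem:induced_1}), since the orientation of a square's boundary is determined by the orientations of its edges.

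The key observation is that the boundary of the square $s$ is stabilised by a non-trivial element of $H$: indeed, in each local group $BS(1,m_i) = \langle a_i, a_{i+1} \mid a_i a_{i+1} a_i^{-1} = a_{i+1}^{m_i}\rangle$, the element $a_{i+1}$ (the stable letter's conjugate, i.e.\ the \emph{distorted} generator) is fixed by passage around the square via the defining relations — concretely, the square $C_0$ of the fundamental domain is stabilised by the subgroup generated by the images of the edge groups, and the relators of the presentation are precisely the statements that the four edge groups around a square have a common element up to the relations. More usefully, I proceed homologically: pick a vertex $w$ of $s$ and an edge $e$ of $s$ at $w$ with $G_e \hookrightarrow G_w$ undistorted; by Lemma \ref{lem:weak_acylindricity} and the structure of the square of groups, there is a non-trivial element $g \in G_{e_0} \cap \dots$ — more precisely, I will use that the square $s$ lifts the fundamental square, so that its four edge groups, as subgroups of $H$, are $\langle h a_0 h^{-1}\rangle, \dots, \langle h a_3 h^{-1}\rangle$ for some $h \in H$, and the relators give non-trivial intersections between consecutive ones. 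Applying $f$: since $f$ kills no $a_i$ (Lemma \ref{lem:generator_not_killed}), the four subgroups $f(G_{e_j})$ are non-trivial, consecutive ones intersect non-trivially (their preimages do), so by Lemma \ref{lem:induced_1} the four images $f_*(v_j)$ form a closed edge-path of length $4$ in $X^{(1)}$. A closed edge-path of length $4$ in the CAT(0) square complex $X$ either bounds a square or backtracks. I must rule out backtracking.

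The \textbf{main obstacle} is precisely ruling out degeneration: showing that $f_*$ does not collapse the boundary cycle of $s$ to a path of length $< 4$ (e.g.\ identifying $v_0$ with $v_2$, or folding $e_0$ onto $e_1$). To handle this I would argue as follows. If $f_*(e_0) = f_*(e_1)$ as directed edges, then, since $e_0$ and $e_1$ share the vertex $v_1$ and point appropriately, one checks using Lemma \ref{lem:weak_acylindricity} applied to the image — the two edge groups $G_{e_0}, G_{e_1}$ have $G_{e_0} \cap G_{e_1} = \{1\}$ when they meet at the vertex in which one of them is undistorted — that $f$ would have to identify the images of two generators with trivial intersection, forcing a collapse contradicting Lemma \ref{lem:generator_not_killed}; and the non-backtracking of the length-$4$ cycle at vertices of even/odd type follows because consecutive edges of $s$ have distinct types (types $i$ and $i+1$), while a backtrack in $X$ would require the two collapsed edges to have the same type — and $f$ sends each type-$i$ edge group to a conjugate of a \emph{fixed} type depending only on $f$ and $i$, because the induced map $f_*$ on $X^{(1)}$ is type-respecting up to a global permutation (this uses the rigidity of the directed $1$-skeleton from Proposition \ref{prop:canonical}, or can be extracted directly from the fact that $f_*$ commutes with the $H$-action up to the twist by $f$). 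Granting non-degeneration, the length-$4$ cycle $f_*(v_0) f_*(v_1) f_*(v_2) f_*(v_3)$ is embedded and hence, by the CAT(0) condition on $X$ (girth of links is $4$ and $X$ is simply connected, so an embedded $4$-cycle bounds a unique square), bounds a unique square $f_*(s)$. Extending $f_*$ linearly over each square gives the desired combinatorial map $X \to X$, an isomorphism on each square; functoriality $( g \circ f)_* = g_* \circ f_*$ and $(\mathrm{id})_* = \mathrm{id}$ is then immediate from the construction, since each step (image of a special subgroup, of an edge, of a square) was canonical. This completes the proof of Lemma \ref{lem:induced_2}, and with it Proposition \ref{prop:induced_map}.
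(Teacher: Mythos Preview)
Your outline is sound: reduce to showing that the image of the boundary of the fundamental square is an \emph{embedded} $4$-cycle, which then bounds a unique square in the CAT(0) complex $X$. The paper does the same reduction. The divergence, and the gap in your argument, is in how degeneration is ruled out.

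Your argument for non-degeneration does not go through as written. First, the aside about the square being stabilised by a non-trivial element is incorrect: the stabiliser of a square of $X$ is trivial (the local group at the $2$-cell is trivial), and likewise consecutive edge groups $G_{e_j}, G_{e_{j+1}}$ of a square have \emph{trivial} intersection, so the claim that ``consecutive ones intersect non-trivially (their preimages do)'' is false. Second, and more importantly, your main argument for excluding the collapse of opposite vertices rests on the assertion that $f_*$ is ``type-respecting up to a global permutation''. This is not available at this point: Proposition~\ref{prop:canonical} only identifies $X^{(1)}$ with $\Gamma_{\mathrm{dist}}$ $H$-equivariantly, it says nothing about arbitrary endomorphisms $f$. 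The twisted equivariance $f_*(h\cdot x)=f(h)\cdot f_*(x)$ does show that $f_*$ sends all type-$i$ edges to edges of a single type $\sigma(i)$, but you have not shown that $\sigma$ is injective, and that is exactly what you need. So the argument is circular.

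The paper's proof is purely algebraic and avoids this issue entirely. If two adjacent vertices collapse, then $f$ sends $\langle a_{i-1},a_i,a_{i+1}\rangle$ into a single special subgroup $BS(1,m)$; since $a_i,a_{i+1}$ lie in the derived subgroup of $\langle a_{i-1},a_i,a_{i+1}\rangle$, their images lie in the (abelian) derived subgroup of $BS(1,m)$, and the relation $a_ia_{i+1}a_i^{-1}=a_{i+1}^{m_i}$ then forces $f(a_{i+1})=1$, contradicting Lemma~\ref{lem:generator_not_killed}. The opposite-vertex case is similar, with $H$ itself mapping into one $BS(1,m)$.

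That said, your geometric approach can be salvaged, and rather cleanly, using only the orientation-preservation from Lemma~\ref{lem:induced_1}. Around the boundary of a square the edges are oriented cyclically, so at each vertex one incident edge points in and the other points out. Since $f_*$ preserves orientation, $f_*(e_{i-1})$ points into $f_*(v_i)$ and $f_*(e_i)$ points out; they therefore cannot coincide. If instead two opposite vertices collapse, say $f_*(v_0)=f_*(v_2)$, then $f_*(e_0)$ and $f_*(e_1)$ become two edges between $f_*(v_0)$ and $f_*(v_1)$ with opposite orientations; but in the CAT(0) complex $X$ there is at most one edge between two vertices, and it carries a single orientation---contradiction. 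This is the argument you were reaching for with ``point appropriately'', and it would replace the unjustified type claim.
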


\begin{proof}
This amounts to showing that $f_*: X^{(1)} \ra X^{(1)}$ is injective on the set of vertices of the fundamental square of $X$. 
 
First assume that two adjacent vertices of that fundamental square are mapped to the same vertex. Then $f$ would induce a morphism from a subgroup of the form $\langle a_{i-1}, a_i, a_{i+1} \rangle$ to a Baumslag--Solitar group of the form $BS(1,m)$, $m \geq 2$, and in particular a morphism between their derived subgroups. The relations at play imply that $a_i, a_{i+1}$ are both in the derived subgroup of $ \langle a_{i-1}, a_i, a_{i+1} \rangle $, so in particular $f$ would induce a morphism from $\langle a_i, a_{i+1} \rangle$ to the derived subgroup of $BS(1,m)$, the latter group being abelian and torsion-free. The relation between $a_i$ and $a_{i+1}$ now implies that $a_{i+1}$ is killed by $f$, contradicting Lemma \ref{lem:generator_not_killed}.

 Let us now assume that two opposite vertices of that fundamental square are mapped under $f_*$ to the same vertex of $X$. Thus $f$ induces a morphism from $\langle a_i, i\in \bbZ / 4\bbZ \rangle \cong H$ to a Baumslag--Solitar group of the form $BS(1,m)$, $m \geq 2$. The relations between the generators imply that each $a_i$ is in the derived subgroup of $H$, hence $f$ induces a morphism between $H$ and the derived subgroup of $BS(1,m)$, which is abelian and torsion-free. As before, this implies that all the generators are killed by $f$, a contradiction.  This concludes the proof.
\end{proof}

\begin{proof}[Proof of Proposition \ref{prop:induced_map}] The result follows from Lemmas \ref{lem:induced_0}, \ref{lem:induced_1} and \ref{lem:induced_2}.
\end{proof}

\begin{lem}\label{lem:identity_square}
Let $f: H \ra H$ be a non-trivial morphism. If $f_*$ fixes pointwise the fundamental square of $X$, then $f$ is the identity.
\end{lem}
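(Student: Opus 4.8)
The plan is to exploit the rigidity of edge stabilisers. Since $f_*$ fixes the fundamental square pointwise, it fixes each of its edges $e_i$, $i \in \bbZ/4\bbZ$. Recall from the construction of $f_*$ (as in the proof of Lemma \ref{lem:induced_1}) that $f(G_e) \subseteq G_{f_*(e)}$ for every edge $e$ of $X$: indeed $G_e = G_v \cap G_{v'}$ for the endpoints $v,v'$ of $e$, and the same holds for $f_*(e)$ and its endpoints $f_*(v), f_*(v')$. Applying this to $e_i$ gives $f(\langle a_i\rangle) = f(G_{e_i}) \subseteq G_{e_i} = \langle a_i\rangle$, so that $f(a_i) = a_i^{k_i}$ for some integer $k_i$, and $k_i \neq 0$ by Lemma \ref{lem:generator_not_killed}. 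It then suffices to prove that $k_i = 1$ for every $i$.

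The main step is to apply $f$ to the defining relations and interpret the outcome inside a single vertex group. Fix $i$; all the elements below lie in the special subgroup $G_{v_{i,i+1}} \cong BS(1,m_i) = \langle a_i, a_{i+1} \mid a_i a_{i+1} a_i^{-1} = a_{i+1}^{m_i}\rangle$, and applying $f$ to the relation $a_i a_{i+1} a_i^{-1} = a_{i+1}^{m_i}$ yields
$$a_i^{k_i}\, a_{i+1}^{k_{i+1}}\, a_i^{-k_i} \;=\; a_{i+1}^{m_i k_{i+1}}$$
in $BS(1,m_i)$. I would now invoke the standard identification $BS(1,m_i) \cong \bbZ[1/m_i] \rtimes \bbZ$, under which $a_{i+1}$ corresponds to $1 \in \bbZ[1/m_i]$ and $a_i$ to a generator of the $\bbZ$ factor acting by multiplication by $m_i$; then the left-hand side corresponds to $m_i^{k_i} k_{i+1}$ and the right-hand side to $m_i k_{i+1}$, so that, dividing by $k_{i+1} \neq 0$, we get $m_i^{k_i} = m_i$, whence $k_i = 1$ because $m_i \geq 2$. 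An equivalent elementary argument stays inside $BS(1,m_i)$: conjugating $a_{i+1}^{k_{i+1}}$ by $a_i^{k_i}$ lands in $\langle a_{i+1}\rangle$ only in a controlled way, which rules out $k_i \le 0$ and, for $k_i \geq 1$, forces $m_i^{k_i} = m_i$ by comparing exponents in the infinite cyclic group $\langle a_{i+1}\rangle$.

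Running this over all four indices gives $f(a_i) = a_i$ for every $i \in \bbZ/4\bbZ$, and since the standard generators generate $H$ we conclude $f = \mathrm{id}_H$. I do not expect a genuine obstacle in this argument; the only point needing a little care is to ensure the computation covers all a priori possible (in particular negative) values of $k_i$, which the metabelian description of $BS(1,m_i)$ handles uniformly.
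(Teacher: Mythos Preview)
Your proof is correct and follows the same overall strategy as the paper: deduce $f(a_i)=a_i^{k_i}$ from the fact that $f_*$ fixes each edge $e_i$, then apply $f$ to each relation $a_ia_{i+1}a_i^{-1}=a_{i+1}^{m_i}$ and analyse the resulting identity inside $BS(1,m_i)$ to force $k_i=1$.

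The execution differs, and yours is the cleaner one. The paper treats the sign of $k_i$ by cases: it first rules out $k_i<0$ via Britton's Lemma in the HNN description $\langle a_{i+1}\rangle*_{\langle a_i\rangle}$ (splitting further into two subcases according to the $m_i$-adic valuation of $k_{i+1}$), and only then, for $k_i>0$, computes $a_i^{k_i}a_{i+1}^{k_{i+1}}a_i^{-k_i}=a_{i+1}^{m_i^{k_i}k_{i+1}}$ to conclude. You bypass all of this by working in the metabelian model $BS(1,m_i)\cong\bbZ[1/m_i]\rtimes\bbZ$: there the identity $a_i^{k_i}a_{i+1}^{k_{i+1}}a_i^{-k_i}=a_{i+1}^{m_ik_{i+1}}$ becomes $m_i^{k_i}k_{i+1}=m_ik_{i+1}$ in $\bbZ[1/m_i]$, valid for every integer $k_i$, and division by $k_{i+1}\neq 0$ gives $m_i^{k_i}=m_i$. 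This handles negative $k_i$ automatically and is shorter.
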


\begin{proof} Let $f$ be as in the statement of the lemma. There exist non-zero integers $n_i, i \in \bbZ/4\bbZ$, such that 
 
 $$f(a_0)=a_0^{n_0}, f(a_1)=a_1^{n_1}, f(a_2)=a_2^{n_2} \mbox{ and } f(a_3)=a_3^{n_3}.$$

 Assume by contradiction that $n_i<0$ for some $i$, and write $n_{i+1}=m_{i}^{k_{i}}l_{i}$, with $k_{i} \geq 0$ and $l_{i}$ a non-zero  integer not divisible by $m_i$. There are two cases to consider. 
 
 First assume that $k_{i} < -n_i$. Then $f$ sends the trivial element $a_ia_{i+1}a_i^{-1}a_{i+1}^{-m_i}$ of $H$ to
 $$a_i^{n_i+k_{i}}a_{i+1}^{l_{i}}a_i^{-n_i-k_{i}}a_{i+1}^{-m_in_{i+1}},$$
 which is non-trivial, by applying Britton's Lemma in the group $\langle a_i, a_{i+1}\rangle$ seen as the HNN extension $\langle a_{i+1}\rangle *_{\langle a_i\rangle}$, a contradiction.
 
 Now assume that $k_{i} \geq -n_i$. Then $f$ sends the trivial element $a_ia_{i+1}a_i^{-1}a_{i+1}^{-m_i}$ of $H$ to
 $$a_{i+1}^{m_i^{n_i+k_{i}}l_{i}-m_in_{i+1}} = a_{i+1}^{m_i^{n_i+k_{i}}l_{i}-m_i^{1+k_i}l_i}$$
 which is non-trivial as $n_i<0$, a contradiction. 
 
 Thus $n_i>0$ for every $i$ and $f$ sends the trivial element $a_ia_{i+1}a_i^{-1}a_{i+1}^{-m_i}$ of $H$ to $$a_i^{n_i}a_{i+1}^{n_{i+1}}a_i^{-n_i}a_{i+1}^{-m_in_{i+1}} = a_{i+1}^{m_i^{n_i}n_{i+1}- m_in_{i+1}},$$ and thus $n_{i}=1$.

 We thus have $n_i=1$ for each $i$ and $f$ is the identity map.
\end{proof}

We now give a complete description of the endomorphisms and automorphisms of a Higman-like group. Let $K$ be the finite subgroup of automorphisms of $H$ of the form $a_i \mapsto a_{i+k}$ for some $k$ in $\bbZ/4\bbZ$ such that $m_i = m_{i+k}$ for every $i \in  \bbZ / 4\bbZ$.

\begin{prop}\label{prop:endo_auto}
 A non-trivial morphism $H\ra H$ is an automorphism. Moreover, the automorphism group of $H$ is isomorphic to the semi-direct product $H \rtimes K$.
\end{prop}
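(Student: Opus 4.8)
The plan is to leverage Proposition \ref{prop:induced_map}: a non-trivial morphism $f\colon H\to H$ induces a combinatorial map $f_*\colon X\to X$ restricting to an orientation-preserving isomorphism on each square. Since $H$ acts on $X$ with a single square $C_0$ as strict fundamental domain, the image $f_*(C_0)$ is some square of $X$, hence of the form $g\cdot C_0$ for some $g\in H$. Replacing $f$ by the composition with the inner automorphism given by $g^{-1}$ (which is harmless, since conjugating and composing with automorphisms preserves the class of non-trivial morphisms and the property of being an automorphism), we may assume $f_*(C_0)=C_0$. Now $f_*$ restricts to an orientation-preserving \emph{self-isomorphism} of the square $C_0$; because $f_*$ also preserves the type of edges (this follows since $f$ sends each edge group, generated by some $a_i$, to a subgroup, and the induced map on $X^{(1)}$ constructed in Lemma \ref{lem:induced_1} respects the combinatorial structure, while the type is recorded up to the $H$-action) — more carefully, an orientation-preserving automorphism of a single square that could in principle be a rotation must actually be trivial, because a rotation by $\pi/2$ or $\pi$ would force a coincidence among the images of the generators $a_i$ of the kind already excluded in the proof of Lemma \ref{lem:induced_2} (two adjacent or two opposite vertices of $C_0$ mapping to the same vertex). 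Hence after this reduction $f_*$ fixes $C_0$ pointwise.

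Next I would invoke Lemma \ref{lem:identity_square}: since $f_*$ fixes the fundamental square pointwise, $f$ is the identity. Unwinding the reduction, the \emph{original} $f$ is therefore the conjugation by $g$ composed (possibly) with the cyclic permutation automorphism used to kill the rotation — in any case, an element of $H\rtimes K$ — and in particular an automorphism. This proves the first assertion, that every non-trivial morphism $H\to H$ is an automorphism.

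For the structural statement $\operatorname{Aut}(H)\cong H\rtimes K$: one inclusion is clear, as $\operatorname{Inn}(H)\cong H$ (the centre is trivial by the earlier lemma) and the generators of $K$ are genuine automorphisms permuting the $a_i$; together they generate a subgroup isomorphic to the semidirect product $H\rtimes K$ inside $\operatorname{Aut}(H)$, with $\operatorname{Inn}(H)$ normal. Conversely, the argument above shows that an arbitrary automorphism $\alpha$, after composing with an inner automorphism to normalise $\alpha_*(C_0)=C_0$ and then with an element of $K$ to make $\alpha_*$ fix $C_0$ pointwise, becomes the identity by Lemma \ref{lem:identity_square}; hence $\alpha\in\operatorname{Inn}(H)\cdot K=H\rtimes K$. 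That $\operatorname{Inn}(H)\cap K=\{1\}$ (so the product really is the full semidirect product and the decomposition is as claimed) follows because a non-trivial element of $K$ acts non-trivially on $X^{(0)}$ by permuting edge-types, whereas an inner automorphism realising the same permutation would have to be realised by an element of $H$ permuting the $m_i$-values, and one checks directly from the presentation that no non-trivial inner automorphism permutes the standard generators cyclically (an element conjugating $\langle a_i\rangle$ to $\langle a_{i+k}\rangle$ for all $i$ would stabilise the $H$-orbit structure in a way incompatible with the girth-$4$ links unless $k=0$).

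The main obstacle I expect is the bookkeeping in the reduction step — specifically, verifying cleanly that the only orientation-preserving combinatorial self-isomorphism of the fundamental square $C_0$ that can be realised by some $f_*$ is the identity (i.e.\ ruling out the order-$2$ and order-$4$ rotations), and handling the case where the $m_i$ are not all equal so that $K$ may be a proper subgroup of $\mathbb{Z}/4\mathbb{Z}$, which constrains which rotations are even available. This is exactly the combinatorial content that makes Lemma \ref{lem:induced_2} do its work, so the proof essentially amounts to assembling Lemmas \ref{lem:induced_0}--\ref{lem:induced_2}, Lemma \ref{lem:identity_square}, and the triviality of the centre into the semidirect-product picture.
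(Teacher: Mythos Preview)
There is a genuine gap in your reduction step. You claim that once $f_*(C_0)=C_0$, the induced orientation-preserving self-map of $C_0$ must be the identity, because a non-trivial rotation ``would force a coincidence among the images of the generators $a_i$ of the kind already excluded in the proof of Lemma \ref{lem:induced_2}''. This is not correct: Lemma \ref{lem:induced_2} shows that $f_*$ is \emph{injective} on the vertices of $C_0$, and a rotation is injective --- it permutes the four vertices without identifying any of them, so nothing in that lemma rules it out. Indeed, when all the $m_i$ coincide, the cyclic permutation $a_i\mapsto a_{i+1}$ is a genuine automorphism whose induced $f_*$ is exactly such a non-trivial rotation of $C_0$.

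The paper resolves this with a simple but essential idea that your plan is missing. After arranging $f_*(C_0)=C_0$, the restriction $f_*|_{C_0}$ is an orientation-preserving automorphism of the square, hence has order dividing $4$. One then applies Lemma \ref{lem:identity_square} not to $f$ but to the \emph{fourth iterate}: since $(f^4)_*=(f_*)^4$ fixes $C_0$ pointwise, $f^4$ is the identity morphism, and therefore $f$ is an automorphism with inverse $f^3$. Only \emph{after} knowing that $f$ is an automorphism does the paper constrain the rotation: $f$ now induces a bijection on special subgroups, hence $\langle a_i,a_{i+1}\rangle\cong\langle a_{f_*(i)},a_{f_*(i)+1}\rangle$ as abstract groups, and since $BS(1,m)\cong BS(1,m')$ with $m,m'\geq 2$ forces $m=m'$, the rotation satisfies $m_i=m_{f_*(i)}$ and thus lies in $K$. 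Your final paragraph correctly flags this as the delicate point, but the resolution requires first establishing invertibility of $f$ via the $f^4$ trick; without it, you only have a \emph{containment} $f(\langle a_i,a_{i+1}\rangle)\subset\langle a_{f_*(i)},a_{f_*(i)+1}\rangle$, which does not by itself force $m_i=m_{f_*(i)}$.
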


\begin{proof}
Let $f: H \ra H$ be such a non-trivial morphism. Then $f_*$ sends the fundamental square of $X$ isomorphically to a square of $X$ in an orientation-preserving way by Proposition \ref{prop:induced_map}.  Up to post-composing $f$ by an inner automorphism, we  can assume that $f_*$ induces an orientation-preserving automorphism of the fundamental square, hence $f_*^4$ is the identity on the fundamental square of $X$. By Lemma \ref{lem:identity_square}, it follows that $f_*^4$ is the identity, and thus $f_*$ is an automorphism. 
Let $(e_i)_{i \in  \bbZ / 4\bbZ}$ be the edges of the fundamental square of $X$, such that the standard generator $a_i$ fixes $e_i$. The restriction of $f_*$ to the fundamental square of $X$ induces an automorphism, still denoted $f_*$, of the indexing set $\bbZ / 4\bbZ$ of the edges of the square. Since $f$ is an automorphism of $H$, it induces a bijection of the vertices of $X$, and thus a bijection of the maximal solvable Baumslag--Solitar subgroups of $H$, and it follows that $\langle a_i, a_{i+1} \rangle \simeq \langle a_{f_*(i)}, a_{f_*(i+1)} \rangle$ for every $i \in \bbZ/4\bbZ$. In particular, as two groups of the form $BS(1,m)$, $BS(1, m')$, with $m,m' >1$, are isomorphic if and only if $m = m'$ (as can be seen from their abelianisation), it follows that $m_i = m_{f_*(i)}$ for every $i \in \bbZ/4\bbZ$. Thus, up to post-composing by an element of $K$, we can assume that $f_*$  is the identity on the fundamental square of $X$, and we conclude by Lemma \ref{lem:identity_square}.
 \end{proof}

\begin{cor}\label{cor:H_2_2_2_2}
 The automorphism group of the Higman group $H_{2,2,2,2}$ splits as the semi-direct product 
 $$Inn(H_{2,2,2,2}) \rtimes  \bbZ / 4\bbZ,  $$
 where $Inn(H_{2,2,2,2}) \cong H_{2,2,2,2}$ is the inner automorphism group of the Higman group and $\bbZ / 4\bbZ$ is the subgroup generated by the automorphism permuting cyclically the standard generators. In particular, the outer automorphism group of the Higman group is isomorphic to $\bbZ / 4\bbZ$.\qed
\end{cor}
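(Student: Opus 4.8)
The plan is to simply specialise Proposition \ref{prop:endo_auto} to the Higman group $H_{2,2,2,2}$, for which $m_i = 2$ for every $i \in \bbZ/4\bbZ$. The first (and essentially only) point to check is the computation of the finite subgroup $K$ defined just before Proposition \ref{prop:endo_auto}: its elements are the automorphisms of the form $a_i \mapsto a_{i+k}$ subject to the constraint $m_i = m_{i+k}$ for all $i$. Since all the $m_i$ coincide, this constraint is vacuous, so every cyclic shift $k \in \bbZ/4\bbZ$ yields a well-defined automorphism, and hence $K \cong \bbZ/4\bbZ$, generated by the automorphism $a_i \mapsto a_{i+1}$ permuting the standard generators cyclically.

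Next I would invoke Proposition \ref{prop:endo_auto} directly: it gives $\mathrm{Aut}(H_{2,2,2,2}) \cong H_{2,2,2,2} \rtimes K$. Combining this with the isomorphism $Inn(H_{2,2,2,2}) \cong H_{2,2,2,2}$ (the corollary following the triviality of the centre, which applies to every Higman-like group and in particular to $H_{2,2,2,2}$), and noting that in the semi-direct product decomposition the $H$-factor is realised precisely as the inner automorphisms, one rewrites the splitting as $Inn(H_{2,2,2,2}) \rtimes \bbZ/4\bbZ$. The last assertion is then immediate: $Out(H_{2,2,2,2}) = \mathrm{Aut}(H_{2,2,2,2})/Inn(H_{2,2,2,2}) \cong K \cong \bbZ/4\bbZ$.

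I do not expect any genuine obstacle here; the corollary is a pure specialisation of the general result already established, and the only thing worth spelling out is why the defining condition on $K$ becomes trivial when all the $m_i$ are equal. The one small point of care is to make sure the identification $H \cong Inn(H)$ is compatible with the semi-direct product structure — that is, that the copy of $H$ appearing in Proposition \ref{prop:endo_auto} really is the group of inner automorphisms — but this is exactly how that proposition is proved (post-composing by inner automorphisms to reduce to the identity on the fundamental square), so nothing further is needed.
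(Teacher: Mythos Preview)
Your proposal is correct and matches the paper's approach exactly: the corollary is marked with a \qed in the paper precisely because it is an immediate specialisation of Proposition \ref{prop:endo_auto} to the case $m_i=2$ for all $i$, where the constraint defining $K$ becomes vacuous and $K\cong\bbZ/4\bbZ$. There is nothing further to add.
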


\begin{cor}\label{cor:H_2_3_5_7}
A non-trivial endomorphism of the Higman-like group 
$$\langle a,b,c,d ~|~ aba^{-1}= b^2, bcb^{-1}=c^3, cdc^{-1}=d^5, dad^{-1}=a^7\rangle$$
is an inner automorphism.\qed
\end{cor}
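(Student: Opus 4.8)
\textbf{Proof proposal for Corollary \ref{cor:H_2_3_5_7}.}
The plan is simply to specialise Proposition \ref{prop:endo_auto} to the case at hand, where the group is the Higman-like group $H = H_{(m_i)}$ with $(m_0,m_1,m_2,m_3)=(2,3,5,7)$. First I would invoke the first assertion of Proposition \ref{prop:endo_auto}: every non-trivial morphism $H \to H$ is an automorphism, so it suffices to show that every automorphism of this particular $H$ is inner, i.e. that $\operatorname{Aut}(H)=\operatorname{Inn}(H)$.

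Next I would recall from Proposition \ref{prop:endo_auto} that $\operatorname{Aut}(H)\cong H\rtimes K$, where $K$ is the subgroup of automorphisms of the form $a_i\mapsto a_{i+k}$ for some $k\in\bbZ/4\bbZ$ such that $m_i=m_{i+k}$ for every $i\in\bbZ/4\bbZ$. The key observation — the only thing to check — is that since the integers $2,3,5,7$ are pairwise distinct, the condition $m_i=m_{i+k}$ for all $i$ forces $k=0$; hence $K$ is trivial. Therefore $\operatorname{Aut}(H)\cong H$, and under the identification $\operatorname{Inn}(H)\cong H$ this copy of $H$ is precisely the inner automorphism group, so $\operatorname{Aut}(H)=\operatorname{Inn}(H)$.

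Combining the two points: a non-trivial endomorphism of $H$ is an automorphism, and every automorphism of $H$ is inner; hence every non-trivial endomorphism of $H$ is an inner automorphism, as claimed. There is essentially no obstacle here — all the work has already been done in Proposition \ref{prop:endo_auto}, and the only new input is the trivial arithmetic fact that the four chosen exponents are pairwise distinct, which kills the finite factor $K$ responsible for the outer automorphisms of the symmetric Higman group.

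\begin{proof}
The group in the statement is the Higman-like group $H=H_{(m_i)_{i\in\bbZ/4\bbZ}}$ with $(m_0,m_1,m_2,m_3)=(2,3,5,7)$. By Proposition \ref{prop:endo_auto}, a non-trivial morphism $H\ra H$ is an automorphism, and $\operatorname{Aut}(H)\cong H\rtimes K$, where $K$ is the group of automorphisms of the form $a_i\mapsto a_{i+k}$ with $m_i=m_{i+k}$ for every $i\in\bbZ/4\bbZ$. Since $2,3,5,7$ are pairwise distinct, the only $k\in\bbZ/4\bbZ$ satisfying $m_i=m_{i+k}$ for all $i$ is $k=0$, so $K$ is trivial. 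Hence $\operatorname{Aut}(H)\cong H=\operatorname{Inn}(H)$, and therefore every non-trivial endomorphism of $H$ is an inner automorphism.
\end{proof}
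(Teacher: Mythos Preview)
Your proposal is correct and follows exactly the intended route: the paper marks this corollary with a bare \qed, meaning it is an immediate consequence of Proposition \ref{prop:endo_auto}, and you have spelled out precisely that deduction (non-trivial endomorphisms are automorphisms, and $K$ is trivial since $2,3,5,7$ are pairwise distinct).
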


\section{The geometry of the action}\label{sec:action}

In this section, we study the action of subgroups of $H$ on $X$. In particular, we prove the following: 

\begin{prop}\label{prop:many_flats}
The complex $X$ contains uncountably many flats. Moreover, the complex $X$  does not have the Isolated Flats Property, and some elements of $H$ define non-contracting hyperbolic isometries of $X$. 
\end{prop}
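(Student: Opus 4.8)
The plan is to produce isometrically embedded flat planes in $X$ by a direct tile‑by‑tile construction, exploiting the large links of vertices, and to read off all three assertions from such planes. First I recall the relevant combinatorial structure. Since the local group of the square is trivial, the squares of $X$ are exactly the $H$‑translates $gC_0$ of the fundamental square, with $gC_0=g'C_0$ iff $g=g'$; and for a given edge $e$ of $X$ the squares containing $e$ form a single orbit under $\mathrm{Stab}(e)\cong\bbZ$, hence an infinite family. Recall also (see Section~\ref{sec:links}) that the link of a vertex $v$ with $G_v=BS(1,m)=\langle a,b\mid aba^{-1}=b^m\rangle$ is the bipartite graph with vertex classes the cosets of $\langle a\rangle$ and of $\langle b\rangle$ in $BS(1,m)$, an edge $g$ joining $g\langle a\rangle$ to $g\langle b\rangle$; writing $BS(1,m)\cong\bbZ[1/m]\rtimes\bbZ$, an elementary computation shows this graph is bipartite of girth $4$, has all valences infinite, and --- the point I will use --- that \emph{any two of its vertices at distance $2$ have infinitely many common neighbours}, i.e. every path of length $2$ in it extends to an embedded $4$‑cycle in infinitely many ways. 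Two consequences: any two squares of $X$ sharing an edge span an isometrically embedded flat $1\times 2$ rectangle (at each endpoint of the common edge the two squares contribute a path of length $\pi$ in the link, which is geodesic because the link has girth $4$); and a subcomplex of $X$ locally isometric to $\bbR^2$ is globally isometrically embedded, since a local isometry sends local geodesics to local geodesics and $X$ is CAT(0).

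\smallskip
\noindent\textbf{Uncountably many flats.} I would construct a flat plane $\bbR^2=\bigcup_{(p,q)\in\bbZ^2}Q_{p,q}$ one square at a time: enumerate $\bbZ^2$ so that each new position is adjacent to an already‑filled one, and choose $Q_{p,q}$ to be \emph{any} square of $X$ sharing the prescribed edge with its placed neighbour, subject to the single requirement that whenever a vertex has all four of its surrounding squares filled, those four form an embedded $4$‑cycle --- equivalently a flat disc --- in its link. Completing a vertex is always possible and never rigid: the two already‑placed squares at it span a geodesic $2$‑path in its link, which by the property above extends to an embedded $4$‑cycle in infinitely many ways. By the consequences above the resulting subcomplex is locally isometric to $\bbR^2$, hence isometrically embedded, and since at each of countably many stages there remain infinitely many essentially independent choices one obtains $2^{\aleph_0}$ distinct flat planes (and no more, a flat being determined by the countable set of squares it meets). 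The step I expect to be the main obstacle is the \emph{coherence} of the choices: the square used to complete one vertex borders other, as‑yet‑incomplete vertices and constrains their completions, and one must check these induced constraints never obstruct later completions and always leave infinitely many options --- with the explicit description of the links this reduces to bookkeeping governed entirely by the relations $a_ia_{i+1}a_i^{-1}=a_{i+1}^{m_i}$.

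\smallskip
\noindent\textbf{Failure of the Isolated Flats Property.} I will exhibit two flat planes $F_1\ne F_2$ agreeing on a half‑plane but diverging beyond it. Running the construction, first build a flat half‑plane $P=\ell\times[0,\infty)$ (extend a flat strip upward indefinitely), then extend $P$ downward in two different ways, making distinct choices at the very first step so that at some vertex $z\in\ell$ the two resulting downward edge‑directions $w_1\ne w_2$ differ. Then the perpendiculars to $\ell$ at $z$, namely $\rho\cup\rho_1\subset F_1$ and $\rho\cup\rho_2\subset F_2$ (with $\rho$ the common ray into $P$ and $\rho_i$ the ray into $F_i\setminus P$), share the ray $\rho$ but leave $z$ in the distinct directions $w_1,w_2$, hence diverge (the distance between two geodesic rays from a point is convex and has positive initial slope when the angle is positive). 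Now suppose $X$ had the Isolated Flats Property, witnessed by an $H$‑invariant family $\mathcal F$ of flats with every flat contained in the $\epsilon$‑neighbourhood of a member, and pick $E_i\in\mathcal F$ with $F_i\subseteq N_\epsilon(E_i)$. If $E_1=E_2$, project the three geodesic rays from $z$ to the single flat $E_1\cong\bbR^2$: the perpendiculars map to quasigeodesic lines of $\bbR^2$ sharing a quasiray, hence (Morse lemma in $\bbR^2$) lie at bounded Hausdorff distance of one another, so $\rho_1$ and $\rho_2$ do too, contradicting divergence. Therefore $E_1\ne E_2$; but then $P\subseteq N_\epsilon(E_1)\cap N_\epsilon(E_2)$ has infinite diameter, contradicting the isolation condition. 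Hence $X$ has no Isolated Flats Property.

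\smallskip
\noindent\textbf{A non‑contracting hyperbolic isometry.} Finally take $g=a_1a_3$ (more generally $g=a_i^ka_{i+2}^j$ with $k,j\ne 0$). Exactly as above, but building only a single bi‑infinite row of squares, the squares $g^nC_0$ and $g^na_1C_0$ ($n\in\bbZ$) glue along opposite edges into an isometrically embedded flat strip $S$ --- local flatness holds at each vertex by the girth‑$4$ condition --- and $g$ acts on $S$ by translation by $2$, so $g$ is hyperbolic with an axis lying in $S$. Extending $S$ transversally in both directions by the construction of the second step yields flat strips of arbitrarily large width (indeed a flat plane) all containing this axis. A geodesic contained in arbitrarily wide flat strips is not contracting: once the width exceeds $2R$, a ball of radius $R$ centred in such a strip at distance slightly more than $R$ from the axis is disjoint from the axis yet projects onto a subsegment of it of length of order $R$. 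Thus $g$ is a hyperbolic isometry whose axis is not contracting, which would establish the last assertion.
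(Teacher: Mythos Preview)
Your link computation is right --- any two link-vertices at distance $2$ do have infinitely many common neighbours --- but this is the wrong extension problem. When you come to fill the fourth square at a vertex, \emph{three} squares are already in place, not two: those three squares determine all four edges of $X$ at that vertex (the diagonally opposite square fixes the two remaining edges), and in the link they form a path of length $3$, not $2$. Completing the vertex means closing that $3$-path into a $4$-cycle, i.e.\ asking whether the two endpoints of the $3$-path are adjacent. That is exactly the question the paper answers negatively in Corollary~\ref{cor:not_complete}: the link is not complete bipartite, so some $3$-paths do not close up. Concretely, if in $BS(1,m)=\langle a,b\mid aba^{-1}=b^m\rangle$ you try to fill the fourth square when the labels read $a^{-1}$ and $b$ on the two outer edges of the $3$-path, you are asking for a relation $a^{-1}b\,a\,b^{-k}=1$, which forces $m\mid 1$. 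So the issue you flag as ``coherence bookkeeping'' is not bookkeeping at all: it is the entire content of the argument. Any scheme you choose (spiral, row-by-row, diagonal) will eventually present a vertex with three neighbours filled, and at that moment you need a reason why the $3$-path closes.

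The paper handles this by identifying the obstruction and steering around it. Lemma~\ref{lem:4_cycle} pins down the shape of every $4$-cycle: the two $a_i$-labels must agree and be positive, and then the two $a_{i+1}$-labels are $n_{i+1}$ and $m_i^{\,n_i}n_{i+1}$. Lemma~\ref{lem:extension} extracts the workable sufficient condition: if the two exponents pointing \emph{into} the missing corner are both positive, the fourth square exists and the new exponents are again positive. The construction in Section~\ref{sec:flats} is then designed so that this positivity is seeded on $\partial Q$ (by choosing the initial sequences $(k_n),(k_n')$ of \emph{positive} integers) and propagates through the induction. Your tile-by-tile plan can be made to work, but only after you impose and maintain exactly this sign condition; without it the construction genuinely breaks. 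The same gap recurs in your strip-thickening step for the non-contracting isometry: widening the strip around the axis of $a_1a_3$ again requires closing $3$-paths, so you need the same positivity mechanism (and indeed the paper's flat built from the constant sequence $k_n\equiv 1$ contains that axis). Your arguments for the failure of Isolated Flats and for non-contraction are fine once you actually have the flats; the paper obtains the former more directly by noting that the uncountable family it builds all share a common half-flat.
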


By contrast, we show in Proposition \ref{prop:no_Z2} that $H$ does not contain a subgroup isomorphic to $\bbZ^2$. In particular, none of the flats of $X$ is periodic.

From the action of $H$ on $X$, we also obtain the following refinement of the Tits alternative for $H$: 

\begin{thm}[refined Tits alternative]\label{thm:Tits}
 A non-cyclic subgroup of $H$ is either contained in a special subgroup of $H$, and hence embeds in a Baumslag--Solitar group of the form $BS(1,m)$, $m \geq2$,  or contains a non-abelian free subgroup.
\end{thm}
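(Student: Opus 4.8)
The plan is to use the action of $H$ on the CAT(0) square complex $X$ together with the combinatorial Gau\ss--Bonnet theorem, and to dichotomise a non-cyclic subgroup $G \leq H$ according to whether it fixes a vertex of $X$ or not. If $G$ fixes a vertex $v$, then $G \leq G_v \cong BS(1,m)$ for some $m \geq 2$, which is the first alternative. So I may assume $G$ acts on $X$ without a global fixed point. The goal is then to produce a non-abelian free subgroup of $G$; the natural tool is the ping-pong lemma, and the natural players are two hyperbolic isometries with transverse axes (or, more precisely, whose axes do not share endpoints at infinity), which one obtains by a standard Tits-alternative-style argument once one knows $G$ contains hyperbolic elements with ``independent'' dynamics.

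Concretely, I would first argue that a non-cyclic subgroup $G$ that does not fix a point of $X$ must contain a hyperbolic element: if every element of $G$ were elliptic, then by a standard argument for groups acting on CAT(0) spaces (a finitely generated subgroup of elliptic isometries either has a fixed point or contains a hyperbolic element; and for the general case one reduces to finitely generated subgroups, using the Bruhat--Tits-type fixed point argument together with the weak acylindricity Corollary \ref{cor:acylindricity}, which bounds the size of fixed-point sets and rules out infinite ascending chains of fixed sets) $G$ would in fact fix a point, a contradiction. In fact weak acylindricity is crucial here: it shows that fixed-point sets are uniformly "thin" (contained in the star of a vertex by Corollary \ref{cor:Fix}), so an infinite group of elliptic elements cannot have a common fixed point only at infinity. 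Next, given a hyperbolic element $h \in G$ with axis $\ell$, I would analyse the possible structure of $G$: either $G$ virtually fixes the pair of endpoints $\partial \ell \subset \partial X$, or it does not. If it does not, a conjugate $ghg^{-1}$ has an axis $\ell'$ with $\partial \ell \cap \partial \ell' = \emptyset$, and a ping-pong argument on $\partial X$ (using high powers $h^N$, $(ghg^{-1})^N$) produces a free subgroup $\langle h^N, (ghg^{-1})^N\rangle \cong F_2$.

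The remaining, and I expect main, case is when $G$ virtually stabilises $\{\ell^{+}, \ell^{-}\}$, i.e. a finite-index subgroup $G_0 \leq G$ stabilises the axis $\ell$ (after passing to a further index-$2$ subgroup, it fixes each endpoint). Here I would use the geometry of $X$ heavily: the stabiliser of a geodesic line in $X$ that fixes both endpoints acts on the "parallel set" of $\ell$. Using weak acylindricity again — two vertices at distance $\geq 3$ have trivial common stabiliser — one shows that the kernel of the action $G_0 \to \mathrm{Isom}(\ell) = \bbR \rtimes \bbZ/2$ is severely constrained: any element acting trivially on $\ell$ fixes infinitely many vertices of $X$ pairwise at distance $\geq 3$, hence is trivial. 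Therefore $G_0$ embeds into $\mathrm{Isom}(\ell)$, so $G_0$ is virtually cyclic, hence $G$ is virtually cyclic; but a virtually cyclic subgroup of a torsion-free group is cyclic, contradicting the assumption that $G$ is non-cyclic. This last contradiction shows that the virtually-axis-stabilising case cannot occur for non-cyclic $G$ not fixing a vertex, so we are always in the ping-pong case and get $F_2 \leq G$.

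The hardest part will be making the elliptic-subgroup step and the kernel-of-the-axis-action step fully rigorous: one must carefully handle subgroups that are not finitely generated (passing to directed unions and exploiting that the weak acylindricity bound on fixed-point sets is uniform, so an increasing union of elliptic subgroups with fixed points still has a fixed point), and one must verify that an element of $H$ acting trivially on the axis of a hyperbolic element really does fix a long segment of vertices — this requires knowing that the axis passes through (or close to) vertices of $X$ and that translation along it by a trivial amount forces pointwise fixing, which follows from the combinatorial structure of $X$ as a square complex with the strict fundamental domain. I would also double-check that no torsion or parabolic-type phenomena intervene, using that $H$ is torsion-free (e.g. as an amalgam of torsion-free groups) and that $X$ is a finite-dimensional (in fact $2$-dimensional) CAT(0) cube complex, so Bridson's semisimplicity theorem \cite{BridsonSemiSimple} applies and every isometry is elliptic or hyperbolic, ruling out parabolics.
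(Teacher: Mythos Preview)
Your ping-pong step in Case~A is where the argument breaks. The complex $X$ is \emph{not} hyperbolic --- by Proposition~\ref{prop:many_flats} it contains uncountably many flats and even admits non-contracting hyperbolic isometries of $H$ --- and in a CAT(0) space with flats, hyperbolic isometries do not have North--South dynamics on the visual boundary. The prototype obstruction is exactly the one the paper must work to exclude: two independent translations of a flat $\bbR^2 \hookrightarrow X$ are hyperbolic with \emph{disjoint} boundary fixed-point sets, yet they generate $\bbZ^2$, not $F_2$, and no passage to high powers helps. So ``$\partial\ell \cap \partial\ell' = \emptyset \Rightarrow \langle h^N, ghg^{-1\,N}\rangle$ is free'' is simply false in this setting. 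Your Case~B has a related gap: fixing $\ell^{\pm}$ only forces $G_0$ to preserve the parallel set $P(\ell)\cong \ell\times Y$, and an element in the kernel of $G_0 \to \mathrm{Isom}(\ell)$ acts as $\mathrm{id}_\ell \times \psi$ on $P(\ell)$ --- it need not fix any point of $\ell$, let alone a sequence of vertices of $X$ at mutual distance $\geq 3$, so weak acylindricity does not kill it. Both problems stem from the same source: flats.

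The paper's proof confronts this directly and uses genuinely different ingredients. For subgroups containing an elliptic element, it proves Proposition~\ref{prop:free_group} (two elliptics with disjoint fixed-point sets already generate a group containing $F_2$) via disc diagrams and the combinatorial Gau\ss--Bonnet theorem --- the tool you invoke in your first sentence but never actually deploy --- and combines this with a Helly-type argument to get Corollary~\ref{cor:Tits_elliptic}. For purely loxodromic subgroups, the action is free, and the paper appeals to the Sageev--Wise Tits alternative for groups acting freely on CAT(0) cube complexes (virtually abelian or contains $F_2$), then excludes the virtually abelian case via Proposition~\ref{prop:no_Z2}: $H$ contains no $\bbZ^2$. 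That proposition --- no flat of $X$ is periodic, despite there being uncountably many --- is proved by a label / Britton's-Lemma argument specific to the Baumslag--Solitar vertex groups, and it is precisely the substitute for the boundary dynamics you are missing.
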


\subsection{Cycles in links of vertices}\label{sec:links}

We give here a description of the $4$-cycles in the links of  vertices of $X$.

\begin{definition}
For a pair $(C,C')$ of distinct squares of $X$ sharing an edge, we associate a standard generator $a_{C,C'}$ of $H$ and a non-zero integer $n_{C,C'}$ defined as follows: Since $H$ acts freely transitively on the squares of $X$, choose the unique element $h \in H$,  the unique standard generator $a_{C,C'}$ of $H$ and the unique non-zero integer $n_{C,C'}$ such that $C=hC_0$ and $C'=ha_{C,C'}^{n_{C,C'}}C_0$, where $C_0$ is the fundamental square of $X$.  In particular, we have $a_{C,C'}=a_{C',C}$ and $n_{C,C'}=-n_{C',C}$. 
\end{definition}

\begin{definition}[dual graph and labelling]
Let $Y$ be a CAT(0) square complex. The \textit{dual graph} of $Y$ is the graph $Y^*$ defined as follows: Vertices of $Y^*$ correspond to squares of $Y$. For two squares $C, C'$ of $Y$ sharing an edge, we add an oriented edge from $C$ to $C'$. This dual graph is a graph in the sense of Serre  \cite{SerreTrees}, by declaring that the inverse of the oriented edge from $C$ to $C'$ is the oriented edge from $C'$ to $C$.  

If $Y$ is a full subcomplex of $X$, we add a label to each oriented edge of $Y^*$ by giving an oriented edge from $C$ to $C'$ the label $a_{C,C'}^{n_{C,C'}}$. In particular, the label of the inverse of an edge of $Y^*$ is the inverse of the label of that edge. The action of $H$ on $X$ induces an action of $H$ on $X^*$ by label-preserving isomorphisms.

More generally, let $f: Y \ra X$ be a combinatorial map that restricts to an isomorphism on each square of $Y$, and such that distinct squares of $Y$ sharing an edge are sent to distinct squares of $X$. The dual graph $Y^*$ of $Y$ inherits a labelling from the labelling of $X^*$: For two adjacent squares $C, C'$ of $Y$, we set $n_{C,C'}:= n_{f(C), f(C')}$ and give the oriented edge from $C$ to $C'$ the label $a_{f(C),f(C')}^{n_{f(C),f(C')}}$. Conversely,  the labelled dual graph $Y^*$ together with the image of a single square completely characterises the map $f$.
\end{definition}

A $4$-cycle of the link of $v$ corresponds to a subcomplex $D$ of $X$ isomorphic to a $2 \times 2$ grid, centred at $v$.

\begin{lem}\label{lem:4_cycle}
The dual graph of a $2 \times 2$ grid contained in $X$ is, up to orientation-preserving and label-preserving isomorphisms,  of the following form, for some $i \in \bbZ / 4 \bbZ$ and  positive integers $n_i, n_{i+1}$: 

\begin{figure}[H]
\begin{center}
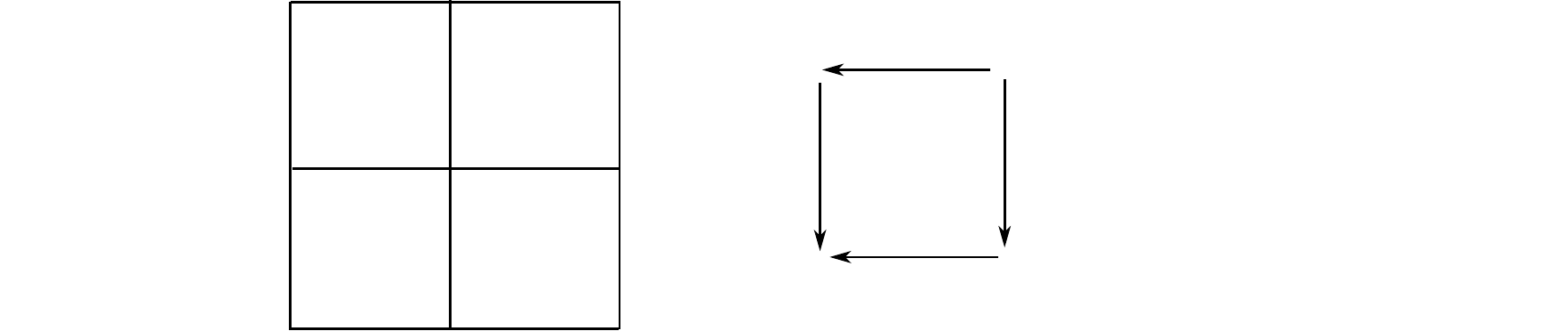
\caption{Left: A $2 \times 2$ grid of $X$. Right: The associated dual graph (for clarity, we only represent one oriented edge for each pair of inverse oriented edges).}
\label{fig:flats}
\end{center}
\end{figure}
\end{lem}

\begin{proof}
Denote by $C_1, \ldots, C_4$ the squares of the grid displayed counter-clockwise, starting from the top-right corner. Since the action of $H$ is free on the squares of $X$, it follows that the product $$a_{C_1, C_2}^{n_{C_1, C_2}}a_{C_2, C_3}^{n_{C_2, C_3}}a_{C_3, C_4}^{n_{C_3, C_4}}a_{C_4, C_1}^{n_{C_4, C_1}}$$ is the identity element. Without loss of generality, we can assume that $a_{C_1,C_2} = a_{C_3,C_4} = a_i$ and $a_{C_2,C_3} = a_{C_4,C_1} = a_{i+1}$ for some $i \in \bbZ / 4 \bbZ$. The result is now a direct consequence of Britton's Lemma for the Baumslag--Solitar group $\langle a_i, a_{i+1} \rangle$ considered as the HNN extension $\langle a_{i+1}\rangle *_{\langle a_i\rangle}$.
\end{proof}

\begin{cor}\label{cor:not_complete}
The link of of a vertex of $X$ is not a complete bipartite graph.
\end{cor}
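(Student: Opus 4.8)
\noindent\emph{Proof proposal.} The plan is to read the link of a vertex of $X$ directly off the action, recognise it as an explicit bipartite graph on cosets in a Baumslag--Solitar group, and then exhibit a non-edge.

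First I would fix a vertex $v$ of $X$. By construction $G_v\cong BS(1,m_i)=\langle a_i,a_{i+1}\mid a_ia_{i+1}a_i^{-1}=a_{i+1}^{m_i}\rangle$ for some $i\in\bbZ/4\bbZ$; let $\epsilon_i,\epsilon_{i+1}$ be the two edges of the fundamental square $C_0$ meeting at $v$, of types $i$ and $i+1$ respectively, so that $G_{\epsilon_i}=\langle a_i\rangle$ and $G_{\epsilon_{i+1}}=\langle a_{i+1}\rangle$. Unwinding the description of links recalled in Section~\ref{sec:preliminaries} (the link of $v$ is the quotient of the Bass--Serre tree of $G_{\epsilon_i}\ast G_{\epsilon_{i+1}}$ by $\ker f_v$), the two colour classes of $\mathrm{link}(v)$ are the edges of $X$ at $v$ of type $i$, resp.\ of type $i+1$, which, since $G_v$ acts transitively on edges at $v$ of a fixed type with stabilisers $\langle a_i\rangle$, resp.\ $\langle a_{i+1}\rangle$, are identified with $G_v/\langle a_i\rangle$ and $G_v/\langle a_{i+1}\rangle$; here the coset $g\langle a_i\rangle$ (resp.\ $g\langle a_{i+1}\rangle$) corresponds to the edge $g\epsilon_i$ (resp.\ $g\epsilon_{i+1}$).

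Next I would describe the edges of $\mathrm{link}(v)$. Since $H$, hence $G_v$, acts freely on the squares of $X$, every square of $X$ containing $v$ is of the form $gC_0$ for a unique $g\in G_v$, and it contains precisely the two edges $g\epsilon_i$ and $g\epsilon_{i+1}$ at $v$. Hence the vertices $g\langle a_i\rangle$ and $h\langle a_{i+1}\rangle$ of $\mathrm{link}(v)$ are joined by an edge if and only if $g\langle a_i\rangle\cap h\langle a_{i+1}\rangle\neq\emptyset$, i.e.\ if and only if $g^{-1}h\in\langle a_i\rangle\langle a_{i+1}\rangle$. (When non-empty, this intersection is a single element by Lemma~\ref{lem:weak_acylindricity}, so $\mathrm{link}(v)$ is a simple graph, in agreement with it being bipartite of girth $4$.) Consequently $\mathrm{link}(v)$ is complete bipartite if and only if $\langle a_i\rangle\langle a_{i+1}\rangle=G_v$. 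To refute this it suffices to produce one element of $G_v$ that is not of the form $a_i^{p}a_{i+1}^{q}$: using the faithful representation $BS(1,m_i)\hookrightarrow GL_2(\bbZ[1/m_i])$ with $a_i\mapsto\bigl(\begin{smallmatrix}m_i&0\\0&1\end{smallmatrix}\bigr)$ and $a_{i+1}\mapsto\bigl(\begin{smallmatrix}1&1\\0&1\end{smallmatrix}\bigr)$, one has $a_i^{p}a_{i+1}^{q}\mapsto\bigl(\begin{smallmatrix}m_i^{p}&m_i^{p}q\\0&1\end{smallmatrix}\bigr)$ whereas $a_{i+1}a_i\mapsto\bigl(\begin{smallmatrix}m_i&1\\0&1\end{smallmatrix}\bigr)$; comparing entries, $a_{i+1}a_i=a_i^{p}a_{i+1}^{q}$ forces $p=1$ and then $q=1/m_i\notin\bbZ$, impossible since $m_i\geq 2$. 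Thus $a_{i+1}a_i\notin\langle a_i\rangle\langle a_{i+1}\rangle$ and $\mathrm{link}(v)$ is not complete bipartite. (The same conclusion can be extracted from the shape of $2\times 2$ grids in Lemma~\ref{lem:4_cycle}, or from the action of $BS(1,m_i)$ on its Bass--Serre tree.)

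The only thing that requires care is the bookkeeping in the middle step -- matching cosets of $\langle a_i\rangle$ and of $\langle a_{i+1}\rangle$ with edges of $X$ at $v$, and squares of $X$ at $v$ with elements of $G_v$ -- but this is immediate from freeness of the action on squares together with the standard description of links in the universal cover of a square of groups; the remaining content is the displayed one-line computation in $BS(1,m_i)$, so I do not expect a genuine obstacle.
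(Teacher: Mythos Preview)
Your argument is correct. The identification of $\mathrm{link}(v)$ with the bipartite coset graph on $G_v/\langle a_i\rangle \sqcup G_v/\langle a_{i+1}\rangle$, with an edge between $g\langle a_i\rangle$ and $h\langle a_{i+1}\rangle$ precisely when $g^{-1}h\in\langle a_i\rangle\langle a_{i+1}\rangle$, is exactly right given that $H$ acts freely on squares and with strict fundamental domain $C_0$; and your matrix computation in $BS(1,m_i)$ cleanly shows $\langle a_i\rangle\langle a_{i+1}\rangle\neq G_v$.

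The paper takes a different, though closely related, route. Rather than describing the whole link as a coset graph, it uses the classification of $2\times 2$ grids in Lemma~\ref{lem:4_cycle} (proved via Britton's Lemma in $BS(1,m_i)$): it exhibits the specific length-$3$ path in $\mathrm{link}(v)$ coming from the three squares $C_0$, $a_iC_0$, $a_{i+1}C_0$ and observes that its dual graph cannot be completed to one of the allowed labelled $4$-cycles. Unwinding this, the paper's obstruction is the non-existence of $g\in a_i\langle a_{i+1}\rangle\cap a_{i+1}\langle a_i\rangle$, i.e.\ $a_{i+1}^{-1}a_i\notin\langle a_i\rangle\langle a_{i+1}\rangle$, which is the same non-edge you found (up to inversion). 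So the two arguments detect essentially the same obstruction; yours is more self-contained and bypasses the dual-graph machinery, while the paper's phrasing reuses the framework of Lemma~\ref{lem:4_cycle} that is needed anyway for the construction of flats in Section~\ref{sec:flats}.
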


\begin{proof}
In a complete bipartite graph, every path of length $3$ extends to a cycle of length $4$. It follows from the previous discussion that the path of length $3$ in the link of $v$ defined by the subcomplex $C_0 \cup a C_0 \cup b C_0$ does not extend to a cycle of length $4$, as the dual graph of $C_0 \cup a C_0 \cup b C_0$ cannot be extended to labelled graph of the form given in Lemma \ref{lem:4_cycle}.
\end{proof}

The previous corollary follows from the fact that certain paths of length $3$ in the link of a vertex do not extend to cycles of length $4$. We give an easy criterion that allows for such extensions, which will be used in constructing flats of $X$ by induction.

\begin{lem}\label{lem:extension}
Let $D$ be a $2 \times 2$ grid made of four squares $C_1, \ldots, C_4$ such that $C_i \cap C_{i+1}$ contains an edge for each $i$, and assume that there exists an isometric embedding $f: C_1 \cup C_2 \cup C_3 \hra X$. If $n_{C_1, C_2}$ and $n_{C_3, C_2}$ are both positive, then $f$ extends to an isometric embedding $D \hra X$ such that $n_{C_4, C_1}$ and $n_{C_4, C_3}$ are also positive.
\end{lem}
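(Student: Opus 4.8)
\textit{Proof plan.} The plan is to exhibit the missing square $C_4$ explicitly as an $H$-translate of the fundamental square $C_0$, to check that the resulting combinatorial extension $f\colon D\to X$ is a local isometry, and then to invoke the standard fact that a local isometry (injective on links, with full image) from a CAT(0) square complex into a CAT(0) square complex is an isometric embedding onto a convex subcomplex (see e.g.\ \cite{BridsonHaefliger}); here $D$, being isometric to $[0,2]^2$ with its unit subdivision, is CAT(0).

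We begin with a normalisation. Writing $a_i:=a_{C_1,C_2}$, the fact that two edges of a single square of $X$ sharing a vertex have consecutive types forces $a_{C_2,C_3}$ to be $a_{i+1}$ or $a_{i-1}$; since the statement is symmetric under the automorphism of the grid $D$ exchanging $C_1$ and $C_3$ while fixing $C_2$ and $C_4$ (it swaps the two hypotheses with each other and the two conclusions with each other, and turns the case $a_{C_2,C_3}=a_{i-1}$ into the case $a_{C_2,C_3}=a_{i+1}$), we may assume $a_{C_2,C_3}=a_{i+1}$. Put $n_i:=n_{C_1,C_2}>0$ and $n_{i+1}:=n_{C_3,C_2}=-n_{C_2,C_3}>0$, and normalise so that $f(C_2)=hC_0$; then the labelling forces $f(C_1)=ha_i^{-n_i}C_0$ and $f(C_3)=ha_{i+1}^{-n_{i+1}}C_0$. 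Now set
$$C_4\ :=\ ha_{i+1}^{-n_{i+1}}a_i^{-n_i}\,C_0\ =\ ha_i^{-n_i}a_{i+1}^{-m_i^{n_i}n_{i+1}}\,C_0,$$
the second equality being the identity $a_i^{n_i}a_{i+1}^{-n_{i+1}}a_i^{-n_i}=a_{i+1}^{-m_i^{n_i}n_{i+1}}$ in $\langle a_i,a_{i+1}\rangle$, which holds precisely because $n_i\ge 0$ --- this is the only place where the positivity of $n_{C_1,C_2}$ is used essentially. From the first expression $C_4$ meets $f(C_3)$ along the edge $ha_{i+1}^{-n_{i+1}}e_i$, and from the second it meets $f(C_1)$ along the edge $ha_i^{-n_i}e_{i+1}$; both edges contain $f(v)$, so attaching $C_4$ to $f(C_1\cup C_2\cup C_3)$ along them, in the manner dictated by the combinatorics of the grid, extends $f$ to a combinatorial map $D\to X$ restricting to an isomorphism on each square. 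Reading the dual-graph labels of $C_4$ off these two expressions gives $n_{C_4,C_3}=n_i>0$ and $n_{C_4,C_1}=m_i^{n_i}n_{i+1}>0$, as required.

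It then remains to verify that $f\colon D\to X$ is a local isometry. At the remaining corner of $C_4$ (whose link in $D$ is a single edge) and at every vertex of $C_1\cup C_2\cup C_3$ other than $v$ and the two vertices $u_1\in C_1\cap C_4$, $u_3\in C_3\cap C_4$ distinct from $v$, the link of $D$ is unchanged from $C_1\cup C_2\cup C_3$, where $f$ is a local isometry by hypothesis, so there is nothing new to check there. At the centre $v$ the link of $D$ maps to a $4$-cycle: the four edges of $D$ at $v$ and the four squares $C_1,\ldots,C_4$ have pairwise distinct images, the same-type pairs being separated by Lemma \ref{lem:weak_acylindricity} (for instance $he_i\neq ha_{i+1}^{-n_{i+1}}e_i$ because $a_{i+1}^{n_{i+1}}\notin\langle a_i\rangle$) and $f(C_4)\neq f(C_1),f(C_2),f(C_3)$ being immediate from the formulae; since links of $X$ are bipartite of girth $4$, a $4$-cycle has no chord, so the map is injective with full image there. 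Finally, at $u_1$ (resp.\ $u_3$) the link of $D$ is a path of length $2$ whose two endpoints are carried to two \emph{distinct} edges of $X$ of the \emph{same} type --- type $i+2$ at $u_1$, type $i-1$ at $u_3$ --- the distinctness again coming from Lemma \ref{lem:weak_acylindricity}; as every square of $X$ has exactly one edge of each type, two distinct edges of the same type cannot span a common square, so these endpoints are non-adjacent in the target link and the image path is induced. Thus $f$ is a local isometry, hence an isometric embedding $D\hookrightarrow X$, and we are done.

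The main obstacle is the bookkeeping in this last step: one has to keep track, for each vertex of $D$, of the vertex of $C_0$ to which it corresponds and of the types of the $X$-edges issuing from it, so as to recognise that the two ``outer'' edges at $u_1$, and likewise the two at $u_3$, share a type. Once this is arranged, the absence of chords in the relevant links of $X$ follows from the one-edge-per-type structure of squares of $X$, the malnormality statements of Lemma \ref{lem:weak_acylindricity}, and the girth-$4$ property of links.
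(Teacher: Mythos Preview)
Your proof is correct and follows essentially the same route as the paper's. Both arguments produce the missing square $C_4$ by using the Baumslag--Solitar relation $a_i^{n_i}a_{i+1}^{-n_{i+1}}a_i^{-n_i}=a_{i+1}^{-m_i^{n_i}n_{i+1}}$ (valid because $n_i>0$) to close up the $4$-cycle in the link of the centre vertex, and then read off the positivity of $n_{C_4,C_1}$ and $n_{C_4,C_3}$ directly from the resulting labels; the paper simply records this by displaying the two possible dual graphs and invoking Lemma~\ref{lem:4_cycle}, whereas you reduce to one case by the reflection symmetry of the grid.

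The one substantive addition in your write-up is the explicit verification that the extended map $D\to X$ is an isometric embedding, by checking that links map injectively onto full subgraphs at the centre and at the two new boundary vertices $u_1,u_3$. The paper omits this check here (it only invokes the Haglund--Wise local-isometry criterion, your Lemma~\ref{lem:embedded_diagram}, later when assembling entire flats), implicitly relying on the correspondence between $4$-cycles in links and embedded $2\times2$ grids already set up before Lemma~\ref{lem:4_cycle}. Your argument is a legitimate and slightly more self-contained way to close this gap. A tiny quibble: what you actually use from Lemma~\ref{lem:weak_acylindricity} is the statement $\langle a_j\rangle\cap\langle a_{j+1}\rangle=\{1\}$ inside $BS(1,m_j)$, which appears in the \emph{proof} of that lemma rather than in its statement; this is harmless, but you might cite it as such.
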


\begin{proof}
Let $i \in \bbZ / 4 \bbZ$ such that edges containing the image of the centre of the grid are of type $i$ or $i+1$. By Lemma \ref{lem:4_cycle}, the possible extensions are given below by means of their dual graphs, depending on the type of the edge $C_1 \cap C_2$:

\begin{figure}[H]
\begin{center}
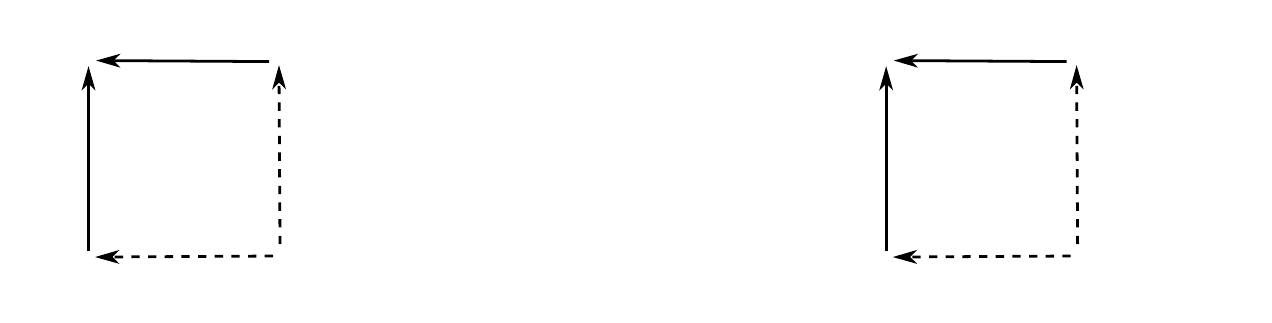
\caption{Dual graphs associated to the two possible extensions. Full arrows correspond to the restriction of $f$ to $ C_1 \cup C_2 \cup C_3$.}
\label{fig:flats}
\end{center}
\end{figure}
\end{proof}

\subsection{Flats in the square complex}\label{sec:flats}

We now construct families of isometrically embedded flats in $X$. We start by giving a procedure to construct local isometries from the Euclidean plane tiled by squares to $X$. This will be done in several steps.

We consider the Euclidean plane $\bbR^2$ tiled by squares centred at points with integral coordinates. A square with centre $(n,m), n, m \in \bbZ$ will be denoted $C_{n,m}$. 
 Let $Q$ be the upper right quadrant, that is, the union of all the squares of the form $C_{n,m}$ with $n, m \geq 0$.  Let $\partial Q \subset Q$ be the subcomplex of $Q$ which is the union of all the squares of the form $C_{0,m}$ with $m \geq 0$ and $C_{n,0}$ with $n \geq 0$.
 
  To each pair of sequence $(k_n)_{n \geq 1}, (k_n')_{n \geq 1}$ of positive integers and each $i \in \bbZ / 4 \bbZ$, we will associate a local isometry $f_{i, (k_n), (k_n')}: Q \ra X$. We first define a local isometry $f_{i,(k_n), (k_n')}: \partial Q \ra X$ as follows.\\

\noindent \textbf{Construction of a local isometry $\partial Q \ra X$.} We start by assigning to each edge of $Q$ a \textit{type} in $\bbZ /4 \bbZ$: The edge $C_{0,0} \cap C_{1,0}$ is given the type $i\in \bbZ /4 \bbZ$,  $C_{0,0} \cap C_{0,1}$ is given the type $i+1\in \bbZ /4 \bbZ$, and so on cyclically. Using the subgroup of isometries of $\bbR^2$ generated by the orthogonal reflections across the four sides of $C_{0,0}$, we can then assign a type to all the edges of $Q$. 

We now construct the map $f_{i,(k_n), (k_n')}: \partial Q \ra X$ by defining it on each square of $\partial Q$. The restriction to $f_{i,(k_n), (k_n')}$ to the square $C_{2n,0 }$ (respectively $C_{2n+1,0 }$, $C_{0,2n }$, $C_{0,2n+1 }$) is the unique type-preserving isometry such that: 
\begin{itemize}
\item $f_{i,(k_n), (k_n')}$ sends the square $C_{0,0}$ to the fundamental square $C_0$,
\item $f_{i,(k_n), (k_n')}$ sends the square  $C_{2n,0 }$ to the square  $\big(a_{i}^{-k_1}a_{i+2}^{-k_2}a_{i}^{-k_3}a_{i+2}^{-k_4} \cdots a_{i}^{-k_{2n-1}}a_{i+2}^{-k_{2n}} \big)C_0 $,
\item $f_{i,(k_n), (k_n')}$ sends the square  $C_{2n+1,0}$ to the square $\big(a_{i}^{-k_1}a_{i+2}^{-k_2}a_{i}^{-k_3}a_{i+2}^{-k_4} \cdots a_{i}^{-k_{2n+1}}\big) C_0 $,
\item $f_{i,(k_n), (k_n')}$ send the square  $C_{0,2n }$ to the square $\big(a_{i+1}^{-k_1'}a_{i+3}^{-k_2'}a_{i+1}^{-k_3'}a_{i+3}^{-k_4'} \cdots a_{i+1}^{-k_{2n-1}'}a_{i+3}^{-k_{2n}'} \big)C_0 $,
\item $f_{i,(k_n), (k_n')}$ sends the square  $C_{0,2n+1}$ to the square  $\big(a_{i+1}^{-k_1'}a_{i+3}^{-k_2'}a_{i+1}^{-k_3'}a_{i+3}^{-k_4'} \cdots a_{i+1}^{-k_{2n+1}'} \big)C_0 $.
\end{itemize}
 One easily checks that our definition of the type of edges of $Q$ allows us to glue all these maps together into a local isometry $f_{i,(k_n), (k_n')}: \partial Q \ra X$.
\\

\noindent \textbf{Extension to $Q$}. We wish to extend $f_{i,(k_n), (k_n')}$ to a local isometry $Q  \ra X$ by extending it square by square by induction. We start from the leftmost square of the horizontal hyperplane consisting of squares of the form $C_{n,1}$, $n \geq 1$ and we move to the adjacent square to the right. Once we have extended $f$ to all the squares on a given horizontal hyperplane, we move to the hyperplane above and start again from the leftmost square on which $f_{i,(k_n), (k_n')}$ is not already defined, moving to the right through adjacent squares. At each stage of the process, when trying to extend $f_{i,(k_n), (k_n')}$ to a square of the form $C_{l,m}$, $f_{i,(k_n), (k_n')}$ has already been defined on $C_{l-1,m-1}, C_{l-1,m},$ and $ C_{l, m-1}$. Moreover, as can be shown by induction by using Lemma \ref{lem:extension}, we have that $n_{C_{l-1,m}, C_{l-1,m-1}}$ and $n_{C_{l,m-1}, C_{l-1,m-1}}$ are both positive. Thus, Lemma \ref{lem:extension} implies that $f_{i,(k_n), (k_n')}$ extends to $C_{l,m}$ and the extended application is again a local isometry.

This procedure yields a local isometry $f_{i,(k_n), (k_n')}: Q \ra X$. \\

\noindent \textbf{Construction of an embedded flat.} Now consider four sequences of positive integers $(k_n^{(i)})_{n \geq 1}, i \in \bbZ / 4 \bbZ$. Each triple $(i, (k_n^{(i)})_{n \geq 1}, (k_n^{(i+1)})_{n \geq 1})$ yields a local isometry $f_{i, (k_n^{(i)}), (k_n^{(i+1)})}: Q^{(i)} \ra X $, where $Q^{(i)}$ is a copy of the quadrant $Q$, with the type of its edges being defined as before. We define a new square complex $Y$ from the disjoint union of the quadrants $Q^{(i)}, i \in \bbZ / 4 \bbZ$, by identifying the square $C_{n,0}^{(i)}$ of $Q^{(i)}$ with the square $C_{0,n}^{(i+1)}$ of $Q^{(i+1)}$ in a type-preserving way. The complex $Y$ is isomorphic to the square tiling of the Euclidean plane. Moreover, the local isometries $f_{i, (k_n^{(i)}), (k_n^{(i+1)})}: Q^{(i)} \ra X$ yield a local isometry $f: Y \ra X$. To conlude that $f$ is an isometric embedding, we need the following lemma:

\begin{lem}[{\cite[Lemma 2.11]{HaglundWiseSpecial}}]\label{lem:embedded_diagram}
 Let $\varphi:X_1 \ra X_2$ be a combinatorial map between CAT(0) square complexes, such that $\varphi$ sends the link of a vertex $v$ of $X_1$ injectively to a full subgraph of the link of $\varphi(v)$. Then  $\varphi$ is an isometric embedding. In particular, its image is a combinatorially convex subcomplex of $X$. \qed
\end{lem}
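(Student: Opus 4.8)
The plan is to treat this lemma as the combinatorial shadow of the classical fact that a local isometry between spaces whose source and target are both CAT(0) is automatically a global isometric embedding onto a convex subspace. First I would unpack the hypothesis. Since $\varphi$ is injective on the link of each vertex, it is injective on each closed star, and being combinatorial it therefore carries every square of $X_1$ isomorphically onto a square of $X_2$ (and, a fortiori, every edge onto an edge). The fullness clause then yields the key \emph{corner pullback} property: whenever two edges $e, e'$ of $X_1$ share a vertex $v$ and $\varphi(e), \varphi(e')$ span a square of $X_2$, the edges $e, e'$ already span a square of $X_1$ --- for $\varphi(e)$ and $\varphi(e')$ are then joined by an edge of $\mathrm{lk}(\varphi(v))$ lying inside the full image of $\mathrm{lk}(v)$, so that edge is itself the image of an edge of $\mathrm{lk}(v)$. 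Equivalently, on each link $\varphi$ is injective with locally convex image, i.e. $\varphi$ is a local isometry of the piecewise-Euclidean CAT(0) metrics.

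Next I would reduce the statement to the single claim that $\varphi$ maps every combinatorial geodesic of $X_1$ onto a combinatorial geodesic of $X_2$. This suffices: two distinct vertices of $X_1$ bound a combinatorial geodesic of positive length, whose image is a combinatorial geodesic of the same length, so $\varphi$ is injective on vertices and preserves combinatorial distance; since both complexes are CAT(0) square complexes, this upgrades to an isometric embedding of the full piecewise-Euclidean metrics, and the image is then a subcomplex stable under passing to combinatorial geodesics between its vertices, hence combinatorially convex (and in particular full).

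For the geodesic claim I would argue by contradiction via reduced disc diagrams and the combinatorial Gau\ss--Bonnet theorem (Theorem \ref{GaussBonnet}, in the general McCammond--Wise form). Over all combinatorial geodesics $p$ of $X_1$ whose image is not geodesic and all strictly shorter combinatorial paths $q$ joining the endpoints of $\varphi(p)$, choose a reduced disc diagram $D \to X_2$ for the loop $\varphi(p)\,\overline{q}$ of minimal area. Gau\ss--Bonnet forces $D$ to carry strictly positive boundary curvature, hence a boundary corner, i.e. a vertex lying in a single square $s$ of $D$. If that corner lies along $\varphi(p)$, corner pullback identifies the corresponding two consecutive edges of $p$ as two sides of a square of $X_1$; a diamond move of $p$ across this square produces a combinatorial geodesic of the same length, still offending (a diamond move does not affect whether the image path is geodesic), whose loop bounds the reduced diagram $D$ with $s$ deleted --- of strictly smaller area, a contradiction. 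If the corner lies along $q$, the analogous diamond move of $q$ across $s$ again lowers the area; and the few remaining cases, corners sitting at the endpoints of $\varphi(p)$, are disposed of by elementary surgery on $q$ and $D$. Thus $D$ must degenerate, which is the desired contradiction.

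The hard part is exactly this last paragraph's bookkeeping: organising the minimality choices so that every positively curved boundary corner of $D$ is absorbed by a diamond move, and --- crucially --- checking that corners along $\varphi(p)$ genuinely pull back to squares of $X_1$. This is the precise point at which fullness of links, rather than mere injectivity, is indispensable: without it $\varphi$ could manufacture a square in its image with no preimage in $X_1$, and the corner would not pull back. A shorter but less self-contained alternative is to stop after the first paragraph: $\varphi$ is then a local isometry between complete, simply connected piecewise-Euclidean CAT(0) spaces, and the local-to-global principle for nonpositively curved spaces (a local isometry from a complete length space into a complete CAT(0) space is $\pi_1$-injective and lifts to an isometric embedding of universal covers) applies; simple connectivity of $X_1$ and $X_2$ then makes $\varphi$ itself that embedding, with convex --- hence combinatorially convex and full --- image.
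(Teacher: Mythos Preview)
The paper does not prove this lemma: it is quoted from Haglund--Wise and marked with a terminal \qed, with no argument supplied. There is therefore no proof in the paper to compare your proposal against.

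That said, your proposal is sound, and your closing alternative is essentially the standard argument: the link hypothesis makes $\varphi$ a local isometry of the piecewise-Euclidean metrics, and since local geodesics in a CAT(0) space are global geodesics, a local isometry from a complete geodesic space into a CAT(0) space carries geodesics to geodesics and is therefore an isometric embedding with convex image. Your disc-diagram route is a legitimate combinatorial substitute for this metric fact, though the bookkeeping you yourself flag --- corners sitting at the endpoints of $\varphi(p)$, and making the minimality of $D$ interact cleanly with diamond moves on both $p$ and $q$ --- is genuine and would need to be written out in full before the sketch becomes a proof.
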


Since the link of vertices of $X$ do not contain cycles of length $3$ by the CAT(0) condition, it follows that $f:Y \ra X$ satisfies the hypotheses of Lemma \ref{lem:embedded_diagram}. Thus, the image of $f$ is an isometrically embedded flat of $X$.

\begin{proof}[Proof of Proposition \ref{prop:many_flats}]  For each choice of sequences $(k_n^{(i)})_{n \geq 1}, i \in \bbZ / 4 \bbZ$, the above procedure yields an isometrically embedded flat of $X$. If $(k_n^{(0)})_n, (k_n^{(1)})_n, (k_n^{(2)})_n$ are fixed sequences and $(k_n^{(3)})_n$ is allowed to vary among all possible sequences of positive integers, we obtain an uncountable family of isometrically embedded flats which all share a common half-flat. If now we set $(k_n^{(i)})$ to be the sequence constant at $1$, then the flats we obtain contains all squares of the form $(a_i^{-1}a_{i+2}^{-1})^nC_0, n \geq 0$. In particular, we see that the element $a_i^{-1}a_{i+2}^{-1}$ does not define a contracting hyperbolic isometry of $X$.
\end{proof}

\begin{rmk}
 Note however that some elements of $H$ do act on $X$ as contracting isometries, as was pointed out to us by Pierre-Emmanuel Caprace. Indeed, first notice that the action of $H$ on $X$ is essential. Moreover, $X$ does not split as the product of two trees by Corollary \ref{cor:not_complete}. Notice also that $H$ does not globally fix a point in the visual boundary of $X$, as the action is cocompact. Therefore, it follows from work of Caprace--Sageev that $H$ contains \textit{many} elements acting on $X$ as contracting isometries (see \cite[Theorem 6.3]{CapraceSageevRankRigidity} and details therein).
\end{rmk}

\subsection{Generating free subgroups from elliptic isometries}
We now study the action of subgroups of $H$ on $X$. We start by subgroups generated by elements of $H$ acting elliptically on $X$. The aim of this sub-section is to prove the following: 

\begin{prop}\label{prop:free_group}
 Let $a$ and $b$ be two elements of $H$ acting elliptically on $X$. If the  fixed point sets $\mbox{Fix}_{}(a)$ and  $\mbox{Fix}_{}(b)$ are disjoint, then the subgroup generated by $a$ and $b$ contains a non-abelian free subgroup. 
\end{prop}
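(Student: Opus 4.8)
The plan is to produce a free subgroup via a ping-pong argument, using the fixed-point sets $\mathrm{Fix}(a)$ and $\mathrm{Fix}(b)$ and the combinatorial geometry of $X$ (in particular, the weak acylindricity of Corollary~\ref{cor:acylindricity} and the description of fixed-point sets in Corollary~\ref{cor:Fix}). The first thing I would do is reduce to a clean situation: since $\mathrm{Fix}(a)$ and $\mathrm{Fix}(b)$ are disjoint nonempty convex subcomplexes of the CAT(0) square complex $X$, there is a geodesic (a combinatorial path, since fixed-point sets are subgraphs) realising the distance $d$ between them, with $d \geq 1$. Pick vertices $p \in \mathrm{Fix}(a)$ and $q \in \mathrm{Fix}(b)$ realising this distance. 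The key geometric input is that, because $a$ fixes $p$ but does not fix $q$ (the sets are disjoint), the elements $a^n q$ for $n \neq 0$ are ``far'' from $q$ in a controlled direction — more precisely, one shows that the geodesic from $q$ to $a^n q$ passes through (a neighbourhood of) $p$, so that $d(q, a^n q)$ is roughly $2d$ or larger, and similarly for $b$. This is where weak acylindricity does the real work: if $a^n q$ were close to $q$, then $a^n$ would fix two vertices at distance $\geq 3$ (after passing to an appropriate power if $\mathrm{Fix}(a)$ is not a single vertex, using the multiple-of-$\pi$ angle condition in Corollary~\ref{cor:Fix}), forcing $a^n = 1$.

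Next I would set up the ping-pong sets. Let $N$ be a large enough integer (say $N > 2d$) and consider the ``shadows'' $U = \{x \in X : d(x, \mathrm{Fix}(a)) < d(x, \mathrm{Fix}(b))\}$ and $V = \{x : d(x,\mathrm{Fix}(b)) < d(x,\mathrm{Fix}(a))\}$, or cleaner: let $U$ be the set of points whose projection to the geodesic $[p,q]$ lies strictly closer to $p$, and $V$ the analogous set for $q$. Using convexity of fixed-point sets and the nearest-point projection in CAT(0) spaces, one checks $a^n U \subset V$ for all $n \neq 0$ — because $a^n$ fixes $\mathrm{Fix}(a)$ pointwise and moves everything in $U$ to the ``far side'' — and symmetrically $b^m V \subset U$ for $m \neq 0$. (One has to be slightly careful when $\mathrm{Fix}(a)$ is not a point: replace $a$, $b$ by powers $a^{k}$, $b^{k}$ so that these powers act ``far enough'', using that a nontrivial power of $a$ still does not fix $q$ by weak acylindricity.) The disjointness of $U$ and $V$, plus a basepoint in the complement of both, then lets the Ping-Pong Lemma conclude that $\langle a^k, b^k \rangle$ is free of rank $2$, hence $\langle a, b\rangle$ contains a non-abelian free subgroup.

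The main obstacle I expect is the case where the fixed-point sets are not single vertices but the ``trees of depth one'' described in Corollary~\ref{cor:Fix}, together with the non-properness of the action (infinite vertex and edge stabilisers): one cannot just say ``$a^n q$ is far from $q$'' by a naive displacement estimate, and one must genuinely use weak acylindricity (Corollary~\ref{cor:acylindricity}) — any isometry fixing a path of three consecutive edges is trivial — to rule out degeneracies, and possibly pass to powers of $a$ and $b$ to guarantee the ping-pong inclusions are strict. A secondary technical point is ensuring the projection estimates behave well when the realising geodesic $[p,q]$ meets the fixed-point sets tangentially; here the angle condition in Corollary~\ref{cor:Fix} (angles a multiple of $\pi$) is what keeps the local picture under control. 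Once these degenerate configurations are handled, the ping-pong argument itself is routine.
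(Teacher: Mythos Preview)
Your ping-pong outline has a genuine gap at its core step. The assertion that ``the geodesic from $q$ to $a^n q$ passes through (a neighbourhood of) $p$'' is exactly what needs proof, and weak acylindricity does \emph{not} give it: the fact that $a^n q$ is close to $q$ says nothing about $a^n$ fixing two far-apart vertices, since $a^n$ does not fix $q$ at all. What you actually need is that the angle at $p$ between the first edge $e$ of the geodesic $[p,q]$ and its image $a^n e$ is at least $\pi$. But $a$ acts on the (infinite, girth-$4$, bipartite) link of $p$ through the vertex group $BS(1,m)$, and nothing prevents $e$ and $a^n e$ from sitting at angle $\tfrac{\pi}{2}$ for many (or all) $n$; remember that $X$ contains flats, so there is no hyperbolic-style angle growth to fall back on. The inclusion you wrote, $a^n U \subset V$, is also in the wrong direction (since $a$ fixes $\mathrm{Fix}(a)$ it should roughly preserve $U$), but even the intended inclusion $a^n V \subset U$ is unsupported for the same reason.

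The paper takes a substantially different route. First it passes to the \emph{stable} fixed-point sets $\mathrm{Fix}_\infty(a)$, $\mathrm{Fix}_\infty(b)$ and reduces to the case where these are disjoint; this is what guarantees that \emph{no power} of $a$ fixes the first edge $e_a$ of the connecting geodesic, a condition your setup never secures. It then abandons ping-pong with the elliptics $a,b$ entirely and instead works with the products $g=b^la^k$ and $h=a^kb^l$, showing that the $\langle g,h\rangle$-orbit of a certain tree $K$ built from translates of the geodesic is itself a tree. The argument that no loop can form is a curvature computation: two lemmas bound the total curvature along geodesic subarcs and across the branching vertex $v_b$, and the combinatorial Gau\ss--Bonnet theorem then rules out any reduced disc diagram filling a putative loop. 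The angle control you were hoping to get from projections is replaced by this disc-diagram analysis, which is where the specific combinatorics of $X$ (girth-$4$ links, the orientation of edges) actually enters.
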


Although the complex $X$ is far from being hyperbolic by Proposition \ref{prop:many_flats}, Proposition \ref{prop:free_group} shows that the action of $H$ on $X$ shares features of group actions on hyperbolic spaces. We  first reduce the proof to a more tractable case.

\begin{definition}
 The \textit{stable} fixed point set of an element $h \in H$ is defined as 
 $$\mbox{Fix}_{\infty}(h) := \bigcup_{n \geq 1} \mbox{Fix}(h^n).$$
\end{definition}

Let $a$ and $b$ be two elements of $H$ acting elliptically on $X$, such that the  fixed point sets $\mbox{Fix}_{}(a)$ and  $\mbox{Fix}_{}(b)$ are disjoint. We first show that it is enough to prove Proposition \ref{prop:free_group} when the stable fixed point sets $\mbox{Fix}_{\infty}(a)$ and  $\mbox{Fix}_{\infty}(b)$ are disjoint. Let us assume that the fixed point sets $\mbox{Fix}_{}(a)$ and  $\mbox{Fix}_{}(b)$ are disjoint but the stable fixed point sets do intersect. Without loss of generality, let us assume that that the inclusion $\mbox{Fix}_{}(a) \subset \mbox{Fix}_{\infty}(a)$ is strict. By the description of fixed point sets given in Lemma \ref{cor:Fix}, there are only two possible configurations: either $\mbox{Fix}_{\infty}(b)=\mbox{Fix}_{}(b)$ is a single vertex contained in  $\mbox{Fix}_{\infty}(a) \setminus \mbox{Fix}_{}(a) $, or the inclusion $\mbox{Fix}_{}(b) \subset \mbox{Fix}_{\infty}(b)$ is strict, and the two stable fixed point sets meet along a 
single vertex. 

In the first case, $b$ and $aba^{-1}$ define two elliptic isometries of $\langle a,b \rangle$ with disjoint stable fixed point sets. In the second case, we can for instance assume that $\mbox{Fix}_{}(a)$ does not contain the vertex $\mbox{Fix}_{\infty}(a) \cap \mbox{Fix}_{\infty}(b)$. Now observe that $ \mbox{Fix}_{\infty}(b) \cup \mbox{Fix}_{\infty}(a) \cup a\mbox{Fix}_{\infty}(b)$ is a convex subcomplex of the CAT(0) complex $X$, as the angle between two distinct adjacent edges is at least $\pi$, which implies that $b$ and $aba^{-1}$ have disjoint stable fixed point sets.\\ 

We thus assume that the stable fixed point sets $\mbox{Fix}_{\infty}(a)$ and  $\mbox{Fix}_{\infty}(b)$ are disjoint, and we choose vertices $v_a \in \mbox{Fix}_{\infty}(a)$ and $v_b \in \mbox{Fix}_{\infty}(b)$ realising the distance between $\mbox{Fix}_{\infty}(a)$ and $\mbox{Fix}_{\infty}(b)$. Let $k,l \geq 1$ be such that  $v_a \in \mbox{Fix}(a^k)$ and $v_b \in \mbox{Fix}(b^l)$.  We define  the following two elements of $\langle a,b \rangle$: 
$$g := b^la^k \mbox{ and } h:= a^{k}ga^{-k} = a^{k}b^l,$$
and we want to show that $g$ and $h$ generate a free subgroup of $H$.

Choose a geodesic segment $P$ between $v_a$ and $v_b$, and let $e_a$ and $e_b$ be the edges of $P$ containing $v_a$ and $v_b$ respectively. By construction, no power of $a$ stabilises $e_a$ and no power of $b$ stabilises $e_b$.  For a non-zero-integer $m$, let $P_m:=b^{lm}P$ and $L_m := P \cup P_m$. Note that $e_b$ and $b^{lm}e_b$  make an angle which is a non-zero multiple of $\pi$ by construction. We also set $P:= P_1$ and $L:=L_1$. Finally, let $K:= L \cup a^kL$. First remark that for every $s \in S:= \{g,h,g^{-1}, h^{-1}\}$, the sets $K$ and $sK$ intersect. To show that $g$ and $h$ generate a free group, we will show that the orbit of $K$ under $\langle g,h \rangle$ is a tree.

First remark that, since no power of $a$ stabilises $e_a$ and no power of $b$ stabilises $e_b$, the angles at the branching points of $K$ or at the branching points between $K$ and $sK$, $s \in S$ (black dots in Figure \ref{fig:translates}), are non-zero multiples of $\pi$.

\begin{figure}[H]
\begin{center}
\scalebox{0.69}{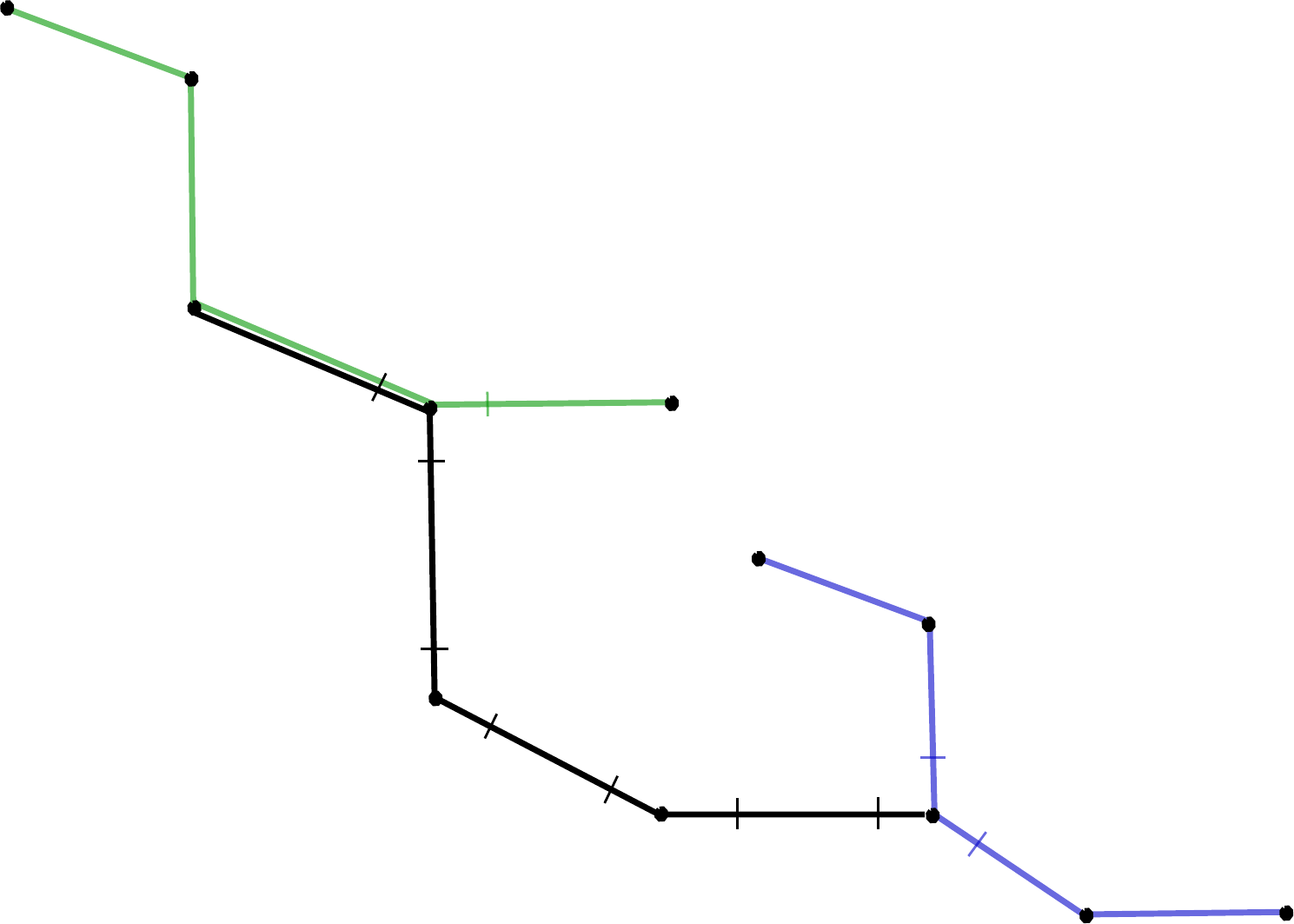}
\caption{The subcomplex $K$ and its translates $gK$ and $hK$.}
\label{fig:translates}
\end{center}
\end{figure}

We also define, for $s \in S$, the following translates of $L$:
$$L_g:=L, L_{g^{-1}}:=g^{-1}L, L_{h}:=a^k L, L_{h^{-1}}:= b^{-l}L,$$
which are used to connect $v_a$ and $sv_a$.
For a reduced word $w=s_1\cdots s_n$, $s_i \in S$, we consider the following subgraph of $X$: 
$$ K_w= K \cup s_1K \cup s_1s_2K \cup \ldots \cup s_1\cdots s_nK.$$

We want to show that for every element $w$ of $F$, the subgraph $K_w$ is a tree. By contradiction, choose a reduced word $w=s_1\cdots s_n$ in the generator system $S$ such that $K_w$ contains an embedded loop and such that each strict subword $w_i:=s_1\cdots s_i, i <n$  defines a tree.  We consider the following  subgraph

$$  L_{s_1} \cup s_1L_{s_2} \cup s_1s_2L_{s_3} \cup \cdots \cup s_1\cdots s_{n-1} L_{s_n}$$

joining $v_a \in L$ and $s_1\cdots s_{n-1}v_a$. Even though we do not know yet that this subgraph is a tree, we think of it as an immersed oriented path from $v_a$ to $w\cdot v_a$. If for some integer $1\leq i \leq n-1$, we have $s_{i} = g^{-1}$ and $s_{i+1}= h^{-1}$, we replace $(s_1 \cdots s_{i-1}) L_{s_i} \cup (s_1\cdots s_i) L_{s_{i+1}}$ by $s_1 \cdots s_{i-1} L_{-2}$ to remove backtracking (see Figure \ref{fig:backtracking}). Similarly, if for some integer $1\leq i \leq n-1$, we have $s_{i} = h$ and $s_{i+1}=g$, we replace $(s_1 \cdots s_{i-1}) L_{s_i} \cup (s_1\cdots s_i) L_{s_{i+1}}$ by $s_1 \cdots s_{i-1}a^k L_{-2}$.

\begin{figure}[H]
\begin{center}
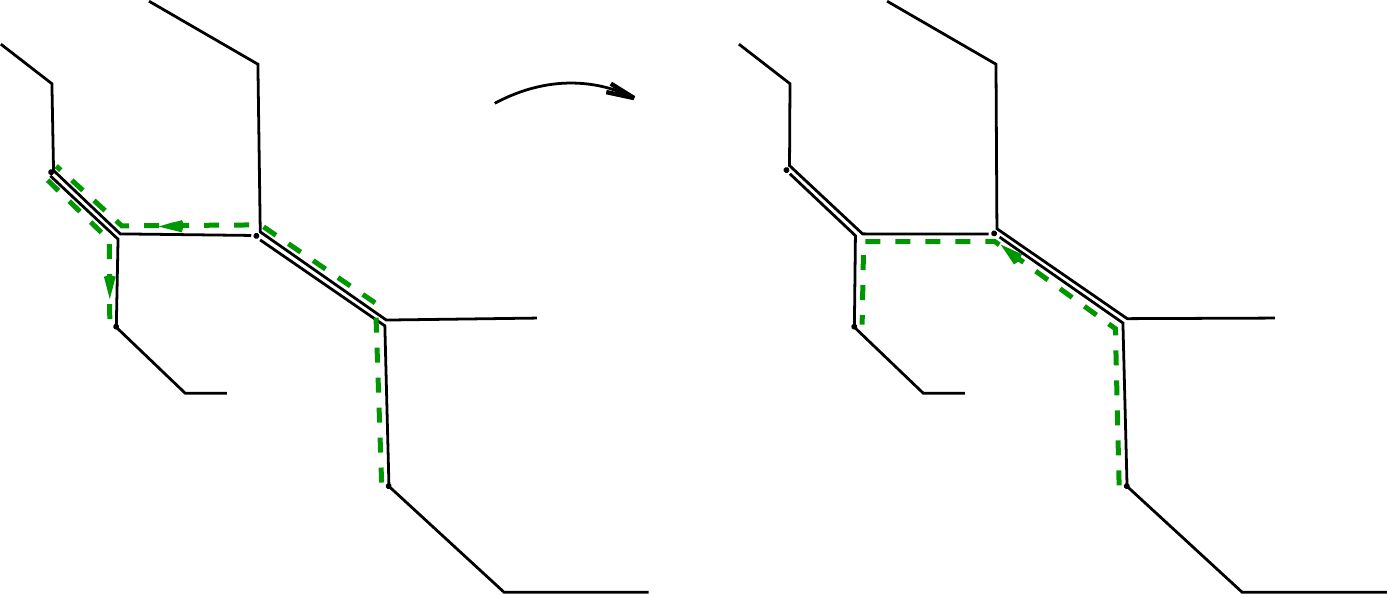
\caption{The procedure for removing backtracking for the word $w=hg^{-1}h^{-1}$.}
\label{fig:backtracking}
\end{center}
\end{figure}

First assume that $n \geq 1$. After performing this operation finitely many times, we obtain a path $L_g' = g_1L_{m_1} \cup \ldots \cup g_{n'}L_{m_{n'}}$, for some $n' \leq n$ and $m_1, \ldots, m_{n'} \in \bbZ \setminus \{0\}$, $g_1, \ldots, g_{n'} \in \langle g, h \rangle$, and such that two consecutive oriented paths $g_{i}L_{m_{i}}$ and $g_{i+1}L_{m_{i+1}}$ meet at the vertex $g_{i+1}v_a$, and the angle between the last edge of $g_{i}L_{m_{i}}$ and the first edge of $ g_{i+1}L_{m_{i+1}}$ is a non-zero multiple of $\pi$ (see Figure \ref{fig:backtracking}). Since $g_1L_1$ and $g_{n'}L_{m_{n'}}$ meet, we can find a connected subpath $Q_0$ of $K$ containing $g_1v_a$ and a connected subpath $Q_{n'}$ of $wK$ containing $g_nv_a$, such that the reunion $\gamma:= Q_0 \cup g_1 L_1 \cup \cdots \cup g_{n-1}L_{n-1}\cup Q_n$ defines an embedded loop such that the angle between two consecutive $Q_i$ or $g_iL_i$ is a non-zero multiple of $\pi$. 

If $n=0$, we can choose an embedded loop in $K$ such for all but maybe one vertex of the loop, the angle between the two edges containing it is a non-zero multiple of $\pi$.

We now want to derive a contradiction by considering a reduced disc diagram with that loop as boundary. In order to do that, we will need the following lemmas:

\begin{lem}\label{lem:geodesic_curvature}
 Let $P$ be a finite combinatorial geodesic of $X$ and $D$ a non-degenerate reduced disc diagram whose boundary path contains $P$. Then 
 $$\sum_{v \in \mathring{P}}\kappa_D(v) \leq \frac{\pi}{2}.$$
 Moreover, equality can happen only if the corners of $D$ contained in $\mathring{P}$ which are the closest to the extremities of $P$ both have positive curvature.
\end{lem}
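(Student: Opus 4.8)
The statement is a local Gauss–Bonnet estimate for a geodesic lying on the boundary of a disc diagram, so the plan is to feed the Combinatorial Gauss–Bonnet Theorem (Theorem \ref{GaussBonnet}) a carefully chosen auxiliary diagram built from $D$ and from the geodesic $P$, and to read off the claimed bound. The key geometric input is that $P$ is a geodesic of the CAT(0) square complex $X$: since links of vertices of $X$ are bipartite graphs of girth $4$ and contain no cycles of length $3$, a vertex in the interior $\mathring P$ of the geodesic must have, in $X$ and hence in $D$, an angle of at least $\pi$ on each side locally — otherwise one could shortcut $P$ through a single square or a single vertex, contradicting that $P$ is geodesic. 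Concretely, for an interior vertex $v$ of $P$ contained in $n_v$ squares of $D$, the two subpaths of $\partial D$ entering and leaving $v$ along $P$ together subtend at least $n_v\cdot \tfrac{\pi}{2}$ of ``interior angle'', and the geodesic condition forces $n_v\geq 1$; the curvature $\kappa_D(v)=2\pi-\pi\chi(\mathrm{link}(v))-n_v\tfrac{\pi}{2}$ is then non-positive for every interior vertex of $D$ lying on $P$ that is not one of the two extreme ones, and at the two extreme interior vertices of $P$ the curvature is at most $\tfrac{\pi}{2}$ each. The first move, then, is to record exactly this: $\kappa_D(v)\le 0$ for $v\in\mathring P$ not extreme, with the precise sign coming from the girth-$4$ and no-$3$-cycle conditions on links, exactly as in the treatment of internal vertices in Section \ref{sec:preliminaries}.

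**Main steps.** First I would set up the ``doubling/capping'' trick: let $D'$ be obtained from $D$ by gluing a second copy of $D$ (or just a strip of squares realising a geodesic bigon, depending on how clean one wants to be) along the subpath $P\subset\partial D$, producing a new reduced disc diagram in which the vertices of $\mathring P$ become \emph{internal} vertices of $D'$, while the two endpoints of $P$ and all of $\partial D\setminus P$ remain on $\partial D'$. One must check reducedness is preserved — this uses that $P$ is geodesic so no two squares adjacent across $P$ map to the same square of $X$ — and that the Euler-characteristic/girth conditions still force $\kappa_{D'}(v)\le 0$ at every internal vertex, in particular at every $v\in\mathring P$. Applying Theorem \ref{GaussBonnet} to $D'$ gives $\sum_{v}\kappa_{D'}(v)=2\pi$, and since all internal vertices contribute non-positively, the total positive curvature, bounded by $2\pi$, is concentrated on $\partial D'=(\partial D\setminus P)\cup\{\text{two endpoints of }P\}$. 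Relating $\kappa_{D'}$ at an endpoint of $P$ back to $\kappa_D$ at that point, and doing the bookkeeping, isolates $\sum_{v\in\mathring P}\kappa_D(v)$ and yields the bound $\le\tfrac{\pi}{2}$. An alternative, perhaps cleaner, route avoiding doubling: apply Gauss–Bonnet directly to $D$, note $\sum_{v}\kappa_D(v)=2\pi$, split the vertex set into $\mathring P$, the two endpoints of $P$, and the rest of $\partial D$ together with all internal vertices; the latter group contributes at most $2\pi$ minus the (non-negative) corner curvature forced at the two endpoints of the ambient boundary, and a short argument using that $P$ is geodesic (so the ``far'' part of the boundary cannot be shorter than $P$) bounds the contribution of $\mathring P$. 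I expect the doubling argument to be the more robust one to write down.

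**Equality case.** For the second assertion, I would trace through when the inequality $\sum_{v\in\mathring P}\kappa_D(v)\le\tfrac{\pi}{2}$ is saturated. Equality forces every internal vertex of $D'$ (equivalently, every non-extreme vertex of $\mathring P$ in $D$, and every genuine internal vertex of $D$) to have curvature exactly $0$, and forces all the remaining positive curvature $2\pi$ of $D'$ to sit at exactly the boundary corners; tracking which corners can carry curvature $\tfrac{\pi}{2}$ versus which must carry the leftover, one finds that the two corners of $D$ on $\mathring P$ nearest the extremities of $P$ cannot have curvature $\le 0$ — if one of them did, the total over $\partial D'$ would fall short of $2\pi$. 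So equality implies those two corners have positive curvature, which is the claim. The book-keeping here is the delicate part.

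**Main obstacle.** The genuinely delicate point is not Gauss–Bonnet itself but the \emph{local} geometric claim that geodesicity of $P$ forces $\kappa_D(v)\le 0$ at interior non-extreme vertices with the stated strictness, and the careful accounting of corner curvatures at the junction vertices (the endpoints of $P$ on $\partial D$) — in particular making sure that the doubling does not create a non-reduced diagram and that the curvature contributions are attributed to the correct copy. I expect that step, rather than the invocation of Theorem \ref{GaussBonnet}, to be where the real work lies, and where the ``no $3$-cycles'' and ``girth $4$'' properties of links of vertices of $X$ (from Section \ref{sec:preliminaries}) are used in an essential way.
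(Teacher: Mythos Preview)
Your approach has a genuine gap at its core. The local claim you rely on --- that every ``non-extreme'' vertex of $\mathring P$ has non-positive curvature in $D$ --- is false. A combinatorial geodesic in a CAT(0) square complex may perfectly well turn a corner of a single square: if $u,v,w$ are three consecutive vertices of $P$ with $u,w$ both adjacent to $v$ in a common square, then $d(u,w)=2$ and the subpath $u,v,w$ is geodesic, yet $v$ lies in a single square of $D$ and has $\kappa_D(v)=\tfrac{\pi}{2}$. There is no reason such a $v$ must be one of the two extreme interior vertices. So the inequality you want cannot be read off vertex by vertex.

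The doubling trick does not rescue this. Gluing a second copy of $D$ along $P$ produces, at every edge $e$ of $P$, two squares sharing $e$ that map to the \emph{same} square of $X$; the doubled diagram is therefore never reduced. Worse, if it \emph{were} reduced, then every $v\in\mathring P$ would become internal with $\kappa_{D'}(v)=2\kappa_D(v)\le 0$, forcing $\sum_{v\in\mathring P}\kappa_D(v)\le 0$, which is strictly stronger than the lemma and false in general. Replacing the second copy by some other filling $E$ gives $\kappa_{D'}(v)=\kappa_D(v)+\kappa_E(v)$, and bounding $\sum\kappa_D$ then requires the very estimate you are trying to prove for $E$. Your ``alternative route'' via Gau\ss--Bonnet on $D$ alone fails for the reason you suspect: there is no control whatsoever on the curvature along $\partial D\setminus P$.

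The paper's argument is entirely different and much more elementary: one shows directly that $\mathring P$ cannot contain two \emph{consecutive} corners of positive curvature (with only zero-curvature vertices between them). Indeed, such a pair would force a strip of squares in $D$, hence in $X$, creating a shortcut between the vertices just outside those two corners and contradicting that $P$ is geodesic. Once no two positive corners are consecutive, the sequence of nonzero curvatures along $\mathring P$ has a negative corner (contributing $\le -\tfrac{\pi}{2}$) between any two positive ones (each contributing $+\tfrac{\pi}{2}$), so the total is at most $\tfrac{\pi}{2}$; equality forces the sequence of corners to begin and end with a positive one, which is exactly the stated equality condition. No global Gau\ss--Bonnet is needed.
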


\begin{proof}
 It is sufficient to show that there do not exist two consecutive corners in $\mathring{P}$ with positive curvature such that vertices in between have zero curvature. Suppose by contradiction that this was the case, and let $v_0, \ldots, v_{n}$ be an injective sequence of adjacent vertices of $P$ such that $v_1, v_{n-1}$ are corners of positive curvature, and $\kappa_D(v_i) = 0$ for $1 < i < n-1$. Then $v_1$ is contained in a unique square of $D$, which also contains $v_0$, $v_{n-1}$ is contained in a unique square of $D$, which also contains $v_n$, and each $v_i$, $1<i<n-1$, is contained in exactly two squares. Thus, $D$ (and thus $X$) contains the following subcomplex (shaded in Figure \ref{fig:geodesic_curvature}):

\begin{figure}[H]
\begin{center}
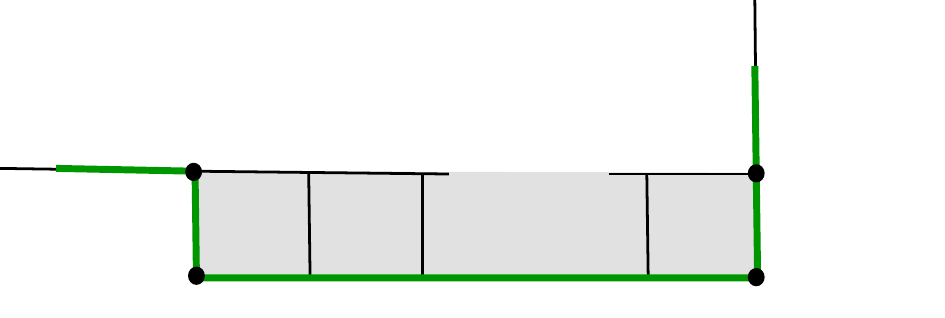
\caption{A configuration contradicting geodesicity.}
\label{fig:geodesic_curvature}
\end{center}
\end{figure}
 
 and we thus get $d(v_0,v_n) < d(v_0) + d(v_1, v_{n-1}) + d(v_{n-1}, v_n)$, which contradicts the fact that $P$ is a combinatorial geodesic.
\end{proof}

\begin{lem}\label{lem:opposite_curvature}
 Let $m$ be a non-zero integer. Let $Q, Q'$ be sub-geodesics of $P, P_m$ respectively containing $v_b$,  such that the concatenation $R:=Q \cup Q'$ is embedded in $X$. Assume that $R$ is contained in an embedded loop of $X$, and let $D$ be a reduced disc diagram with that loop as boundary. Assume that $Q$ and $Q'$ both contain a corner of $D$, and let $v$, $v'$ the corners of $Q$, $Q'$ respectively which are closest to $v_b$. Then $$\kappa_D(v) + \kappa_D(v_b) + \kappa_D(v') \leq 0.$$
\end{lem}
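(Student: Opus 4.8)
The plan is to analyze the local geometry at the three vertices $v$, $v_b$, $v'$ using the combinatorial Gau\ss--Bonnet framework and the curvature bounds already established. Recall that $v_b \in \mathrm{Fix}_\infty(b)$ lies on the common endpoint of the geodesics $P$ and $P_m = b^{lm}P$, which meet at an angle that is a non-zero multiple of $\pi$ (by construction, as noted just before Lemma \ref{lem:geodesic_curvature}). So at $v_b$ the two geodesic segments $Q \subset P$ and $Q' \subset P_m$ arrive making an angle $\geq \pi$ on the side of the disc diagram $D$, which forces $n_{v_b} \geq 2$ squares of $D$ at $v_b$ (here $n_{v_b}$ counts squares of $D$ containing $v_b$). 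Hence $\kappa_D(v_b) = 2\pi - \pi\chi(\mathrm{link}(v_b)) - n_{v_b}\tfrac{\pi}{2} \leq 0$ — in fact $v_b$ is a boundary vertex contained in at least two squares, so by the remarks before Theorem \ref{GaussBonnet} its curvature is non-positive.

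Next I would handle $v$ and $v'$. Apply Lemma \ref{lem:geodesic_curvature} to the geodesic $Q$ (as a sub-path of the boundary of $D$): the sum of curvatures over the interior vertices of $Q$ is at most $\tfrac{\pi}{2}$, and the corner of $D$ in $\mathring{Q}$ closest to $v_b$ — which is $v$ — has positive curvature only if the corner closest to the \emph{other} end of $Q$ also has positive curvature. The key observation to exploit is that $v$ is a corner contained in a single square of $D$ (curvature exactly $\tfrac{\pi}{2}$) precisely when the two edges of $Q$ at $v$, together with the edge of $\partial D$ on the other side, bound that single square; but then the edge of $Q$ on the $v_b$-side of $v$ and the edge of $P_m$ (resp.\ the boundary continuation) would be too close, contradicting either geodesicity of $Q$ or the fact that the angle between $Q$ and $Q'$ at $v_b$ is $\geq \pi$. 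More carefully, I would argue: if $\kappa_D(v) > 0$ then $v$ is in a single square $s_v$ of $D$; similarly if $\kappa_D(v') > 0$ then $v'$ is in a single square $s_{v'}$; and if moreover $\kappa_D(v_b) = 0$ then $v_b$ is in exactly two squares of $D$ with total angle $\pi$ on the $D$-side. Tracking these squares between $v$, $v_b$, $v'$ forces a thin strip of squares along $R$ in $D$, and using the labelling description of $4$-cycles (Lemma \ref{lem:4_cycle}) and the fact that no power of $b$ stabilises $e_b$, this strip cannot close up consistently — yielding a contradiction analogous to the one in the proof of Lemma \ref{lem:geodesic_curvature}.

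The cleanest way to organise the casework: first dispose of the case where at least one of $\kappa_D(v)$, $\kappa_D(v_b)$, $\kappa_D(v')$ is non-positive and the others sum to $\leq 0$ — if two of the three are non-positive we are essentially done since each of $\kappa_D(v)$, $\kappa_D(v')$ is at most $\tfrac{\pi}{2}$ and we need the total $\leq 0$, so actually the substantive case is when all three would be positive. Positivity of all three would mean $v$, $v_b$, $v'$ are each corners in a single square of $D$, with $v_b$ then also forced (by the angle $\geq \pi$ constraint) to be in at least two — a direct contradiction unless the angle at $v_b$ is exactly $\pi$ with two squares, in which case $\kappa_D(v_b) = 0$, not positive. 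So in fact $\kappa_D(v_b) \leq 0$ always, and one reduces to showing $\kappa_D(v) + \kappa_D(v') \leq -\kappa_D(v_b)$, i.e.\ that $v$ and $v'$ cannot both be positive-curvature corners-in-a-single-square while $v_b$ is flat. This is where Lemma \ref{lem:4_cycle} and the non-stabilisation of $e_b$ by powers of $b$ enter: the strip of squares forced between $v$ and $v'$ along the geodesic bigon $R$ would, reading off dual-graph labels, impose a relation forcing some power of $b$ to fix $e_b$.

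The main obstacle I anticipate is the bookkeeping in this last step: making precise how the single squares at $v$ and $v'$, combined with the two squares at $v_b$, propagate into a full strip of squares in $D$ spanning from $v$ to $v'$ along $R$, and then extracting the contradiction from the labelling. The geometric picture is forced by the angle conditions (every interior vertex of $Q$ and of $Q'$ strictly between the relevant corners has zero curvature, hence lies in exactly two squares, exactly as in the figure for Lemma \ref{lem:geodesic_curvature}), so the strip genuinely exists; the delicate part is verifying that its closure near $v_b$ is incompatible with $b^{lm}e_b$ making a non-trivial angle with $e_b$ — equivalently, that the two sides of the strip, which are translates of sub-geodesics of $P$ by a power of $b$, would have to coincide edge-by-edge near $v_b$, forcing that power of $b$ to stabilise $e_b$. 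Once that incompatibility is spelled out, the inequality $\kappa_D(v) + \kappa_D(v_b) + \kappa_D(v') \leq 0$ follows.
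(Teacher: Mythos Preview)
Your reduction is right and matches the paper: since the angle at $v_b$ between $e_b$ and $b^{lm}e_b$ is a non-zero multiple of $\pi$, one has $\kappa_D(v_b)\le 0$, and the only case requiring work is when that angle is exactly $\pi$, $\kappa_D(v_b)=0$, and both $\kappa_D(v)=\kappa_D(v')=\tfrac{\pi}{2}$. You also correctly see that this forces a thin strip of squares in $D$ along $R$ (every boundary vertex strictly between $v$ and $v'$ lies in exactly two squares of $D$).

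Where your sketch diverges from the paper, and where it is not yet a proof, is the mechanism for the contradiction. You propose to read off dual-graph labels via Lemma~\ref{lem:4_cycle} and deduce that some power of $b$ stabilises $e_b$. But Lemma~\ref{lem:4_cycle} only describes relations in the vertex group $G_{v_b}$ coming from $2\times 2$ grids; it gives no direct handle on the global isometry $b^{lm}$, and the ``other side'' of your strip is an interior path of $D$ with no a~priori reason to be a $b$-translate of anything. The paper does not use Lemma~\ref{lem:4_cycle} here at all.

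The paper's key move, which your sketch is missing, is to \emph{translate the entire disc diagram} by $b^{lm}$ and compare $D$ with $b^{lm}D$ along $Q'$. Since $b^{lm}$ fixes $v_b$ and sends $P$ to $P_m$, the segment $b^{lm}Q$ lies along $Q'$; zero-curvature boundary vertices of $D$ along $Q$ map to zero-curvature boundary vertices of $b^{lm}D$ along $Q'$. Comparing these with the zero-curvature vertices of $D$ along $Q'$, and using that links have no $3$-cycles, forces $v'=b^{lm}v$. Assuming $\kappa_D(v')=\tfrac{\pi}{2}$ and that links have no $2$-cycles, one then shows inductively that $D$ and $b^{lm}D$ share the same square along each edge of $Q'$ between $v_b$ and $v'$; in particular $b^{lm}$ stabilises the \emph{interior} edge of $D$ at $v_b$ (not $e_b$). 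The contradiction is then not ``a power of $b$ fixes $e_b$'' but rather: the other endpoint of that interior edge lies in $\mbox{Fix}_\infty(b)$ and is strictly closer to $v_a$ than $v_b$ is, contradicting the choice of $v_a,v_b$ as realising the distance between $\mbox{Fix}_\infty(a)$ and $\mbox{Fix}_\infty(b)$. That minimality hypothesis is what ultimately does the work, and it does not appear in your outline.
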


\begin{proof}
Recall that the angle between $e_b$ and $b^{lm}e_b$ is a non-zero multiple of $\pi$. If it is at least $2\pi$, then the desired inequality follows immediately. Let us then assume that the angle is $\pi$, and denote by $v$, $v'$ the corners of $Q$, $Q'$ respectively which are closest to $v_b$. If both corners $v, v'$ have negative curvature, there is nothing to prove, so let us assume that one of the corners has curvature $\frac{\pi}{2}$ in $D$. Without loss of generality, we can assume that $\kappa_D(v)= \frac{\pi}{2}$. We now show $v' = b^{lm}v$ and that $\kappa_D(v') =- \frac{\pi}{2}$.

For that consider the disc diagram $b^{lm}D$. 
Every vertex $u$ between $v$ and $v_b$ has curvature zero in $D$ by definition of $v$, so the same holds for the curvature of $b^{lm}u$ in $b^{lm}D$. Moreover, every vertex between $v_b$ and $v'$ has curvature zero in $D$. This implies that $b^{lm}v=v'$, for otherwise we could construct a embedded cycle of length $3$ in the link of a vertex. 

Let us now assume by contradiction that $\kappa_D(v') = \frac{\pi}{2}$. Since the link of $v'$ does not contain a cycle of length $2$, the squares of $D$ and $b^{lm}D$ containing $v_b$ are the same. This implies that for every edge $e$ of $Q'$ between $v_b$ and $v'$, the squares of $D$ and $b^{lm}D$ containing $e$ are the same. Indeed, this follows by induction as every vertex of $Q'$ between $v_b$ and $v'$ is contained in exactly two squares, and two squares of the CAT(0) square complex $X$ sharing two sides are the same. Thus, $b^{lm}$ stabilises the unique edge $e$ of $D$ containing $v_b$ which is not in the boundary of $D$. But the other vertex of $e$ is thus in $\mbox{Fix}(b^{lm})\subset\mbox{Fix}_{\infty}(b)$ and is strictly closer to $v_a$ than $v_b$, contradicting the definition of $v_a$ and $v_b$.
\end{proof}

Let us now consider a disc diagram $D$ with $\gamma$ as boundary. 

If $n=0$, then the curvature at each but maybe one boundary vertex is non-positive, so we get 

$$\sum_{v \in D} \kappa_D(v) \leq \frac{\pi}{2},$$

contradicting the Gau\ss--Bonnet theorem. Let us now assume that $n \geq 1$. For each $i$, $L_i$ is of the form $P \cup b^{il}P$ with a non-zero multiple of $\pi$ at the branching point. As the link of a vertex of $X$ does not contain an embedded cycle of length $3$, $b^{il}$ restricts to a bijection between the corners of $P$ and those of $ b^{il}P$. It thus follows that $\sum_{v \in \mathring{L}_i} \kappa_D(v)$ is a multiple of $\pi$. Moreover, it follows from Lemma \ref{lem:geodesic_curvature} that $\sum_{v \in \mathring{P}} \kappa_D(v)$, $\sum_{v \in f(b)^{il}\mathring{P}} \kappa_D(v) \leq \frac{\pi}{2}$, and this can only happen if the closest corners to the endpoints of the geodesic have positive curvature. As this cannot happen simultaneously by Lemma \ref{lem:opposite_curvature}, it follows that $\sum_{v \in \mathring{L}_i} \kappa_D(v)< \pi$, and thus $\sum_{v \in \mathring{L}_i} \kappa_D(v) \leq 0$. 

Reasoning analogously on $Q_0$ and $Q_n$, it follows that both $\sum_{v \in \mathring{Q}_0} \kappa_D(v)$ and $\sum_{v \in \mathring{Q}_n} \kappa_D(v)$ are at most $\frac{\pi}{2}$. As all the branching points of $\gamma$ have non-positive curvature and the unique intersection point of $Q_0$ and $Q_n$ brings at most curvature $\frac{\pi}{2}$, it follows that 
$$\sum_{v \in D} \kappa_D(v) \leq \frac{3\pi}{2},$$ contradicting the Gau\ss--Bonnet theorem. Thus, $\langle g,h \rangle$ is a free subgroup of $H$, which concludes the proof of Proposition \ref{prop:free_group}.

\begin{cor}\label{cor:Tits_elliptic}
 Let $G$ be a subgroup of $H$ which contains a non-trivial element acting elliptically on $X$. Then either $G$ fixes a vertex of $X$ or $G$ contains a free subgroup.
\end{cor}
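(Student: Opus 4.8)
The plan is to reduce the statement to Proposition \ref{prop:free_group} by studying the $G$-translates of the fixed point set of a single elliptic element. Fix a non-trivial element $a$ of $G$ acting elliptically on $X$, so that $\mbox{Fix}(a)$ is non-empty; by Corollary \ref{cor:Fix} it is contained in the star of a single vertex, hence has diameter at most $2$ in $X$. The argument then splits according to whether or not some conjugate of $a$ has fixed point set disjoint from $\mbox{Fix}(a)$.

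If there exists $g \in G$ with $\mbox{Fix}(a) \cap g\,\mbox{Fix}(a) = \emptyset$, then, since $g\,\mbox{Fix}(a) = \mbox{Fix}(gag^{-1})$, the two non-trivial elliptic elements $a$ and $gag^{-1}$ of $G$ have disjoint fixed point sets, so Proposition \ref{prop:free_group} shows that $\langle a, gag^{-1}\rangle \leq G$ — hence $G$ itself — contains a non-abelian free subgroup.

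Otherwise $\mbox{Fix}(a) \cap g\,\mbox{Fix}(a) \neq \emptyset$ for every $g \in G$. First I would upgrade this to the statement that any two $G$-translates of $\mbox{Fix}(a)$ intersect, which follows by applying the hypothesis to $g_1^{-1}g_2$ and then translating by $g_1$. Since each translate has diameter at most $2$ and any two of them meet, the $G$-invariant union $U := \bigcup_{g \in G} g\,\mbox{Fix}(a)$ has diameter at most $4$. As $X$ is a complete CAT(0) space, the Bruhat--Tits fixed point theorem \cite[Corollary II.2.8]{BridsonHaefliger} implies that $G$ fixes the circumcentre of $\overline{U}$, a point $p$ of $X$. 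Letting $\sigma$ be the unique cell of $X$ whose interior contains $p$, the group $G$ preserves $\sigma$: if $\sigma$ is a vertex we are done; if $\sigma$ is an edge then $G$ fixes it pointwise since $H$ acts without inversions; and $\sigma$ cannot be a square, for $H$ acts freely on the squares of $X$ and this would force $G = \{1\}$, which fixes every vertex. In every case $G$ fixes a vertex of $X$.

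The main obstacle is the second case: one has to exclude the possibility that $G$ has no global fixed vertex while admitting no pair of elliptic elements with disjoint fixed point sets. What makes this go through is the uniform diameter bound on fixed point sets provided by Corollary \ref{cor:Fix}, which converts ``pairwise intersecting translates'' into ``bounded orbit'' and hence, via Bruhat--Tits, into a fixed point of $X$; and then the freeness of the action on squares together with the absence of inversions, which promotes that fixed point to a fixed vertex.
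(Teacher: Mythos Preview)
Your argument is correct and is a genuinely different route from the paper's. The paper first splits according to whether $G$ contains a hyperbolic isometry: if it does, a high power of that isometry pushes $\mbox{Fix}(a)$ off itself and Proposition~\ref{prop:free_group} applies; if $G$ is purely elliptic, the paper considers the fixed point sets of \emph{all} elements of $G$, and when these pairwise intersect it invokes a Helly-type theorem for combinatorially convex subcomplexes of CAT(0) cube complexes (Gerasimov) to produce a common fixed vertex directly.

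Your approach is more economical in two respects. First, you never need the hyperbolic/elliptic dichotomy: you only track the $G$-translates of the single set $\mbox{Fix}(a)$, and the uniform diameter bound from Corollary~\ref{cor:Fix} converts ``pairwise intersecting translates'' into ``bounded $G$-invariant set'' in one stroke. Second, you replace the combinatorial Helly theorem by the Bruhat--Tits fixed point theorem, which is more standard. The price you pay is the short extra step promoting a fixed \emph{point} to a fixed \emph{vertex}, using that $H$ acts without inversion and freely on squares; the paper's Helly argument lands on a vertex immediately since the fixed point sets are subcomplexes. Both routes are short, but yours relies on strictly lighter machinery.
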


\begin{proof}
  Let $a$ be a non-trivial elliptic element of $G$. If $G$ contains a hyperbolic isometry $b$, then for a sufficiently high power $n \geq 1$, $a$ and $b^nab^{-n}$ are elliptic isometries in $G$ with disjoint fixed point sets. Thus $\langle a,b \rangle$, and hence $G$, contains a non-abelian free subgroup by Proposition \ref{prop:free_group}. If $G$ is purely elliptic, then either $G$ contains a pair of elements with disjoint fixed point sets, in which case $G$ contains a non-abelian free subgroup by Proposition \ref{prop:free_group}, or  the (combinatorially convex) fixed point sets of distinct elements of $G$ pairwise intersect, in which case a theorem \textit{\`a la  Helly} (see for instance \cite{GerasimovSemiSplittings}) implies that they globally intersect, and thus $G$ fixes a vertex of $X$.
\end{proof}

\subsection{The Higman group does not contain $\bbZ^2$}

In order to finish the proof of Theorem \ref{thm:Tits}, we have to understand subgroups of $H$ that are purely loxodromic, that is, such that every non-trivial element of that subgroup acts hyperbolically on $X$. We will need the following result, which is of independent interest: 

\begin{prop}\label{prop:no_Z2}
 A Higman-like group does not contain a subgroup isomorphic to $\bbZ^2$. In particular, no flat of $X$ is periodic.
\end{prop}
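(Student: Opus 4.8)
The plan is to argue by contradiction: suppose $A \cong \bbZ^2$ is a subgroup of $H$. The first step is to show that $A$ must act freely on $X$. Indeed, if some non-trivial element $a \in A$ acted elliptically, then $A$ would fix a vertex of $X$ by Corollary \ref{cor:Tits_elliptic} (since $A$ is abelian it contains no non-abelian free subgroup), hence $A$ would embed in some $BS(1,m)$; but $BS(1,m)$ contains no $\bbZ^2$, a contradiction. So every non-trivial element of $A$ is hyperbolic, and by the Flat Torus Theorem (applied to the CAT(0) space $X$) the subgroup $A$ stabilises a flat $F \subset X$ on which it acts cocompactly by translations. Thus $F$ is a periodic flat, and it suffices to prove that $X$ contains no periodic flat; equivalently, to derive a contradiction from the existence of such an $F$ with its cocompact $\bbZ^2$-action.

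The core of the argument should exploit the combinatorial structure of flats established in Section \ref{sec:flats}, together with the Gau\ss--Bonnet theorem and the labelling of dual graphs. The key observation is the asymmetry coming from Lemma \ref{lem:4_cycle}: in any $2\times 2$ grid in $X$, crossing a hyperplane of one type multiplies the exponent of the transverse edge label by $m_i^{n_i}$, so exponents grow geometrically in one direction. Concretely, I would look at the restriction of the edge-labelling of $X^*$ to (a combinatorially convex flat containing) $F$, and track how the labels $a_j^{n}$ evolve along the two coordinate directions of the flat. Following a line of squares in the flat in the ``distorted'' direction forces the integer exponents to blow up (by factors $m_i^{n_i} \geq 2$), and this is incompatible with the flat being preserved by a cocompact $\bbZ^2$-action, which would force the combinatorial pattern — and in particular the sizes of the exponents along a fundamental domain — to be periodic. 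In other words, a translation of the flat realised by an element of $A$ would have to match up squares whose transverse labels differ by an unbounded multiplicative factor, which is impossible.

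I would organise this as follows. First, using Lemma \ref{lem:embedded_diagram} and the description of flats, fix a maximal flat $F$ stabilised by $A$ and describe its dual graph $F^*$ with its induced labelling, noting that each of the two families of parallel hyperplanes of $F$ carries types from a pair $\{i, i+2\}$ (this is exactly the shape of the flats built in Section \ref{sec:flats}). Second, prove an unboundedness statement: along any infinite ray of squares in $F$ crossing hyperplanes of alternating types $i$ and $i+2$ with all relevant exponents of a fixed sign, the absolute values of the transverse edge-labels tend to infinity; this follows by induction from the computation in Lemma \ref{lem:4_cycle} (the exponent gets multiplied by $m_i^{|n|} \geq 2$ at each crossing of the ``wrong'' type). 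One has to first check that in a flat all these exponents can be taken of a consistent sign — this is where Lemma \ref{lem:extension} and the positivity propagated in the construction of Section \ref{sec:flats} enter, showing the flat looks, up to the $H$-action and reflections, like one of the flats $f(Y)$ constructed there. Third, derive the contradiction: $A$ acts cocompactly on $F$, so some non-trivial $\alpha \in A$ translates $F$ along a direction transverse to one of the hyperplane families; $\alpha$ acts on $X^*$ preserving labels, so it would identify two squares of $F$ whose transverse labels are $a_i^{n}$ and $a_i^{n'}$ with $|n| \neq |n'|$ (since the exponents are unbounded and strictly monotone along that direction), which is absurd.

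The main obstacle I expect is the second step — pinning down the combinatorial model of an arbitrary periodic flat precisely enough to run the ``geometric growth of exponents'' argument. The flats constructed in Section \ref{sec:flats} are built with a free choice of positive-integer sequences and with a specific sign convention, but an a priori flat stabilised by $\bbZ^2$ need not obviously be of that form; one must show that after applying an element of $H$ and possibly reflecting, any flat's dual-graph labelling has the monotone/positive structure needed, using the rigidity in Lemma \ref{lem:4_cycle} (a $4$-cycle label is completely determined by $i$, $n_i$, $n_{i+1}$) to propagate sign and growth information across the whole flat. Once the flat is known to have this normal form, the contradiction with cocompactness is essentially a counting/unboundedness argument and should be routine.
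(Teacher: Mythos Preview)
Your setup matches the paper's exactly: rule out elliptic elements via Corollary~\ref{cor:Tits_elliptic}, apply the Flat Torus Theorem, and reduce to showing no flat is periodic. The core insight is also the same --- cocompactness forces the labels $|n_{C,C'}|$ on the flat to be bounded, while the $BS(1,m)$ relation encoded in Lemma~\ref{lem:4_cycle} forces them to grow --- so your proposal is correct in spirit.

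The execution, however, is more laboured than necessary, and the paper's argument neatly avoids the very obstacle you flag. You propose to (i) put the periodic flat into the ``normal form'' of the flats constructed in Section~\ref{sec:flats}, (ii) establish monotone unbounded growth of transverse labels along a ray, and (iii) contradict label-preservation under a translation in $A$. Step (i) is the delicate part, as you note. The paper sidesteps it entirely with a purely local argument: once you know the $\bbZ^2$-action on the flat is cocompact, the labels $|n_{C,C'}|$ take only finitely many values, so pick an edge $e = C \cap C'$ realising the maximum $|n_{C,C'}|$. Looking at the four squares around the vertex $v$ of $e$ where $G_e \hookrightarrow G_v$ is undistorted, the relation from Lemma~\ref{lem:4_cycle} (Britton's Lemma in $BS(1,m)$) forces one of the adjacent transverse labels to be divisible by $m^{|n_{C,C'}|} \geq 2^{|n_{C,C'}|} > |n_{C,C'}|$, contradicting maximality. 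No global structure of the flat is needed --- just one $2\times 2$ grid.

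Two smaller points. First, the Flat Torus Theorem gives an isometrically embedded $\bbR^2$, not a priori a subcomplex; the paper spends a paragraph showing $F$ equals the subcomplex $X_F$ it generates (so that the dual-graph labelling makes sense), and you should not skip this. Second, your appeal to Lemma~\ref{lem:extension} and the positivity propagation in Section~\ref{sec:flats} to argue that an \emph{arbitrary} flat has consistent signs is not obviously justified --- those lemmas are used there to \emph{construct} flats, not to classify them --- so if you pursued your route you would need an independent argument for this. The maximum-label trick makes all of this unnecessary.
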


\begin{proof}
 By contradiction, let us assume that such a subgroup $G$ exists. First notice that such a subgroup is then necessarily purely loxodromic. Indeed, if $G$ were to contain a non-trivial elliptic element, then Corollary \ref{cor:Tits_elliptic} would imply that $G$ either fixes a vertex of $X$, contradicting the fact that  solvable Baumslag--Solitar groups do not contain free abelian subgroups of rank $2$, or it contains a non-abelian free subgroup, which is absurd.

 Thus, $G$ acts freely by semi-simple isometries on $X$. As $X$ is a CAT(0) square complex, hence has finitely many isometry types of faces, the action is also metrically proper by \cite[Proposition]{BridsonSemiSimple}. The Flat Torus Theorem \cite[Theorem II.7.1]{BridsonHaefliger} thus implies that $X$ contains an isometrically embedded flat $F$ of dimension $2$ on which $G$ acts cocompactly. 
 
 Let $X_F$ be the subcomplex of $X$ generated by $F$, that is, the reunion of all the cubes of $X$ which contain a point of $F$ in their interior. We claim that $F=X_F$. The inclusion $F \subset X_F$ is obvious. If a square $C$ of $X$ contains a point $x\in F$ in its interior, then it also contains a small open ball $B \subset F$ as the interior of $C$ is an open subset of $X$. Take a point $y \in C$, draw the unique geodesic segment between $x$ and $y$, and choose a point $z$ of $B$ on that segment. Let $\bar{x}$ and $\bar{z}$ be the preimages of $x$ and $z$ respectively under the embedding $\bbR^2 \hra X$ with image $F$. Then the geodesic ray starting at $\bar{x}$ and going through $\bar{z}$ maps to a geodesic ray of $X$ containing $x$ and $z$, and thus $y$. Thus $C \subset F$. Let $e$ be an edge of $X$ containing a point $x \in F$ in its interior. Then $e$ contains another point $y \neq x$ of $F$, for otherwise $x$ would be a local cut-point of $F$, which is absurd since $F$ is homeomorphic to $\bbR^2$. The same argument as above now shows that $e \subset F$, which implies that $X_F \subset F$. 
 
 Thus, $X_F$ is a subcomplex of $X$ isometric to $\bbR^2$, in particular it is isomorphic to the standard square tiling of $\bbR^2$. Moreover, $G$ acts cocompactly on that subcomplex.

 Since $G$ acts cocompactly on $X_F$, it follows that the integers
 $ n_{C,C'}$, for adjacent squares $C, C'$ of $X_F$, take only finitely many values. Let $C, C'$ be two adjacent squares of $X_F$ such that $|n_{C,C'}|$ is maximal. Choose the unique vertex $v$ of $e:= C \cap C'$ such that the inclusion $G_e \hra G_v$ is undistorted. Denote the four squares of $X_F$ around $v$ in a cyclic way by $C_1, \ldots, C_4$, in such a way that $e=C_1 \cap C_2$. We have $a_{C_1,C_2}=a_{C_3,C_4}$ and $a_{C_2,C_3}=a_{C_4,C_1}$, and the cyclic product of the $a_{C_i,C_{i+1}}^{n_{C_i,C_{i+1}}}$ is trivial in $G_v$. Moreover, since $a_{C_1,C_2}$ is the stable letter in the standard presentation  associated to $a_{C_1,C_2}$ and $a_{C_2,C_3}$ of the Baumslag--Solitar group $G_v \cong BS(1,m)$ (for some $m \geq 2$), it follows from the normal forms in $BS(1,m)$ (seen as the amalgamated product $\langle a_{C_2,C_3}\rangle *_{\langle a_{C_1,C_2}\rangle}$) that $n_{C_1,C_2}=-n_{C_3,C_4}$, and thus at least one of $n_{C_2,C_3}$ or $n_{C_1,C_4}$ is divisible by $m^{|n_{C,C'}|}$, contradicting the maximality of $|n_{C,C'}|$.
\end{proof}

We are now ready to finish the proof of Theorem \ref{thm:Tits}. 

\begin{proof}[Proof of Theorem \ref{thm:Tits}]
 Let $G$ be a non-cyclic subgroup of $H$. If $G$ contains a non-trivial elliptic isometry, then Corollary \ref{cor:Tits_elliptic} implies that $G$ either acts trivially on $X$ or contains a non-abelian free subgroup.  If $G$ is purely loxodromic, then $G$ acts freely on $X$, and it follows from a theorem of Sageev--Wise \cite[Corollary 1.2]{SageevWiseTits} that $G$ is either virtually abelian or contains a non-abelian free subgroup. As $H$ does not contain a copy of $\bbZ^2$ by Proposition \ref{prop:no_Z2}, it follows that $G$ contains a non-abelian free subgroup, which concludes the proof.
\end{proof}

\bibliographystyle{plain}
\bibliography{Higman_Cubical}

\Address

\end{document}